\let\oldcite\cite                                  
\newtheorem{thm}{Theorem}[section]
\newtheorem{cor}[thm]{Corollary}
\newtheorem{lem}[thm]{Lemma}
\newtheorem{prop}[thm]{Proposition}
\theoremstyle{definition}
\theoremstyle{remark}
\newtheorem{rem}[thm]{Remark}
\numberwithin{equation}{section} \theoremstyle{remark}
\newcommand{\bbX}{\mathbb{X}}
\newcommand{\bbY}{\mathbb{Y}}
\newcommand{\bbZ}{\mathbb{Z}}
\newcommand{\bbE}{\mathbb{E}}
\newcommand{\bbF}{\mathbb{F}}
\newcommand{\bbH}{\mathbb{H}}
\newcommand{\C}{\mathsf{C}}
\newcommand{\B}{\mathsf{B}}
\newcommand{\A}{\mathsf{A}}
\newcommand{\sfK}{\mathsf{K}}
\newcommand{\sfF}{\mathsf{F}}
\newcommand{\sfFF}{\mathsf{FF}}
\newcommand{\sfT}{\mathsf{T}}
\newcommand{\sfZT}{\mathsf{ZT}}
\newcommand{\bfX}{\mathbf{X}}
\newcommand{\bfY}{\mathbf{Y}}
\newcommand{\bfZ}{\mathbf{Z}}
\newcommand{\bfE}{\mathbf{E}}
\newcommand{\bfF}{\mathbf{F}}
\newcommand{\mfG}{\mathfrak{G}}
\newcommand{\mcH}{\mathcal{H}}
\newcommand{\mcS}{\mathcal{S}}
\newcommand{\mcT}{\mathcal{T}}
\newcommand{\mcF}{\mathcal{F}}
\newcommand{\mcD}{\mathcal{D}}
\newcommand{\sis}{\mcS_{sis}}
\newcommand{\qis}{\mcS_{qis}}
\newcommand{\im}{\operatorname{im}}
\newcommand{\coker}{\operatorname{coker}}
\newcommand{\Ob}{\operatorname{Ob}}
\newcommand{\Hom}{\operatorname{Hom}}
\newcommand{\homf}{\mathfrak{H}om}
\newcommand{\homfh}{\mathfrak{H}om}         
\newcommand{\homfone}{\mathcal{M}}
\newcommand{\RHom}{\mathfrak{RH}om}
\newcommand{\Totfrak}{\mathfrak{Tot}}
\newcommand{\Ch}{\mathsf{Ch}}
\newcommand{\Chfrak}{\mathfrak{Ch}}
\newcommand{\Chst}{\mathsf{Ch}^{st}}
\newcommand{\Chstfrak}{\mathfrak{Ch}^{st}}
\newcommand{\Dec}{\mathsf{Dec}}
\newcommand{\Decfrak}{\mathfrak{Dec}}
\newcommand{\KDec}{\mathsf{KDec}}
\newcommand{\Tot}{\operatorname{Tot}}
\newcommand{\Cyl}{\operatorname{Cyl}}
\newcommand{\Cone}{\operatorname{Cone}}
\newcommand{\id}{\operatorname{id}}
\newcommand{\pr}{\operatorname{pr}}
\newcommand{\inc}{\operatorname{in}}
\newcommand{\taudl}[1]{\tau_{\leq #1}}
\newcommand{\taudg}[1]{\tau_{\geq #1}}
\def\smashedlongrightarrow{\setbox0=\hbox{$\longrightarrow$}\ht0=1pt\box0}
\def\risom{\buildrel\sim\over{\smashedlongrightarrow}}
\newcommand{\lra}{\longrightarrow}
\newcommand{\llra}[1]{\stackrel{#1}{\lra}}
\newcommand{\torel}[1]{\stackrel{#1}{\to}}
\newcommand{\Eb}{E^{\bullet}}
\newcommand{\Fb}{F^{\bullet}}
\newcommand{\Mb}{M^{\bullet}}
\newcommand{\Nb}{N^{\bullet}}
\newcommand{\oux}[2]{\underset{#1}{\overset{#2}\oplus}}
\begin{document}

\title{Explicit HRS-tilting}%

\author{Behrang Noohi}
\address{Florida State University, Department of Mathematics,
Tallahassee, Florida 32306-4510, USA}
\email{behrang@alum.mit.edu}
\urladdr{http://www.math.fsu.edu/~noohi/}

 \maketitle

\begin{abstract}
  For an abelian category $\A$ equipped with a torsion pair,
  we give an explicit description for the tilted abelian category $\B$
  introduced in \oldcite{HaReSm}, and also for the categories $\Ch(\B)$
  and $\mcD(\B)$. We also describe the DG structure on $\Ch(\B)$.
  As a consequence, we find new proofs of certain results of
  [ibid.]. The main ingredient is the category of  {\em
  decorated} complexes.
\end{abstract}

\section{Introduction}{\label{S:intro}}

Tilting theory originated from representation theory of (finite
dimensional) algebras and their derived categories (see for instance
\cite{BeGePa,BrBu} for the origins of the theory). It was
essentially conceived as a machinery to compare derived categories
of various algebras. The theory has developed substantially in the
past three decades thanks to the works of various authors such as
Auslander, Happel, Keller, Krause, Reiten, Rickard, Ringel,\dots.

 Nowadays techniques of tilting theory have
found applications in (derived) geometry of varieties,
noncommutative geometry, representation theory (of finite groups,
algebraic groups, quantum groups, quivers, \dots), cluster algebras,
and so on.

A precursor to the introduction of the tilting techniques in
geometry is the work of Beilinson relating the derived category of
coherent sheaves on a projective space to the derived category of a
certain finite dimensional noncommutative algebra \cite{Beilinson}.
This was further developed by Bondal \cite{Bondal} and has now
become a standard tool in the study of derived categories of
varieties.

Tilting theory is also closely related to Bridgeland's theory of
stability conditions \cite{Br}. Let us say a few words on this.
Given a stability condition $(Z, \mathcal{P})$ on a triangulated
category $\mathcal{D}$, the slicing $\mathcal{P}$ gives rise to
abelian categories
$\A_{\theta}:=\mathcal{P}\big((\theta,\theta+1]\big)$ inside
$\mathcal{D}$. It is easy to see that for $0<\theta\leq 1$,
$\A_{\theta}$ is obtained by tilting $\A_0$ with respect to the
torsion pair $(\mathcal{T}_{\theta},\mathcal{F}_{\theta})$, where
$\mathcal{F}_{\theta}:=\mathcal{P}\big((0,\theta]\big)$ and
$\mathcal{T}_{\theta}:=\mathcal{P}\big((\theta,1]\big)$. This
observation has interesting implications in noncommutative geometry.
For example, Polishchuk \cite{Po1,Po2,PoSch} shows that if we apply
this to the derived category $\mathcal{D}(T)$ of coherent sheaves on
a complex torus $T$, with the stability condition being the one
coming from the Harder-Narasimhan filtration, the tilted abelian
category $\A_{\theta}$ will be equivalent to the category of
coherent sheaves on the noncommutative torus $T_{\theta}$.

For more on the relation between tilting theory and stability
conditions on varieties the reader can consult works of Bridgeland
and references therein (e.g., \cite{Br2}).
 An application of tilting theory in noncommutative algebraic
geometry appears in \cite{vdB}.  Applications to perverse sheaves
and representation theory of Lie and quantum groups can be found in
various articles by (one or more) of the authors Beilinson,
Bezrukavnikov, Mirkovic, \dots.

One of the main tools in the works alluded above is the construction
of the `tilting'  of an abelian category with respect to a {\em
torsion pair} \cite{HaReSm}. In [ibid.] the authors associate to an
abelian category $\A$ equipped with a torsion pair $(\mcT,\mcF)$ a
new abelian category $\B$ (which is in turn equipped with its own
torsion pair $(\mcT',\mcF')$). This is the `HRS-tilting' of $\A$.

The construction of $\B$ in [ibid.] is indirect and is carried out
by taking the heart of a certain $t$-structure (associated to the
torsion pair) on the derived category of $\A$. In these notes we
give an alternative construction for $\B$ that is more explicit and
reveals more of the structure of $\B$, as we explain shortly. We
expect this new description to be suitable for geometric
applications, as the explicit nature of $\B$ lends itself well to
geometric manipulations (say when working with bundles over a
variety). It could very well give a new insight to the category
theoretic properties (say, existence of generators, chain
conditions, limits and colomits, etc.) of $\B$ as well.

The main input in  this work is an alternative description of
morphisms in the derived category $\mcD(\A)$ between complexes
concentrated in degrees $[-1,0]$; see (\cite{Maps}, Section 9) and
$\S$\ref{SS:definition}. We exploit this  to give an explicit
description of the category $\Ch(\B)$ of chain complexes in $\B$,
its DG structure (Sections \ref{S:DG} and \ref{S:DGEquiv},
especially, Theorem \ref{T:derequiv}), and its derived category
(Theorem \ref{T:derived}). This is achieved via what we call a {\em
decorated complex}, which might be a notion of independent interest;
see Section \ref{S:decorated}. The correspondence between the
homological algebra of $\B$ and that of decorated complexes in $\A$
is established via a functor $\Tot$ which should be thought of as a
``twisted'' total complexes functor.

Although it is not the main purpose of the paper,  we also show how
our approach leads to new proofs for some of the main results of
Happel-Reiten-Smal\o; see Theorem \ref{T:HRS} and Theorem
\ref{T:HRS2}.

\medskip

\noindent{\bf Outline of the main results}

\medskip

Let $(\mcT,\mcF)$ be a torsion pair on an abelian category $\A$. We
begin by observing that, by results of \cite{Maps}, the abelian
category obtained by performing HRS-tilting on this torsion pair is
equivalent to the following category:
\begin{itemize}
  \item[$\diamond$] $\Ob(\B)=\{ \bbX=[X^{-1}\torel{d}X^0] \ | \ \ker d \in \mcF,
  \coker d \in \mcT\}$.

  \item[$\diamond$] $\Hom_{\B}(\bbX,\bbY)=$ isomorphism classes of
    commutative diagrams
     $$\xymatrix@C=8pt@R=6pt@M=6pt{ X^{-1} \ar[rd]^{\kappa} \ar[dd]_{d}
                            & & Y^{-1} \ar[ld]_{\iota} \ar[dd]^{d} \\
                  & E \ar[ld]^{\sigma} \ar[rd]_{\rho}  & \\
                  X^0 & & Y^0       }$$
    such that the diagonal maps compose to zero and the NE-SW sequence
    is short exact.
\end{itemize}

We use this description to get explicit information about $\B$. For
instance, the kernel and cokernel of a `butterfly' diagram $P$ as
above are given by
  $$\ker P:= [X^{-1} \torel{\kappa} A],$$
  $$\coker P :=  [E/A  \torel{\rho} Y^0].$$
Here, $A$ is the (unique) subobject of $E$ sitting between
$\im\kappa$ and $\ker\rho$ such that $A/\im(\kappa) \in \mcT$  and
$\ker(\rho)/A \in \mcF$. Using this we find a description of
complexes in $\B$; see $\S$\ref{SS:complexes1}.

We then exploit these results to give a description of the derived
category of $\B$ in terms of {\em decorated complexes}
($\S$\ref{S:decorated}). A decorated complex in $\A$ consists of a
complex $E^{\bullet}$ in $A$, together with a collection of
subobjects $M^n \subseteq C^n$, for every $n$. The differentials of
$E^{\bullet}$ are not required to respect the subobjects. A morphism
 $(E^{\bullet},M^{\bullet}) \to
(F^{\bullet},N^{\bullet})$ of decorated complexes  is, by
definition, a chain map $f \: E^{\bullet} \to F^{\bullet}$ which
respects the subobjects. We say that such a morphism is a
quasi-isomorphism if $f$ is so.

A decorated complex $(E^{\bullet},M^{\bullet})$ is said to be
compatible with a torsion pair $(\mcT,\mcF)$ if
\begin{itemize}
  \item[$\blacktriangleright$] \ $M^n\cap\partial^{-1}M^{n+1} \in \mcF$
    and $E^n/(M^n+\partial M^{n-1}) \in \mcT$,  for every $n$.
\end{itemize}
The decorated complexes whose decoration is compatible with the
torsion pair form a full subcategory of $\Dec(\A)$ which we denote
by $\Ch(\A,\mcT,\mcF)$. We have the following description of the
derived category $\mcD(\B)$ of $\B$ (see Corollary \ref{C:main}).

\begin{thm}
  There is a natural equivalence of triangulated categories
   $$\Ch(\A,\mcT,\mcF)/_{q-iso} \cong \mcD(\B).$$
  The same this is true for bounded (above, below, both-sided)
  derived categories.
\end{thm}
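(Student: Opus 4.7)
The plan is to bootstrap from the chain-level equivalence $\Tot \colon \Chfrak(\A,\mcT,\mcF) \risom \Chfrak(\B)$ provided by Theorem \ref{T:derequiv}. Forgetting the DG enrichment, $\Tot$ is in particular an equivalence of the underlying ordinary categories $\Ch(\A,\mcT,\mcF) \simeq \Ch(\B)$, so the task reduces to matching the two classes of quasi-isomorphisms: on the $\B$-side, a chain map is a q-iso when it induces isomorphisms on $H^n_{\B}$; on the $\A$-side, when the underlying chain map of complexes in $\A$ is a q-iso. Once this matching is in place, the standard principle that an equivalence of categories carrying one class of morphisms to another (and reflecting it) descends to an equivalence of localizations yields the desired $\Ch(\A,\mcT,\mcF)/_{q-iso} \simeq \mcD(\B)$, and simultaneously transports the triangulated structure from $\mcD(\B)$ back to the left-hand side.

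The key lemma to prove is an explicit computation of $H^n_\B(\Tot(E^\bullet,M^\bullet))$ for a compatible decorated complex. Using the kernel/cokernel formulas in $\B$ recalled in the Outline---where one extracts the intermediate subobject $A$ with $A/\im\kappa\in\mcT$ and $\ker\rho/A\in\mcF$---I expect the $n$-th $\B$-cohomology of $\Tot(E^\bullet,M^\bullet)$ to be representable by a two-term complex whose total object in $\A$ is canonically isomorphic to $H^n(E^\bullet)$, with the compatibility conditions $M^n\cap\partial^{-1}M^{n+1}\in\mcF$ and $E^n/(M^n+\partial M^{n-1})\in\mcT$ ensuring that the distinguished subobject $A$ selects the correct torsion/torsion-free slice of the cohomology. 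In particular, $H^n_\B(\Tot(-))=0$ iff $H^n(E^\bullet)=0$, and applying this to the cone of a morphism (cones being transported across the equivalence) shows that $\Tot$ both preserves and reflects q-isos.

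The main obstacle I anticipate is this cohomology calculation and the identification of the resulting butterfly with a canonical representative of $H^n(E^\bullet)$; the rest of the argument is essentially formal. For the bounded variants, $\Tot$ manifestly preserves and reflects the property of being concentrated in a given range of degrees (a decorated complex vanishes outside a range iff the underlying $E^\bullet$ does, and the analogous statement holds on the $\B$-side), so the same argument restricts verbatim to the bounded-above, bounded-below, and two-sided bounded subcategories.
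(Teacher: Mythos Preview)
Your bootstrap is circular and rests on a misreading of Theorem~\ref{T:derequiv}. That theorem is proved \emph{after} the result you are trying to establish, and its proof goes through Theorem~\ref{T:main} and Corollary~\ref{C:main}, i.e.\ through exactly the statement in question. Independently of the circularity, Theorem~\ref{T:derequiv} carries extra hypotheses (enough injectives/projectives in $\A$ and in $\B$) that the target statement does not, so even as a black box it would not suffice.

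More seriously, the sentence ``Forgetting the DG enrichment, $\Tot$ is in particular an equivalence of the underlying ordinary categories $\Ch(\A,\mcT,\mcF)\simeq\Ch(\B)$'' is false. The functor $\Tot$ is an equivalence $\Chst(\B)\risom\Ch(\A,\mcT,\mcF)$ onto the subcategory of \emph{strict} chain maps (Theorem~\ref{T:main}); it does \emph{not} identify $\Ch(\A,\mcT,\mcF)$ with $\Ch(\B)$. A general morphism of complexes in $\B$ is levelwise a butterfly and need not come from a map of decorated complexes. Likewise, Theorem~\ref{T:derequiv} asserts only a \emph{derived} equivalence of DG categories (hom-complexes become quasi-isomorphic after suitable resolution), not an equivalence of $Z_0$-categories; you cannot forget the DG structure and conclude an ordinary equivalence. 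The paper's route is: establish $\Tot\colon\Chst(\B)\risom\Ch(\A,\mcT,\mcF)$, show it matches the classes $\sis$ and $\qis$ (Corollary~\ref{C:multsys}, via the cohomology computation in Proposition~\ref{P:cohisom} and Corollary~\ref{C:cohomology}), and then invoke Proposition~\ref{P:derived}, which says $\qis^{-1}\Chst(\B)\risom\mcD(\B)$. Your second paragraph---identifying $\bbH^n(\Tot(\Eb,\Mb))$ with a two-term complex controlled by $H^n(\Eb)$---is exactly the right calculation and is essentially Corollary~\ref{C:cohomology}; but it should be plugged into the $\Chst(\B)$ framework, not into a nonexistent equivalence with $\Ch(\B)$.
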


The category $\Dec(\A)$ of decorated complexes in $\A$ behaves very
much like the category $\Ch(\A)$ of chain complexes in that we can
define decorated cylinders, cones, homotopies, and so on. In other
words, we can do homological algebra in $\Dec(\A)$. In particular,
we can talk about the homotopy and the derived categories of
$\Dec(\A)$, and these are both triangulated categories.

In the case where $\A$ is the category of $K$-modules for a ring
$K$, $\Dec(K)$ is a closed monoidal category. More generally, if
$\A$ is $K$-linear, then $\Dec(\A)$ is enriched over $\Dec(K)$. This
way $\Ch(\A,\mcT,\mcF)$ inherits a DG structure from $\Dec(\A)$,
which we denote by $\Chfrak(\A,\mcT,\mcF)$. Also, $\Ch(\B)$ has a DG
structure, which we denote by $\Chfrak^*(\B)$.

The above theorem can now be enhanced to a derived equivalence of
derived categories; see Theorem \ref{T:derequiv}.

\begin{thm}
  Assume that $\A$ has either enough injectives or enough
  projectives. Assume further that $\B$ has enough injectives
  (respectively, enough projectives). Let $*=+,b$ (respectively,
  $*=-,b$.) Then, we have a derived equivalence
  $$\Chfrak^*(\A,\mcT,\mcF)\cong\Chfrak^*(\B)$$
  of DG categories.
\end{thm}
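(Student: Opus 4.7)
The plan is to construct an explicit DG functor $\Tot \: \Chfrak^*(\A,\mcT,\mcF) \to \Chfrak^*(\B)$ whose underlying functor on objects is the twisted total complex functor already in play, and to show that it is a quasi-equivalence of DG categories. The first step is to verify that $\Tot$ extends to a DG functor at all, i.e.\ that the natural map
$$\Hom_{\Chfrak^*(\A,\mcT,\mcF)}(\bbX^{\bullet},\bbY^{\bullet}) \lra \Hom_{\Chfrak^*(\B)}(\Tot\bbX^{\bullet},\Tot\bbY^{\bullet})$$
induced by the $\Dec(K)$-enrichment of $\Dec(\A)$ is a chain map compatible with composition and identities. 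Since the Hom-complex on the left is built using the same twisted total complex recipe as $\Tot$ itself, this should reduce to a direct unwinding of definitions.

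Granted this, the claim of the theorem splits into two parts. Essential surjectivity on $H^0$ (of the induced functor on homotopy categories, after localizing at quasi-isomorphisms) is an immediate consequence of the previous corollary, which identifies the homotopy category with $\mcD^*(\B)$ on both sides. The substantive content is quasi-full-faithfulness: one must show that the displayed map is a quasi-isomorphism. At $H^0$ this again follows from the previous corollary, so what remains is to identify the higher cohomology, which amounts to matching $\mathrm{Ext}$-groups on the two sides.

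For this, the hypothesis that $\B$ has enough injectives (resp.\ projectives), matched with the corresponding hypothesis on $\A$ in the bounded direction $*=+,b$ (resp.\ $*=-,b$), is used to replace $\bbY^{\bullet}$ (resp.\ $\bbX^{\bullet}$) by an h-injective (resp.\ h-projective) resolution so that each Hom-complex computes the derived $\RHom$. Combining this with the triangulated equivalence from the previous corollary, which is compatible with shifts and cones, one concludes that the map on Hom-complexes is a quasi-isomorphism in every degree.

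The hard part will be to verify that $\Tot$ is compatible with the notion of resolution on both sides, that is, that a suitable resolution of $\bbY^{\bullet}$ in $\Chfrak^*(\A,\mcT,\mcF)$ is sent by $\Tot$ to a complex in $\Chfrak^*(\B)$ that still computes the derived Hom against $\Tot\bbX^{\bullet}$. A likely strategy is to reduce to a generating class of objects (for instance, stalks $[X^{-1}\to X^0]$ of butterflies placed in a single cohomological degree), verify the quasi-isomorphism there by direct computation using the butterfly description from $\S$\ref{SS:definition} and the compatibility condition $(\blacktriangleright)$ on decorations, and then propagate to arbitrary objects by the five-lemma along the triangulated structure inherited from $\Dec(\A)$.
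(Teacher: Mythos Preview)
Your proposal misses the paper's central mechanism, and as written has a real gap. The paper does \emph{not} construct a direct DG functor $\Chfrak(\A,\mcT,\mcF)\to\Chfrak(\B)$ and then argue it is a quasi-equivalence via resolutions and five-lemma arguments. Instead it factors everything through the intermediate DG category $\Chstfrak(\B)$ of strict morphisms. Proposition~\ref{P:DGeq} gives a \emph{strict} DG equivalence $\mfG\:\Chfrak(\A,\mcT,\mcF)\risom\Chstfrak(\B)$ (an honest isomorphism on hom-complexes, not merely a quasi-isomorphism). The remaining work is to compare $\Chstfrak(\B)$ with $\Chfrak(\B)$, and this is where the hypothesis on $\A$ is used: enough injectives (resp.\ projectives) in $\A$ produce, for every complex in $\B$, a \emph{strictly isomorphic} complex of semi-injective (resp.\ semi-projective) objects (Corollary~\ref{C:resolutions}), and for such complexes the inclusion $\homf_{\Chstfrak(\B)}\hookrightarrow\homf_{\Chfrak(\B)}$ is an isomorphism (Proposition~\ref{P:injproj}, via Lemma~\ref{L:split}). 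This yields Proposition~\ref{P:dereq}: a derived equivalence with respect to the class $\sis$. Only then is the hypothesis on $\B$ invoked, to upgrade from $\sis$ to $\qis$.

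Your plan, by contrast, never isolates the role of the hypothesis on $\A$; you fold it together with the hypothesis on $\B$ as if both were used to build h-injective/h-projective resolutions in the same category. That is not what is happening, and without the semi-injective/semi-projective replacement step you have no tool to compare the strict hom-complex (which is what $\mfG$ actually hits) with the full hom-complex in $\Chfrak(\B)$. Your proposed reduction to stalk objects and a five-lemma propagation would, at best, rederive this comparison the hard way; but you would still need to identify which objects make the comparison an isomorphism on the nose, and that is exactly the content of Lemma~\ref{L:split} and Proposition~\ref{P:injproj}. I would recast your argument around $\Chstfrak(\B)$ and the two separate localizations $\sis\subset\qis$.
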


Finally, let us remark that, in view of the above results, the
functor $D(\B)\to D(\A)$ (and the DG functor $\Chfrak(\B) \to
\Chfrak(\A)$) studied in \cite{HaReSm} is nothing but the forgetful
functor
 $$(E^{\bullet},M^{\bullet}) \mapsto E^{\bullet}$$
that forgets the decoration. In the case where the torsion pair is
tilting or cotilting this is known to be an (derived) equivalence;
see Theorem \ref{T:HRS}.


 \tableofcontents

\section{A quick review of torsion theories}{\label{S:Torsion}}

Let $\A$ be an abelian category.
 A {\bf torsion theory} in $\A$ is a
pair $(\mcT,\mcF)$ of full additive subcategories of $\A$ such that:

\vspace{0.1in}

\begin{itemize}
  \item[$\blacktriangleright$]  For every $T \in \mcT$ and $F\in \mcF$,
   we have $\Hom(T,F)=0$.
   \vspace{0.1in}
  \item[$\blacktriangleright$] For every $A \in \A$, there is a
   (necessarily unique) exact sequence
    $$0 \to T \to A \to F \to 0, \ \ \ \ T\in \mcT,\ F\in\mcF.$$
\end{itemize}

The following facts are well-known and easy to prove.

\begin{lem}{\label{L:torsion1}}
  For a torsion theory $(\mcT,\mcF)$ we have $\mcT^{\bot}=\mcF$ and
  $^{\bot}\mcF=\mcT$, that is
   $$\mcF=\{F\in \A \ | \ \forall T\in \mcT, \ \Hom(T,F)=0\},$$
   $$\mcT=\{T\in \A \ | \ \forall F\in \mcF, \ \Hom(T,F)=0\}.$$

\end{lem}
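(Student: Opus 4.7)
The plan is to use the defining properties of a torsion theory directly: property $(1)$ on vanishing of $\Hom$ and property $(2)$ on the existence of the canonical exact sequence. Property $(1)$ already gives the easy inclusions $\mcF\subseteq\mcT^{\bot}$ and $\mcT\subseteq{}^{\bot}\mcF$, so the content is in the reverse inclusions.

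For $\mcT^{\bot}\subseteq\mcF$, I would take an object $F\in\A$ with $\Hom(T,F)=0$ for all $T\in\mcT$, apply property $(2)$ to get a short exact sequence
$$0\to T \to F \to F' \to 0$$
with $T\in\mcT$ and $F'\in\mcF$, and observe that the monomorphism $T\hookrightarrow F$ lies in $\Hom(T,F)=0$, hence $T=0$. This forces $F\cong F'\in\mcF$. Symmetrically, for ${}^{\bot}\mcF\subseteq\mcT$, take $T\in\A$ with $\Hom(T,F)=0$ for all $F\in\mcF$, apply property $(2)$ to get
$$0\to T'\to T\to F\to 0$$
with $T'\in\mcT$ and $F\in\mcF$, note that the epimorphism $T\twoheadrightarrow F$ lies in $\Hom(T,F)=0$, so $F=0$ and $T\cong T'\in\mcT$.

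I do not expect a genuine obstacle here; the statement is essentially a reformulation of the definition once the canonical sequence is in hand. The only mild subtlety is remembering to use that the maps in the canonical sequence are a monomorphism and an epimorphism respectively, so that vanishing of $\Hom$ forces the relevant end of the sequence to be zero. No limit, colimit, or derived-category machinery from the rest of the paper is needed for this lemma.
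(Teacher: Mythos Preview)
Your argument is correct and is exactly the standard proof. The paper does not actually give a proof of this lemma; it simply declares the statement ``well-known and easy to prove'' and moves on, so there is nothing further to compare.
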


\begin{lem}{\label{L:torsion2}}
  If $X \to Y$ is a monomorphism and $Y$ is in $\mcF$, then $X$ is
  in $\mcF$. If $Y \to Z$ is an epimorphism and $Y$ is in $\mcT$,
  then $Z$ is in $\mcT$.
\end{lem}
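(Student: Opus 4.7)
The plan is to deduce both statements directly from Lemma~\ref{L:torsion1}, which rewrites membership in $\mcF$ and $\mcT$ as a $\Hom$-orthogonality condition. This reduces each claim to chasing a single morphism through the given mono or epi.

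For the first statement, given a monomorphism $X \hookrightarrow Y$ with $Y \in \mcF$, I would show $X \in \mcF$ by verifying that $\Hom(T,X)=0$ for every $T \in \mcT$. Pick any $f \: T \to X$ and compose with the inclusion into $Y$; the resulting map $T \to Y$ must vanish because $Y\in\mcF$ and $T\in\mcT$. Since $X\to Y$ is a monomorphism, $f$ itself is zero, and the conclusion follows from Lemma~\ref{L:torsion1}.

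The second statement is the dual argument: given an epimorphism $Y \twoheadrightarrow Z$ with $Y \in \mcT$, I would show $\Hom(Z,F)=0$ for every $F \in \mcF$. For any $g \: Z \to F$, the composition $Y \to Z \to F$ is zero by torsion-orthogonality, and since $Y \to Z$ is epi, $g$ must be zero. Again Lemma~\ref{L:torsion1} gives $Z \in \mcT$.

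There is no real obstacle here; the only thing to be careful about is that we are invoking the two orthogonality descriptions of $\mcF$ and $\mcT$ in opposite directions for the two halves of the lemma, which is why the mono/epi hypothesis enters symmetrically. Neither additivity closure nor the existence of the torsion short exact sequence is needed beyond what is packaged into Lemma~\ref{L:torsion1}.
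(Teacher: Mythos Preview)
Your argument is correct and is exactly the standard one. The paper does not actually prove this lemma---it just declares the statement ``well-known and easy to prove''---so your proof via the orthogonality characterizations in Lemma~\ref{L:torsion1} is precisely what the reader is expected to supply.
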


Remark that it is not true in general that a subobject of an object
$Y \in \mcT$ is  in $\mcT$. Similarly, it is not true in general
that a quotient of an object $Y \in \mcF$ is in $\mcF$.

\begin{lem}{\label{L:torsion3}}
  Consider the exact sequence
    $$0\to X \to Y \to Z \to 0$$
  in $\A$. If $X$ and $Z$ are both in $\mcT$ (respectively, in $\mcF$),
  then so is $Y$.
\end{lem}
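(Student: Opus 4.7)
The plan is to deduce the lemma directly from the Hom-orthogonality characterization in Lemma~\ref{L:torsion1}: an object lies in $\mcF$ iff it receives no nonzero map from any object of $\mcT$, and dually for $\mcT$. Thus, instead of constructing a torsion decomposition of $Y$, I would test $Y$ against every object of the opposite class and show all Hom's vanish.

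First I would treat the case $X,Z \in \mcF$. Given any $T \in \mcT$ and any morphism $f \: T \to Y$, the composite $T \to Y \to Z$ lands in $Z \in \mcF$, so by the defining property of a torsion pair it is zero. Hence $f$ factors through the kernel of $Y \to Z$, which is $X$; that is, there is $\tilde f \: T \to X$ with $f$ equal to the composition $T \to X \hookrightarrow Y$. But $X \in \mcF$ and $T \in \mcT$, so $\tilde f = 0$, whence $f = 0$. Since $T$ was arbitrary, $\Hom(T,Y)=0$ for all $T \in \mcT$, and by Lemma~\ref{L:torsion1} we conclude $Y \in \mcF$.

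The case $X,Z \in \mcT$ is dual. For any $F \in \mcF$ and any $g \: Y \to F$, the composite $X \to Y \to F$ is zero because $X \in \mcT$ and $F \in \mcF$. Hence $g$ factors through the cokernel of $X \to Y$, which is $Z$, giving $\tilde g \: Z \to F$ with $g = \tilde g \circ (Y \to Z)$. Since $Z \in \mcT$ and $F \in \mcF$, we get $\tilde g = 0$ and hence $g = 0$. Thus $\Hom(Y,F)=0$ for all $F \in \mcF$, and Lemma~\ref{L:torsion1} gives $Y \in \mcT$.

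There is no real obstacle here: both arguments are one-line diagram chases using the universal property of kernels/cokernels in the abelian category $\A$, and the only input beyond abelianness is Lemma~\ref{L:torsion1}. The only point to be careful about is invoking Lemma~\ref{L:torsion1} in the correct direction in each case, i.e.\ recognizing that membership in $\mcT$ or $\mcF$ is entirely detected by Hom-vanishing against the opposite class.
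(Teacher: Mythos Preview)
Your argument is correct. The paper does not actually supply a proof of Lemma~\ref{L:torsion3}: it simply records it, together with Lemmas~\ref{L:torsion1} and~\ref{L:torsion2}, as ``well-known and easy to prove.'' Your proof via the $\Hom$-orthogonality characterization of Lemma~\ref{L:torsion1} is exactly the standard one-line verification the authors have in mind, so there is nothing to compare.
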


\section{The category $\B$}{\label{S:B}}

 Let $\A$ be an abelian category and $(\mcT,\mcF)$
 a torsion pair in $\A$. To this data we associate a
 new abelian category $\B$ and a
torsion pair $(\mcT',\mcF')$. 
By results of (\cite{Maps}, Section 9) this  category is naturally
equivalent to the one defined in \cite{HaReSm}.

\subsection{Definition of $\B$}{\label{SS:definition}}

The category $\B$ is defined as follows:

\begin{itemize}
  \item[$\diamond$] $\Ob(\B)=\{ \bbX=[X^{-1}\torel{d}X^0] \ | \ \ker d \in \mcF,
  \coker d \in \mcT\}$. We will usually drop $d$ from the notation.

  \item[$\diamond$] $\Hom_{\B}(\bbX,\bbY)=$ isomorphism classes of
    commutative diagrams
     $$\xymatrix@C=8pt@R=6pt@M=6pt{ X^{-1} \ar[rd]^{\kappa} \ar[dd]_{d}
                            & & Y^{-1} \ar[ld]_{\iota} \ar[dd]^{d} \\
                  & E \ar[ld]^{\sigma} \ar[rd]_{\rho}  & \\
                  X^0 & & Y^0       }$$
    such that the diagonal maps compose to zero and the NE-SW sequence
    is short exact.

\end{itemize}

\begin{rem}
 It follows from the axioms of a torsion pair that, given objects 
 $\bbX$ and $\bbY$ in $\B$
 and two diagrams $E$ and $E'$ as above, there exists at
 most one isomorphism between $E \to E'$ commuting with all the 
 four arrows of the two diagrams.
 Therefore, by passing to isomorphism classes of such
 diagrams we do not loose any information.
\end{rem}

A morphism that comes from  an actual morphism of complexes $f\:
\bbX \to \bbY$ in $\Ch(\A)$ corresponds to the diagram
        $$\xymatrix@C=8pt@R=6pt@M=6pt{ X^{-1} \ar[rd]^{(d,-f^{-1})}  \ar[dd]_{d}
                            & & Y^{-1} \ar[ld]  \ar[dd]^{d} \\
                  &  X^0\oplus Y^{-1} \ar[ld]  \ar[rd]_{f^0+d}   & \\
                  X^0 & & Y^0       }$$
 For simplicity, we denote such morphisms in the usual way
    $$\xymatrix@C=8pt@R=6pt@M=6pt{ X^{-1}  \ar[rr]^{f^{-1}}  \ar[dd]_{d}
                            & & Y^{-1}   \ar[dd]^{d} \\
                  &          & \\
                  X^0 \ar[rr]_{f^0} & & Y^0       }$$
and call them {\bf strict} morphisms. Equivalently, a strict
morphism in $\B$ is one for which the NE-SW sequence splits.

\begin{lem}{\label{L:split}}
  If $\bbX$ is such that  $X^0$ is projective,
  then every morphism coming out of $\bbX$ is strict.
  If $\bbY$ is such that  $Y^{-1}$ is injective,
  then every morphism to $\bbY$ is strict.
\end{lem}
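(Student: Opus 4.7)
The plan is to use directly the characterization already recorded in the paper: a morphism in $\B$ is strict if and only if the NE-SW short exact sequence
$$0 \ra Y^{-1} \torel{\iota} E \torel{\sigma} X^0 \ra 0$$
splits. Once this equivalence is invoked, each of the two assertions becomes a one-line application of the defining universal property of projective / injective objects in $\A$.

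For the first assertion, I would argue as follows. Fix any morphism $\bbX \to \bbY$ in $\B$ and pick a representing butterfly diagram with middle object $E$. The map $\sigma\: E \to X^0$ is an epimorphism (it is the cokernel of $\iota$ in the NE-SW sequence). If $X^0$ is projective, then $\id_{X^0}$ lifts along $\sigma$ to a section $s\: X^0 \to E$, hence the NE-SW sequence splits and the morphism is strict by the equivalent characterization stated just before the lemma.

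For the second assertion the argument is dual: $\iota\: Y^{-1} \hookrightarrow E$ is a monomorphism, so if $Y^{-1}$ is injective then $\id_{Y^{-1}}$ extends along $\iota$ to a retraction $r\: E \to Y^{-1}$. Again the NE-SW sequence splits and the morphism is strict. If one wants to be fully explicit, the section $s$ (respectively the retraction $r$) produces an isomorphism $E \cong X^0 \oplus Y^{-1}$ between the given butterfly and the one displayed in the excerpt with middle term $X^0\oplus Y^{-1}$; under this isomorphism $\kappa$ acquires the form $(d,-f^{-1})$ and $\rho$ the form $(f^0,d)$ because of the commutativity relations $\sigma\kappa = d$ and $\rho\iota = d$, yielding the chain map $f=(f^{-1},f^0)$.

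There is no real obstacle here; all the content has been packaged into the equivalence ``strict $\Leftrightarrow$ NE-SW splits,'' and the lemma is essentially just an observation. Its usefulness is as a reduction tool later on, allowing one to replace arbitrary morphisms in $\B$ by ordinary chain maps whenever $\A$ has enough projectives or injectives in the relevant degrees.
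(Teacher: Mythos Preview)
Your argument is correct and is exactly the ``trivial'' verification the paper has in mind: the equivalence ``strict $\Leftrightarrow$ NE-SW sequence splits'' together with the lifting/extension property of projectives/injectives immediately gives the result. The paper's own proof consists of the single word ``Trivial,'' so you have simply spelled out what that word stands for.
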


\begin{proof}
  Trivial.
\end{proof}

\subsection{Composition of morphisms}{\label{SS:compose}}

Given two morphisms

  $$ \xymatrix@C=8pt@R=6pt@M=6pt{ X^{-1} \ar[rd]  \ar[dd]
                            & & Y^{-1} \ar[ld]_{\iota}  \ar[dd]  \\
                  & E \ar[ld]  \ar[rd]_{\rho}  & \\
                  X^0 & & Y^0       }\ \ \ \ \
  \xymatrix@C=8pt@R=6pt@M=6pt{ Y^{-1} \ar[rd]^{\kappa'}  \ar[dd]
                            & & Z^{-1} \ar[ld]  \ar[dd]  \\
                  & F \ar[ld]^{\sigma'}  \ar[rd]   & \\
                  Y^0 & & Z^0       }$$
in $\B$, we define their composition  to be
$$\xymatrix@C=10pt@R=4pt@M=4pt{ X^{-1} \ar[rd]  \ar[dd]
                            & & Z^{-1} \ar[ld]  \ar[dd]  \\
                  & E\oux{Y^0}{Y^{-1}}F \ar[ld]  \ar[rd]   & \\
                  X^0 & & Z^0       }$$
Here, $E\oux{Y^0}{Y^{-1}}F$ is the quotient of the object $L$
consisting of pairs
  $(x,y) \in E\times F$  such that $\rho(x)=\sigma'(y) \in Y^0$,
  modulo the subobject $I=\{\big(\iota(\beta),\kappa'(\beta)\big)
  \in E\times F\ | \ \beta \in Y^{-1}\}$. More precisely, let
  $E\oux{Y^0}{}F$ be the fiber product of $E$ and $F$ over
  $Y^{0}$. Then
    $$E\oux{Y^0}{Y^{-1}}F:=\coker (Y^{-1} \llra{(\iota,\kappa')} E\oux{Y^0}{}F).$$

In the case where one of the morphisms is strict, the composition
takes a simpler form. When the first morphisms is strict, say
          $$\xymatrix@C=8pt@R=6pt@M=6pt{ X^{-1}  \ar[rr]^{f^{-1}}
          \ar[dd]
                            & & Y^{-1}   \ar[dd]  \\
                  &          & \\
                  X^0 \ar[rr]_{f^0} & & Y^0       }$$
 then the composition is
         $$   \xymatrix@C=8pt@R=6pt@M=6pt{ X^{-1} \ar[rd]   \ar[dd]
                            & & Z^{-1} \ar[ld]  \ar[dd]  \\
                  & f^{0,*}(F) \ar[ld]^{f^{0,*}(\sigma')}  \ar[rd]   & \\
                  X^0 & & Z^0       }$$
Here, $f^{0,*}(F)$ stands for the pull back of the extension $F$
along $f^0 \: X^0 \to Y^0$. More precisely,
$f^{0,*}(F)=X^0\oplus_{Y^0}F$ is the fiber product.

When the second morphisms is strict, say
          $$\xymatrix@C=8pt@R=6pt@M=6pt{ Y^{-1}  \ar[rr]^{g^{-1}}
          \ar[dd]
                            & & Z^{-1}   \ar[dd]  \\
                  &          & \\
                  Y^0 \ar[rr]_{g^0} & & Z^0       }$$
 then the composition is
         $$   \xymatrix@C=8pt@R=6pt@M=6pt{ X^{-1} \ar[rd]   \ar[dd]
                            & & Z^{-1} \ar[ld]_{g^{-1}_*(\iota)}  \ar[dd]  \\
                  & g^{-1}_*(E) \ar[ld]  \ar[rd]   & \\
                  X^0 & & Z^0       }$$
Here, $g^{-1}_*(E)$ stands for the push forward of the extension $E$
along $g^{-1} \: Y^{-1} \to Z^{-1}$. More precisely,
$g^{-1}_*(E)=E\oplus^{Y^{-1}} Z^{-1}$ is the push-out.

\subsection{Addition  of morphisms}{\label{SS:addition}}

  Given two elements $P,P' \in \Hom(\bbX,\bbY)$,
       $$\xymatrix@C=8pt@R=6pt@M=6pt{ X^{-1} \ar[rd]^{\kappa}
       \ar[dd]
                            & & Y^{-1} \ar[ld]_{\iota} \ar[dd]  \\
                  & E \ar[ld]^{\sigma} \ar[rd]_{\rho}  & \\
                  X^0 & & Y^0       } \ \   \ \
       \xymatrix@C=8pt@R=6pt@M=6pt{ X^{-1} \ar[rd]^{\kappa'} \ar[dd]
                            & & Y^{-1} \ar[ld]_{\iota'} \ar[dd] \\
                  & E' \ar[ld]^{\sigma'} \ar[rd]_{\rho'}  & \\
                  X^0 & & Y^0       }$$
we define $P+P' \in \Hom(\bbX,\bbY)$ to be
   $$\xymatrix@C=8pt@R=6pt@M=6pt{ X^{-1} \ar[rd]^{(\kappa,\kappa')}
   \ar[dd]
                            & & Y^{-1} \ar[ld]_{(0,\iota)} \ar[dd] \\
                  &
E\oux{X^0}{Y^{-1}}E' \ar[ld]^{\sigma=\sigma'} \ar[rd] _{\rho+\rho'}  & \\
                  X^0 & & Y^0       }$$
where $E\oux{X^0}{Y^{-1}}E'$ is defined as in $\S$\ref{SS:compose},
with the difference that now we mod out  $E\oux{X^0}{}E'$  by the
antidiagonal image of $Y^{-1}$ instead of the diagonal image. The
map in the bottom-left corner, denoted by $\sigma=\sigma'$, sends
$(a,b)$ to $\sigma(a)$ (which, by definition, is equal to
$\sigma'(b)$).

We define $-P$ by
 $$\xymatrix@C=8pt@R=6pt@M=6pt{ X^{-1} \ar[rd]^{\kappa} \ar[dd]_{d}
                            & & Y^{-1} \ar[ld]_{-\iota} \ar[dd]^{d} \\
                  & E \ar[ld]^{\sigma} \ar[rd]_{-\rho}  & \\
                  X^0 & & Y^0       }$$

  When $\A$ is $R$-linear for some commutative ring $R$, then
  $\B$ is also naturally $R$-linear. For $r \in R$  and $P$
  as above, $rP$  is equal to the composition
  of $P$ and the strict morphism $r \cdot - : \bbY \to \bbY$;
  see the end of $\S$\ref{SS:compose} to see what this exactly is.
  In the case where $r$ is a unit, $rP$ is represented by
  $(E,\kappa,r^{-1}\iota,\sigma,r\rho)$.

\subsection{Kernels, cokernels}{\label{SS:kernel}}

Consider $P \in \Hom(\bbX,\bbY)$ given by
   $$\xymatrix@C=8pt@R=6pt@M=6pt{ X^{-1} \ar[rd]^{\kappa} \ar[dd]
                            & & Y^{-1} \ar[ld]_{\iota} \ar[dd]  \\
                  & E \ar[ld]^{\sigma} \ar[rd]_{\rho}  & \\
                  X^0 & & Y^0       }$$
The cone of $P$, where now we consider $P$ as a morphism in the
derived category $\mcD(\A)$, has a natural model, namely, the NW-SE
complex
   $$C(P):=X^{-1} \torel{\kappa} E \torel{\rho} Y^0,$$
in which $Y^0$ is sitting in degree $0$. The corresponding triangle
$$\bbX \to \bbY \to C(P) \to \bbX[1]$$
is defined in the obvious way.

From this we get the following descriptions of the kernel and
cokernel of $P$ in $\B$.

\vspace{0.1in}

\noindent {\bf Kernel.} Let $A=q^{-1}(T)$, where $T \in \mcT$ is the
torsion part of  $H^{-1}(C(P))$ and $q \: \ker\rho \to H^{-1}(C(P))$
is the quotient map. Then, the kernel of $P$ is
     $$\ker P:= [X^{-1} \torel{\kappa} A].$$
The map $\ker P \to \bbX$ is given by $(\id_{X^{-1}},\sigma|_A)$. We
have $H^{-1}(\ker(P))=H^{-2}(C(P))$ and $H^{0}(\ker(P))=T$.

\vspace{0.1in}

\noindent {\bf Cokernel.} 
 The cokernel of $P$ is
      $$\coker P :=  [E/A  \torel{\rho} Y^0].$$
The map $\bbY \to \coker P$ is given by $(\iota,\id_{Y^0})$. We have
$H^{-1}(\coker(P))=F$, the free part of  $H^{-1}(C(P))$, and
$H^{0}(\coker(P))=H^0(C(P))$

\begin{cor}{\label{C:monoepi}}
  A morphism $P$ as above is a monomorphism if and only if $\kappa$ is a
  monomorphism and $H^{-1}(C(P))\in \mcF$. The morphism $P$ is an epimorphism
  if and only if $\rho$ is an
  epimorphism and  $H^{-1}(C(P)) \in \mcT$.
\end{cor}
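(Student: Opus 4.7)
The plan is to reduce the corollary to the explicit kernel and cokernel formulas stated immediately before it, and then to unpack what it means for a two-term object of $\B$ to vanish. Recall the general fact in any abelian category: a morphism is a monomorphism iff its kernel is zero, and is an epimorphism iff its cokernel is zero. So I need to detect when the two-term complexes $[X^{-1}\torel{\kappa}A]$ and $[E/A\torel{\rho}Y^0]$ are isomorphic to the zero object in $\B$.

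My first observation is that an object $\bbZ=[Z^{-1}\torel{d}Z^0]$ of $\B$ is the zero object if and only if $d$ is an isomorphism. Indeed, $\B$ embeds fully faithfully into $\mcD(\A)$ (it is the heart of a $t$-structure), so $\bbZ\cong 0$ in $\B$ iff $\bbZ$ is acyclic, which for a two-term complex is equivalent to $d$ being an isomorphism. Equivalently, $\bbZ=0$ iff $H^{-1}(\bbZ)=0$ and $H^{0}(\bbZ)=0$.

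For the monomorphism statement, I apply this to $\ker P=[X^{-1}\torel{\kappa}A]$. Its cohomology was already identified: $H^{-1}(\ker P)=\ker\kappa$ and $H^{0}(\ker P)=A/\im\kappa=T$, where $T$ is the torsion part of $H^{-1}(C(P))$. Hence $\ker P=0$ iff $\kappa$ is a monomorphism and $T=0$. But $T=0$ is exactly the statement that $H^{-1}(C(P))$ is torsion-free, i.e.\ lies in $\mcF$. For the epimorphism statement, I apply the same criterion to $\coker P=[E/A\torel{\bar\rho}Y^0]$, whose cohomology is $H^{-1}(\coker P)=\ker\rho/A=F$ (the torsion-free part of $H^{-1}(C(P))$) and $H^{0}(\coker P)=Y^0/\im\rho=H^{0}(C(P))$. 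Thus $\coker P=0$ iff $F=0$ and $\rho$ is an epimorphism, and $F=0$ is precisely the condition $H^{-1}(C(P))\in\mcT$.

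There is no real obstacle here: once the identifications $H^{0}(\ker P)=T$ and $H^{-1}(\coker P)=F$ are in hand (which were recorded just above the corollary), the argument is a direct translation of "torsion part vanishes iff torsion-free" and "torsion-free part vanishes iff torsion." The only mild care needed is the passage from $\kappa\:X^{-1}\to E$ to the corestricted map $\kappa\:X^{-1}\to A$ when reading off injectivity, which is harmless since $A\subseteq E$.
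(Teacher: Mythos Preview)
Your proof is correct and is exactly the argument the paper intends: the corollary is stated without proof because it follows immediately from the explicit descriptions of $\ker P$ and $\coker P$ and their cohomologies given just above, which is precisely what you unpack. The only thing worth noting is that the paper records $H^{-1}(\ker P)=H^{-2}(C(P))$ rather than $\ker\kappa$ directly, but since $C(P)=[X^{-1}\torel{\kappa}E\torel{\rho}Y^0]$ these are the same, so your formulation is equivalent.
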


\begin{cor}{\label{C:isom}}
   A morphism $P$ as above is an isomorphism  if and only if
   the NW-SE sequence $C(P)$ is short exact. In this case, the inverse of
   $P$ is obtained by flipping the diagram with respect to the vertical axis.
\end{cor}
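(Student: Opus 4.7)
The ``only if'' direction is formal from Corollary \ref{C:monoepi}. An isomorphism is in particular a monomorphism and an epimorphism, so that corollary forces $\kappa$ to be mono, $\rho$ to be epi, and $H^{-1}(C(P))$ to lie in $\mcT\cap\mcF$. Any object $M\in\mcT\cap\mcF$ satisfies $\Hom(M,M)=0$ by the defining orthogonality of the torsion pair, hence $M=0$; so $H^{-1}(C(P))=0$, and the complex $C(P)$ is short exact.

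For the ``if'' direction, assume $C(P)$ is short exact, and let $Q\:\bbY\to\bbX$ be represented by the diagram obtained from $P$ by reflection across the vertical axis. I first verify that $Q$ is a legitimate butterfly in $\B$: its new NE--SW sequence is $C(P)$ itself, which is short exact by hypothesis; its new ``diagonals compose to zero'' condition reduces to $\sigma\circ\iota=0$, which is part of the short exactness of the original NE--SW sequence; and commutativity with the two differentials is inherited unchanged from $P$. By symmetry of the setup---the flip is involutive, and $C(Q)$ is just the NE--SW sequence of $P$, which is automatically short exact---it suffices to prove $P\circ Q=\id_\bbY$; the identity $Q\circ P=\id_\bbX$ then follows by applying the same argument with $Q$ in place of $P$.

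Applying the composition rule of $\S$\ref{SS:compose}, the middle object of $P\circ Q$ is $M:=\coker\bigl(X^{-1}\xrightarrow{(\kappa,\kappa)} E\times_{X^0}E\bigr)$, the fibre product being formed with $\sigma$ on both factors. Using that the original NE--SW sequence is short exact, every pair in $E\times_{X^0}E$ can be written uniquely as $(a,a-\iota(y'))$ for some $y'\in Y^{-1}$, so I can define $\phi\:M\to Y^0\oplus Y^{-1}$ by $[(a,b)]\mapsto(\rho(a),-y')$. A candidate inverse sends $(y^0,y^{-1})$ to $[(a,\,a+\iota(y^{-1}))]$ for any lift $a\in\rho^{-1}(y^0)$; the existence of $a$ and the independence of this choice modulo the quotient use exactly the short exactness of $C(P)$ (namely $\rho$ epi and $\ker\rho=\im\kappa$). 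It then remains to check that $\phi$ carries the four arrows of $P\circ Q$ produced by the composition formula to those of the standard strict representative of $\id_\bbY$, whose middle object is $Y^0\oplus Y^{-1}$; once this matching is in hand, the uniqueness remark in $\S$\ref{SS:definition} identifies the two isomorphism classes. This last matching---purely a sign-and-projection bookkeeping exercise---is the main, though entirely mechanical, obstacle.
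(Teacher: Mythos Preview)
Your argument is correct. The ``only if'' direction is exactly the one the paper has in mind: it is immediate from Corollary~\ref{C:monoepi} together with the fact that $\mcT\cap\mcF=0$ (which you justify correctly via $\Hom(M,M)=0$).

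For the ``if'' direction you take a slightly different path from what the paper implicitly suggests. The paper treats the corollary as an immediate consequence of Corollary~\ref{C:monoepi}: if $C(P)$ is short exact then $\kappa$ is mono, $\rho$ is epi, and $H^{-1}(C(P))=0$ lies in both $\mcT$ and $\mcF$, so $P$ is simultaneously a monomorphism and an epimorphism in the abelian category $\B$, hence an isomorphism. This gives the isomorphism statement with no computation at all. The identification of $P^{-1}$ with the flipped diagram still requires a verification of the sort you carry out, so the paper is not really saving any work on part~(b) of the statement; it is just separating ``$P$ is invertible'' from ``the flip is the inverse''. Your approach bundles the two together by directly computing $P\circ Q$ (and $Q\circ P$ by symmetry), which is a perfectly good alternative. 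The bookkeeping you outline---using exactness of the NE--SW sequence to parametrize $E\times_{X^0}E$ and exactness of $C(P)$ to see the candidate inverse to $\phi$ is well defined---is accurate, and the final matching of the four arrows against those of the strict model for $\id_{\bbY}$ (middle object $Y^0\oplus Y^{-1}$ with maps $(d,-\id)$, $\inc_2$, $\pr_1$, $\id+d$ as in \S\ref{SS:definition}) is, as you say, a routine sign check.
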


\begin{cor}
  Let $f\: \bbX \to \bbY$ be a strict morphism that is an
  equivalence. Then, the inverse $f^{-1} \: \bbY \to \bbX$
  corresponds to the diagram

        $$\xymatrix@C=8pt@R=6pt@M=6pt{ Y^{-1} \ar[rd]^{\inc_2}  \ar[dd]_{d}
                            & & X^{-1} \ar[ld]_{(d,-f^{-1})}  \ar[dd]^{d} \\
                  &  X^0 \oplus Y^{-1} \ar[ld]^{f^0+d}  \ar[rd]_{\pr_1}   & \\
                  Y^0 & & X^0       }$$
\end{cor}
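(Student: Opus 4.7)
The plan is to reduce the statement to Corollary \ref{C:isom}, which asserts that for any isomorphism in $\B$ the inverse is obtained by reflecting the butterfly diagram across the vertical axis. Since $f$ is a strict morphism that is an equivalence, this reflection principle applies, and the only remaining task is to identify the reflected arms.

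First I would recall the standard presentation of a strict morphism from $\S$\ref{SS:definition}: the strict morphism $f\colon\bbX\to\bbY$ is represented by the butterfly with middle object $E=X^0\oplus Y^{-1}$ whose arms are $(d,-f^{-1})\colon X^{-1}\to E$ in the NW, $\inc_2\colon Y^{-1}\to E$ in the NE, $\pr_1\colon E\to X^0$ in the SW, and $f^0+d\colon E\to Y^0$ in the SE. Because $f$ is assumed to be an equivalence, the NW--SE sequence of this diagram is short exact (by Corollary \ref{C:isom}), which verifies the hypothesis under which reflection produces the inverse.

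Reflection across the vertical axis swaps the NW corner with the NE corner and the SW corner with the SE corner, while keeping the middle object fixed. Consequently, $Y^{-1}$ moves to the NW position and $X^{-1}$ to the NE; $Y^0$ moves to the SW position and $X^0$ to the SE. The arms originating from $Y^{-1}$ (namely $\inc_2$ into $E$, and $d$ into $Y^0$) now occupy the NW arm and the left vertical edge; the arms originating from $X^{-1}$ (namely $(d,-f^{-1})$ into $E$ and $d$ into $X^0$) become the NE arm and the right vertical edge; and the two arms leaving $E$ swap, so $f^0+d$ becomes the SW arm and $\pr_1$ becomes the SE arm. This is exactly the diagram in the statement.

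The only step that requires any care is the elementary bookkeeping of which arm lands where under reflection; there is no genuine obstacle, since Corollary \ref{C:isom} already supplies the nontrivial content, namely that reflection yields the inverse isomorphism in $\B$.
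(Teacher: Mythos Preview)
Your proposal is correct and follows exactly the same approach as the paper: write down the butterfly for the strict morphism $f$ from $\S$\ref{SS:definition}, then invoke Corollary~\ref{C:isom} to flip it across the vertical axis. The paper's proof is the two-line version of what you wrote; your added bookkeeping of which arm lands where is accurate and matches the diagram in the statement.
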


\begin{proof}
 Use the discussion of $\S$\ref{SS:definition} to find the
 diagram corresponding to $f$. Then flip the diagram.
\end{proof}

The short exact sequence
  $$0\to \ker P[1] \to C(P) \to \coker(P)\to 0$$
 of complexes  gives rise to the  exact sequence
  $$0\to H^0(\ker P) \to H^{-1}(C(P)) \to H^{-1}(\coker P) \to 0$$
of cohomologies.

We also have the following.

\begin{prop}{\label{P:long}}
  There is a long exact sequence
   {\footnotesize
  $$
    0 \to H^{-2}(C(P)) \to  H^{-1}(\bbX) \to H^{-1}(\bbY)  \to
         H^{-1}(C(P)) \to H^0(\bbX) \to H^0(\bbY) \to H^0(C(P)) \to 0.
  $$
  }
\end{prop}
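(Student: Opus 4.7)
The starting point is the description of $C(P)$ as the cone of $P$ in $\mcD(\A)$: the preceding discussion exhibits a distinguished triangle
$$\bbX \lra \bbY \lra C(P) \lra \bbX[1]$$
in $\mcD(\A)$, where $C(P) = [X^{-1} \torel{\kappa} E \torel{\rho} Y^0]$ with $Y^0$ sitting in degree $0$. Applying the cohomological functor $H^n\: \mcD(\A) \to \A$ then produces the standard infinite long exact sequence
$$\cdots \to H^n(\bbX) \to H^n(\bbY) \to H^n(C(P)) \to H^{n+1}(\bbX) \to \cdots$$
which I will truncate using the concentration of each complex in a bounded range of degrees.

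Concretely, $\bbX$ and $\bbY$ are concentrated in degrees $-1$ and $0$, so $H^n(\bbX) = H^n(\bbY) = 0$ for $n \notin \{-1,0\}$. Similarly $C(P)$ is concentrated in degrees $-2,-1,0$, so $H^n(C(P)) = 0$ for $n \notin \{-2,-1,0\}$. Plugging these vanishings into the long exact sequence instantly kills everything outside the window from $H^{-2}(C(P))$ through $H^0(C(P))$ and collapses the endpoints to zero, leaving precisely the stated seven-term sequence.

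The only nontrivial point in this plan is justifying that the triangle $\bbX \to \bbY \to C(P) \to \bbX[1]$ is genuinely distinguished in $\mcD(\A)$, and that the connecting maps are the ones implicit in the butterfly data $(\kappa,\iota,\sigma,\rho)$. The cleanest way to see this is to first recognize the NW-SE complex $C(P)$ as a three-term complex fitting into the short exact sequence of complexes
$$0 \to \bbY \to \Cyl(P) \to C(P) \to 0,$$
where $\Cyl(P) := [X^{-1} \torel{(\kappa,d)} E\oplus X^0 \torel{\sigma-\id} X^0]$ is a natural ``mapping cylinder'' built from the butterfly (so that the map $\Cyl(P)\to \bbX$ obtained by collapsing $X^0$ is a quasi-isomorphism and its composition with $\bbY \hookrightarrow \Cyl(P)$ represents $P$ in $\mcD(\A)$). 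This realizes $C(P)$ as the actual mapping cone of a chain map representative of $P$, hence the triangle is distinguished.

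The main obstacle is thus the bookkeeping around the cylinder/cone construction rather than anything conceptual: once the triangle is known to be distinguished, the rest is a mechanical application of the long exact sequence together with the cohomological bounds. If one prefers to avoid constructing $\Cyl(P)$ explicitly, an alternative is to splice the short exact sequence of complexes $0 \to \ker P[1] \to C(P) \to \coker P \to 0$ from $\S\ref{SS:kernel}$ with the kernel/cokernel exact sequences of $P$ in $\B$ and use the already established identifications $H^{-1}(\ker P) = H^{-2}(C(P))$, $H^0(\ker P) = T$, $H^{-1}(\coker P) = F$, $H^0(\coker P) = H^0(C(P))$, together with the exact sequence $0 \to T \to H^{-1}(C(P)) \to F \to 0$ already displayed just above the proposition, to reconstruct the seven-term sequence term by term.
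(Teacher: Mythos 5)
Your core approach coincides exactly with the paper's: Proposition~\ref{P:long} is proved there in one line by applying cohomology to the distinguished triangle $\bbX \to \bbY \to C(P) \to \bbX[1]$ and using the degree bounds, which is precisely your first two paragraphs. Your extra effort to justify that the triangle is distinguished is a reasonable thing to want, but the specific cylinder you write down does not work: for $\Cyl(P) = [X^{-1} \torel{(\kappa,d)} E\oplus X^0 \torel{\sigma-\id} X^0]$, the projection to $\bbX$ fails to be a quasi-isomorphism in general. In degree $-1$ one gets $\ker(\kappa,d) = \ker\kappa$, which is only a subobject of $H^{-1}(\bbX) = \ker d$ (since $\sigma\kappa = d$ gives $\ker\kappa \subseteq \ker d$ but not equality), and in degree $0$ the induced map $E/\im\kappa \to X^0/\im d$ has kernel $(\im\kappa + \im\iota)/\im\kappa$, again nonzero in general. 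The clean justification is the one already available in the paper via Lemma~\ref{L:roof1}: the butterfly $P$ is represented by the roof $\bbX \xleftarrow{s} \bbE \xrightarrow{g} \bbY$ with $\bbE = [X^{-1}\oplus Y^{-1} \llra{\kappa+\iota} E]$ and $s$ an isomorphism in $\B$; the honest mapping cone of the chain map $g$ is then seen, by collapsing the contractible summand $[Y^{-1}\torel{\id}Y^{-1}]$, to be quasi-isomorphic to the NW-SE complex $C(P)$, which exhibits the triangle as distinguished. Your alternative splicing route via the kernel/cokernel sequences of $\S$\ref{SS:kernel} is also correct and is a genuinely different (more hands-on, triangle-free) derivation, at the cost of more bookkeeping; the paper opts for the triangulated shortcut.
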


\begin{proof}
This is the  exact sequence for the exact triangle $\bbX \to \bbY
\to C(P) \to \bbX[1]$.
\end{proof}

\subsection{The epi-mono factorization}{\label{SS:epimono}}

Notation being as in $\S$\ref{SS:kernel}, it is easy to see that the
cokernel of the map $\ker P \to \bbX$ is the complex
  $$\operatorname{coim} P:=[A \torel{\sigma} X^0],$$
and the kernel of $\bbY \to \coker P$ is the complex
  $$\im P:=[Y^{-1} \llra{\pr\circ\iota} E/A].$$

There is a canonical isomorphism $\operatorname{coim} P \to \im P$
given by
  $$\xymatrix@C=8pt@R=6pt@M=6pt{ A \ar[rd] \ar[dd]
                            & & Y^{-1} \ar[ld]_{\iota} \ar[dd] \\
                  & E \ar[ld]^{\sigma} \ar[rd]  & \\
                  X^0 & & E/A       }$$
(See Corollary \ref{C:isom}.) So the epi-mono factorization of $P$
looks like
 $$\xymatrix@C=8pt@R=6pt@M=6pt{X^{-1} \ar[r]^{\kappa} \ar[dd]  & A \ar[rd] \ar[dd]
          & & Y^{-1} \ar[ld]_{\iota} \ar[dd] \ar[r]^{\id} & Y^{-1} \ar[dd]  & \\
                            & & E \ar[ld]^{\sigma} \ar[rd]  &  &\\
                  X^0 \ar[r]_{\id} &  X^0 & & E/A \ar[r]_{\rho} & Y^0      }$$

\section{Complexes in $\B$ and strict morphisms between them}
   {\label{S:complexesstrict}}

In this section we prepare ourselves for the first main result of
these notes that will appear in Section \ref{S:complexes}; see
$\S$\ref{SS:main}. One of the major players here is the category
$\Chst(\B)$ defined below.

Let $\Chst(\B) \subset \Ch(\B)$ be the category whose objects are
complexes
     $$\bfX= \\\cdots \to\, ^{n-1}\bbX \to\,
                      ^{n}\bbX \to\, ^{n+1}\bbX \to \cdots$$
of objects in $\B$, and whose morphisms are {\bf strict} morphisms
of complexes, that is, morphisms $\bfX \to \bfY$ such that for every
$n$ the morphism $^{n}\bbX \to\, ^{n}\bbY$ is strict; see
$\S$\ref{SS:definition}.

We define two classes of morphisms $\sis \subset \qis$ in
$\Chst(\B)$. The class $\sis$ consists of morphisms $s \: \bfX \to
\bfY$ that become  isomorphisms in $\Ch(\B)$; {\em note that
$s^{-1}$ may no longer be strict, so $s$ is not necessarily an
isomorphism in $\Chst(\B)$}. The class $\qis$ consists of all
quasi-isomorphisms in $\Chst(\B)$.

The class  $\sis$ is indeed a localizing class. This follows from
Lemma \ref{L:roof2} below.

\begin{lem}{\label{L:roof1}}
  Let $P \: \bbX \to \bbY$ be a morphism in $\B$. Then,
  there is a functorial commutative diagram
       $$\xymatrix@=10pt@M=8pt@C=14pt{ & \bbE \ar [ld]_s \ar[rd]^g & \\
          \bbX \ar[rr]_P \ar[rd]_h && \bbY \ar[ld]^{t} \\
          & \bbF &}$$
  in $\B$ such that $s$, $t$,  $g$ and $h$  are strict
  ($\S$\ref{SS:definition}) and $s$ and $t$ are  isomorphisms
  (note that $s^{-1}$ and $t^{-1}$ are no longer strict).
\end{lem}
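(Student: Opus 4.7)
The plan is to build $\bbE$ and $\bbF$ directly from the middle object $E$ of the butterfly diagram representing $P$. Specifically, I would set
$$\bbE := \bigl[X^{-1}\oplus Y^{-1} \xrightarrow{\,\kappa+\iota\,} E\bigr], \qquad
  \bbF := \bigl[E \xrightarrow{\,(\sigma,\rho)\,} X^0\oplus Y^0\bigr],$$
and take $s=(\pr_1,\sigma)\colon \bbE\to\bbX$, $g=(\pr_2,\rho)\colon \bbE\to\bbY$, $h=(\kappa,\inc_1)\colon \bbX\to\bbF$, and $t=(\iota,\inc_2)\colon \bbY\to\bbF$, where $\inc_1,\inc_2$ denote the coordinate inclusions. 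All four maps are visibly chain maps, hence strict by $\S$\ref{SS:definition}, and the entire construction is functorial in $P$ since each ingredient is.

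Next I would check that $\bbE,\bbF\in\B$ and that $s,t$ are quasi-isomorphisms. For $\bbE$, applying $\sigma$ shows that $\pr_1$ identifies $\ker(\kappa+\iota)$ with $\ker d_X\in\mcF$: an element $(x,y)$ in the kernel forces $d_X(x)=\sigma(\kappa(x)+\iota(y))=0$, and $y$ is then the unique preimage of $-\kappa(x)\in\ker\sigma=\iota(Y^{-1})$ under $\iota$. The NE-SW short exactness lets $\sigma$ descend to an isomorphism $\coker(\kappa+\iota)\cong\coker d_X\in\mcT$. Dually for $\bbF$: $\ker(\sigma,\rho)=\iota(\ker d_Y)\cong\ker d_Y\in\mcF$, and surjectivity of $\sigma$ rewrites any class $(x,y)+(\sigma,\rho)(E)$ as $(0,y')+(\sigma,\rho)(E)$, giving $\coker(\sigma,\rho)\cong\coker d_Y\in\mcT$. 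The same computations show that $s$ and $t$ induce isomorphisms on both $H^{-1}$ and $H^0$, so they are quasi-isomorphisms, hence isomorphisms in $\B$ by Corollary \ref{C:isom} (although their inverses are no longer strict).

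It remains to verify $P\circ s=g$ and $t\circ P=h$ as isomorphism classes of butterflies, and this is where the main technical weight of the proof sits. Since $s$ is strict, the first formula of $\S$\ref{SS:compose} represents $P\circ s$ by a butterfly with middle $\sigma^{\ast}(E)=E\oplus_{X^0}E$, whereas $g$ is represented by the strict butterfly with middle $E\oplus Y^{-1}$. The assignment
$$E\oplus Y^{-1}\longrightarrow E\oplus_{X^0}E,\qquad (e,y)\longmapsto\bigl(e,\,e+\iota(y)\bigr),$$
is an isomorphism with inverse $(e_1,e_2)\mapsto(e_1,\iota^{-1}(e_2-e_1))$, well defined because $e_2-e_1\in\ker\sigma=\iota(Y^{-1})$, and a direct check confirms it intertwines the four structure arrows on both sides. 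The identity $t\circ P=h$ is established dually, using the pushout formula for composition with a strict second factor, through the isomorphism $E\oplus_{Y^{-1}}E\cong X^0\oplus E$ defined by $[e_1,e_2]\mapsto(\sigma(e_1),e_1+e_2)$. Nothing beyond bookkeeping is involved, but one must carry it out carefully on each of the four arrows of each butterfly.
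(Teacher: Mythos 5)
Your proposal is correct and takes essentially the same approach as the paper: you construct $\bbE=[X^{-1}\oplus Y^{-1}\to E]$ and $\bbF=[E\to X^0\oplus Y^0]$ with exactly the same four strict maps $s=(\pr_1,\sigma)$, $g=(\pr_2,\rho)$, $t=(\iota,\inc_2)$, $h=(\kappa,\inc_1)$. You in fact supply more detail than the paper on checking $\bbE,\bbF\in\B$ and the identities $P\circ s=g$, $t\circ P=h$, while being a bit terser on functoriality, which the paper handles by producing the (generally non-strict) comparison map $w=s^{-1}\circ u\circ s'$.
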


\begin{proof}
    First we prove the existence of $s$ and $g$.
    Consider the diagram for $P$
           $$\xymatrix@C=8pt@R=6pt@M=6pt{ X^{-1} \ar[rd]^{\kappa}
           \ar[dd]
                            & & Y^{-1} \ar[ld]_{\iota} \ar[dd]  \\
                  & E \ar[ld]^{\sigma} \ar[rd]_{\rho}  & \\
                  X^0 & & Y^0       }$$
   We define
       $$\bbE:=[X^{-1}\oplus Y^{-1}\llra{\kappa+\iota} E].$$
   The strict map $s\: \bbE \to \bbX$ is given by $(\pr_1, \sigma)$ and is
   easily seen to be an isomorphism. The map  $g \: \bbE \to \bbY$
   is defined by  $(\pr_2,\rho)$.

   The construction of $t$ and $h$ is similar. We take
     $$\bbF:=[E \llra{(\sigma,\rho)} X^0\oplus Y^0].$$
   The strict map $t\: \bbY \to \bbF$ is given by $(\iota, \inc_2)$ and is
   easily seen to be an isomorphism. The map $h \: \bbX \to \bbF$ is
   defined by $(\kappa,\inc_1)$.

  Let us prove the functoriality of $\bbE$. Consider the commutative diagram
      $$\xymatrix@C=10pt@R=10pt@M=6pt{ \bbX' \ar[r]^{P'}
                   \ar[d]_u & \bbY' \ar[d]^v \\
                                     \bbX \ar[r]_{P} & \bbY }$$
  and let $\bbE$ and $\bbE'$ be constructed as above. Let
  $w \: \bbE \to \bbE'$
  be $s^{-1}\circ u\circ s'$. It is easy to see that $w$ commutes
  with both $s$ maps and the $g$ maps; in fact $w$ is uniquely
  determined by this property. (Observe that we did not require
  $w$ to be strict.)

   The functoriality of $\bbF$ is proved in a similar way.
\end{proof}

\begin{lem}{\label{L:roof2}}
  Let $f \: \bfX \to \bfY$ be a morphism in $\Ch(\B)$.
  Then, there is a  commutative diagram
       $$\xymatrix@=10pt@M=8pt@C=14pt{ & \bfE \ar [ld]_s \ar[rd]^g & \\
          \bfX \ar[rr]_f \ar[rd]_h && \bfY \ar[ld]^{t} \\
                                                     & \bfF &}$$
  in $\Ch(\B)$ such that $s$, $t$,  $g$ and $h$  are in $\Chst(\B)$
  and $s$ and $t$ are  in $\sis$.
\end{lem}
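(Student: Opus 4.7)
The plan is to apply Lemma \ref{L:roof1} in each degree and use its functoriality to assemble the pieces into a diagram of complexes. Concretely, for each integer $n$, apply Lemma \ref{L:roof1} to the morphism $f^n\colon {}^n\bbX \to {}^n\bbY$ in $\B$ to obtain objects ${}^n\bbE$ and ${}^n\bbF$ together with strict morphisms
$$s^n\colon {}^n\bbE \to {}^n\bbX, \quad g^n\colon {}^n\bbE \to {}^n\bbY, \quad h^n\colon {}^n\bbX \to {}^n\bbF, \quad t^n\colon {}^n\bbY \to {}^n\bbF,$$
such that $s^n$ and $t^n$ are isomorphisms in $\B$.

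Next I would define the differentials on the graded objects $\bfE = \{{}^n\bbE\}_n$ and $\bfF = \{{}^n\bbF\}_n$. Applying the functoriality clause of Lemma \ref{L:roof1} to the commutative square
$$\xymatrix@C=20pt@R=14pt@M=4pt{ {}^n\bbX \ar[r]^{f^n} \ar[d]_{d^n_\bfX} & {}^n\bbY \ar[d]^{d^n_\bfY} \\ {}^{n+1}\bbX \ar[r]_{f^{n+1}} & {}^{n+1}\bbY }$$
produces a unique morphism $d^n_\bfE \colon {}^n\bbE \to {}^{n+1}\bbE$ commuting with the $s$'s and the $g$'s, and analogously a unique $d^n_\bfF \colon {}^n\bbF \to {}^{n+1}\bbF$ commuting with the $h$'s and the $t$'s. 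By the same uniqueness statement applied to the zero square obtained from $d^{n+1}_\bfX d^n_\bfX = 0$ and $d^{n+1}_\bfY d^n_\bfY = 0$, both $d^{n+1}_\bfE d^n_\bfE$ and the zero map satisfy the defining property, so they coincide; hence $\bfE$ is a complex in $\B$, and similarly so is $\bfF$.

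With these differentials in place, $s$, $g$, $h$, $t$ are chain maps by construction, and they are in $\Chst(\B)$ since strictness holds degreewise. The fact that $s$ and $t$ are in $\sis$ is immediate: they are isomorphisms degreewise (Lemma \ref{L:roof1}), hence isomorphisms in $\Ch(\B)$. Commutativity of the outer diagram $g \circ s = f \circ \text{(same)}$, $t \circ f = h \circ \text{(same)}$, i.e.\ of the two triangles, reduces to the corresponding commutativities in each degree, which are guaranteed by Lemma \ref{L:roof1}.

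The only potential obstacle is the verification $d \circ d = 0$ on $\bfE$ and $\bfF$, and this is handled by the uniqueness clause of the functoriality as above, so no computation with the explicit models ${}^n\bbE = [X^{-1}\oplus Y^{-1} \to E]$ and ${}^n\bbF = [E \to X^0 \oplus Y^0]$ is actually needed. (If one prefers, one can verify it directly from these explicit formulas, but the functoriality argument makes it essentially automatic.)
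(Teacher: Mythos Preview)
Your proposal is correct and is exactly the approach the paper has in mind: the paper's proof reads in full ``This follows immediately from Lemma~\ref{L:roof1},'' and you have simply unpacked that sentence by applying Lemma~\ref{L:roof1} degreewise and invoking its functoriality/uniqueness clause to obtain the differentials and verify $d^2=0$. No changes are needed beyond cleaning up the slightly garbled phrasing in the commutativity sentence.
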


\begin{proof}
  This follows immediately from Lemma \ref{L:roof1}.
\end{proof}

\begin{prop}{\label{P:derived}}
  The inclusion $\Chst(\B) \hookrightarrow \Ch(\B)$ induces  the
  following  equivalences of categories:
     $$\sis^{-1}\Chst(\B) \risom \Ch(\B),$$
     $$\qis^{-1}\Chst(\B) \risom \mcD(\B).$$
\end{prop}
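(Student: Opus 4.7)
The plan is to establish the first equivalence directly via a calculus of fractions, and then deduce the second as a routine re-localization. First I observe that $\Chst(\B) \hookrightarrow \Ch(\B)$ is a subcategory inclusion with the same objects (in particular it is faithful), and that by the very definition of $\sis$ it sends $\sis$ to isomorphisms; hence it factors through a canonical functor $F \: \sis^{-1}\Chst(\B) \to \Ch(\B)$ which is the identity on objects. The goal is to show that $F$ is fully faithful.

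Next I would verify that $\sis$ is a multiplicative system in $\Chst(\B)$. Closure under composition and the 2-out-of-3 property are clear, since $\sis$ is defined as the class of strict morphisms that become isomorphisms in $\Ch(\B)$. For the left Ore condition, given $f \: \bfX \to \bfY$ in $\Chst(\B)$ and $s \: \bfY' \to \bfY$ in $\sis$, one forms $s^{-1} f \: \bfX \to \bfY'$ inside $\Ch(\B)$ and applies Lemma \ref{L:roof2} to it, producing a strict roof $\bfX \xleftarrow{t} \bfX' \xrightarrow{g} \bfY'$ with $t \in \sis$ satisfying $s g = f t$ in $\Ch(\B)$, hence also in $\Chst(\B)$ by faithfulness. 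The right Ore condition is handled symmetrically via the bottom coroof of Lemma \ref{L:roof2}, and the cancellation axiom is immediate since elements of $\sis$ are monic and epic in $\Ch(\B)$.

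With the calculus of fractions in hand, morphisms in $\sis^{-1}\Chst(\B)$ are equivalence classes of strict left roofs. Essential surjectivity of $F$ is automatic, and fullness is precisely Lemma \ref{L:roof2}. For faithfulness, suppose two strict roofs $(s, g)$ and $(s', g')$ satisfy $g s^{-1} = g' s'^{-1}$ in $\Ch(\B)$. Then the morphism $h := s'^{-1} s \: \bfE \to \bfE'$ in $\Ch(\B)$ obeys $s' h = s$ and $g' h = g$. Applying Lemma \ref{L:roof2} to $h$ yields a strict roof $\bfE \xleftarrow{u} \bfE'' \xrightarrow{h'} \bfE'$ with $u \in \sis$ and $h = h' u^{-1}$ in $\Ch(\B)$. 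The pair $(u, h')$ is then a strict common refinement: one has $s u = s' h'$ and $g u = g' h'$ in $\Ch(\B)$, which lift to $\Chst(\B)$ by faithfulness, and $s u \in \sis$ by closure under composition. This exhibits the equivalence of the two roofs.

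Finally, since $\sis \subset \qis$, the localization $\qis^{-1}\Chst(\B)$ agrees with the further localization of $\sis^{-1}\Chst(\B)$ at the image of $\qis$. Under the equivalence $F$ just established, this image is exactly the class of quasi-isomorphisms in $\Ch(\B)$, whose inversion gives $\mcD(\B)$ by definition. The main obstacle in the entire argument is the verification of the Ore conditions, but this is essentially a direct repackaging of Lemma \ref{L:roof2} combined with the faithfulness of the subcategory inclusion; afterwards the argument is standard calculus of fractions.
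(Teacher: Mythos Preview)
Your argument is correct and follows the same route as the paper's: both rest entirely on Lemma~\ref{L:roof2} to produce strict roofs/coroofs, with the paper simply leaving the calculus-of-fractions verification you spell out as implicit. One small imprecision worth fixing: the image of $\qis$ in $\sis^{-1}\Chst(\B)\cong\Ch(\B)$ is only the class of \emph{strict} quasi-isomorphisms, not all quasi-isomorphisms; the second equivalence still follows because (again by Lemma~\ref{L:roof2}) every quasi-isomorphism in $\Ch(\B)$ factors as a strict quasi-isomorphism composed with the inverse of an element of $\sis$, so inverting either class yields $\mcD(\B)$.
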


\begin{proof}
  The first equivalence follows immediately from Lemma \ref{L:roof2}.
  The second equivalence follows from the first equivalence.
\end{proof}

We will need the following  lemma in Section \ref{S:iterate}.

\begin{lem}{\label{L:localizing}}
  Let $a < b$ be two integers.  Then the full subcategory
  $^{[a,b]}\Chst(\B)$ of
  $\Chst(\B)$ consisting of complexes concentrated in degrees lying in
  the interval
  $[a,b]$ is localizing with respect to both $\sis$ and $\qis$.
  That is, we have fully faithful functors:
    $$P\: \sis^{-1}\,^{[a,b]}\Chst(\B) \to \sis^{-1}\Chst(\B),$$
    $$Q\: \qis^{-1}\,^{[a,b]}\Chst(\B) \to \qis^{-1}\Chst(\B).$$
  (Here, by abuse of notation, we have denoted $\sis^{-1}\cap
  \,^{[a,b]}\Chst(\B)$ and $\qis^{-1}\cap
  \,^{[a,b]}\Chst(\B)$ also by $\sis$ and $\qis$.)
  The same thing is true if we take the full subcategory of complexes
  $\bfX$ in $^{[a,b]}\Chst{\B}$ such  that
  $H^{a}(\bfX) \in \mcF'$ and $H^{b}(\bfX) \in \mcT'$.
\end{lem}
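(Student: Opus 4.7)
The plan is to verify the classical Gabriel--Zisman criterion for fully faithful localization.

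For the class $\sis$, the argument is essentially a restriction of Proposition \ref{P:derived}. The crucial observation is that the constructions of Lemma \ref{L:roof1}, and hence of Lemma \ref{L:roof2}, are \emph{degreewise}: the complex $\bfE$ is obtained by applying the $\bbE$-construction to each degree separately, so that if ${}^n\bbX={}^n\bbY=0$ then ${}^n\bbE=0$ as well, and similarly for $\bfF$. Consequently, whenever $\bfX,\bfY\in\,^{[a,b]}\Chst(\B)$, the objects $\bfE,\bfF$ furnished by Lemma \ref{L:roof2} automatically lie in $\,^{[a,b]}\Chst(\B)$. This shows that $\sis$ admits a calculus of fractions inside $\,^{[a,b]}\Chst(\B)$, and repeating the argument of Proposition \ref{P:derived} identifies the resulting localization with $\,^{[a,b]}\Ch(\B)$, the full subcategory of $\Ch(\B)=\sis^{-1}\Chst(\B)$ of complexes concentrated in $[a,b]$. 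Since $\,^{[a,b]}\Ch(\B)\hookrightarrow\Ch(\B)$ is tautologically fully faithful, $P$ is fully faithful.

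For the class $\qis$, the same degreewise argument combined with $\sis\subset\qis$ yields the identification $\qis^{-1}\,^{[a,b]}\Chst(\B)\cong\qis^{-1}\,^{[a,b]}\Ch(\B)$. It therefore suffices to prove that $\qis^{-1}\,^{[a,b]}\Ch(\B)\to\qis^{-1}\Ch(\B)=\mcD(\B)$ is fully faithful, which is the classical fact that the bounded derived category $\mcD^{[a,b]}(\B)$ embeds fully faithfully into $\mcD(\B)$. I would prove it by the usual truncation trick: a roof $\bfX\leftarrow\bfZ\to\bfY$ in $\Ch(\B)$ with $\bfX,\bfY\in\,^{[a,b]}\Ch(\B)$ forces $\bfZ$ to have cohomology concentrated in $[a,b]$, so applying $\tauul{b}\tauug{a}$ to the entire roof produces an equivalent roof $\bfX\leftarrow\tauul{b}\tauug{a}\bfZ\to\bfY$ living inside $\,^{[a,b]}\Ch(\B)$, since $\tauul{b}\tauug{a}$ fixes objects already concentrated in $[a,b]$ and is connected to the identity by canonical quasi-isomorphisms on complexes with cohomology in that range.

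For the final statement, concerning the subcategory additionally satisfying $H^a(\bfX)\in\mcF'$ and $H^b(\bfX)\in\mcT'$, both steps above go through once one observes that each replacement preserves these endpoint conditions. Indeed, the $\bfE$ and $\bfF$ of Lemma \ref{L:roof2} are $\sis$-equivalent (in particular quasi-isomorphic) to the complexes they replace and hence share their cohomology; and the truncations $\tauul{b}$ and $\tauug{a}$ preserve $H^b$ and $H^a$ respectively, so $\tauul{b}\tauug{a}\bfZ$ automatically inherits the torsion-pair conditions from the comparison complex $\bfX$.

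The main obstacle is essentially a bookkeeping one: the truncation functors rely on kernels and cokernels in $\B$ (via the explicit description of $\S$\ref{SS:kernel}) and are most naturally implemented in $\Ch(\B)$ rather than in the strict-morphism category $\Chst(\B)$. This is precisely why I would establish the $\sis$ case first and then leverage it in the $\qis$ argument: once the identification $\sis^{-1}\,^{[a,b]}\Chst(\B)\cong\,^{[a,b]}\Ch(\B)$ is available, the strictness constraint disappears and the classical truncation arguments above apply verbatim.
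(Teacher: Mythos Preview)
Your proposal is correct and follows the same underlying idea as the paper --- reduce to a truncation argument --- but the implementation differs in two respects worth noting. First, for the $\qis$ case the paper passes to the homotopy category $\sfK(\B)$ (where quasi-isomorphisms form a genuine calculus of fractions by Lemma~\ref{L:Weibel}) and then builds an explicit dominating diagram with apex $\mathbf{V}=\taudl{b}\bfZ$ to compare the original roof with its truncation; you instead first invoke the $\sis$ case to pass to $\Ch(\B)$ and then appeal to the classical embedding $\mcD^{[a,b]}(\B)\hookrightarrow\mcD(\B)$. Your route is slightly more economical, but note that morphisms in $\mcD(\B)$ are roofs in $\sfK(\B)$ rather than in $\Ch(\B)$, so your sketch still implicitly relies on the homotopy-category step. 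Second, for the final statement the paper swaps out the standard truncation functors for those of the $t$-structure associated to the torsion pair $(\mcT',\mcF')$, which automatically lands in the desired subcategory; your alternative --- keep the standard truncations and observe that $\tau_{[a,b]}\bfZ$ inherits the endpoint conditions $H^a\in\mcF'$, $H^b\in\mcT'$ from the quasi-isomorphic $\bfX$ --- is equally valid and arguably more elementary, since it avoids introducing a second $t$-structure.
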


\begin{proof}
  The case of $\sis$ is obvious.
  Let us prove the case of $\qis$. Let $\sfK(\B)$ and $^{[a,b]}\sfK(\B)$
  be the  homotopy categories of
  $\Chst(\B)$ and $^{[a,b]}\Chst(\B)$. It is enough to prove
  the  statement for the full subcategory
  $^{[a,b]}\sfK(\B) \subseteq
  \sfK(\B)$. We will denote the class of quasi-isomorphisms in
  $\sfK(\B)$ by $\mcS$. Note that this is a localizing class.
  By abuse of notation, we denote
 $\mcS\cap
  \,^{[a,b]}\sfK(\B)$ also
  by $\mcS$.

   Let $\taudg{a},\taudl{b} \: \Chst(\B) \to \Chst(\B)$
   be the usual truncation functors that we know from the theory of
   $t$-structures, and let
   $\tau_{[a,b]}=\taudg{a}\circ\taudl{b} \: \Chst(\B) \to \,^{[a,b]}\Chst(\B)$.
   We  use the same notation $\tau_{[a,b]}$ for the induced
   functor on the homotopy categories as well as the localized
   categories.
    Let $\bfX,\bfY \in \, ^{[a,b]}\sfK(\B)$. We have to show that
    the map
    $$\alpha \: \Hom_{\mcS^{-1}(\,^{[a,b]}\sfK(\B))}(\bfX,\bfY)\to
                                 \Hom_{\mcS^{-1}\sfK(\B)}(Q\bfX,Q\bfY).$$
  induced by $Q$ is an isomorphism. Let
   $$\beta \:  \Hom_{\mcS^{-1}\sfK(\B)}(Q\bfX,Q\bfY) \to
                       \Hom_{\mcS^{-1}(\,^{[a,b]}\sfK(\B))}(\bfX,\bfY)$$
  be the map induced by $\tau_{[a,b]}$. We show that $\alpha$ and
  $\beta$ are inverse to each other. It is clear that
  $\beta\circ\alpha=1$. To prove  $\alpha\circ\beta=1$,
  let $\tilde{f}\in\Hom_{\mcS^{-1}\sfK(\B)}(Q\bfX,Q\bfY)$
  be given by the roof
     $$\xymatrix@C=10pt@R=8pt@M=6pt{& \bfZ\ar[ld]_{s} \ar[rd]^{f} & \\
           \bfX && \bfY }$$
  where $s\in \mcS$. Then $\alpha\circ\beta(\tilde{f})$ is
  given by the roof
    $$\xymatrix@C=14pt@R=12pt@M=6pt{
                 & \tau_{[a,b]}\bfZ\ar[ld]_(0.55){\tau_{[a,b]}(s)}
                          \ar[rd]^(0.55){\tau_{[a,b]}(f)} & \\
                                                    \bfX && \bfY }$$
  To show that this is equal to $\tilde{f}$ we construct a
  commutative diagram
   $$\xymatrix@C=16pt@R=14pt@M=6pt{
            && \mathbf{V} \ar[ld]_r \ar[rd]^h && \\
          & \bfZ \ar[ld]_s \ar[rrrd]_f &&
               \tau_{[a,b]}\bfZ\ar[llld]^{\tau_{[a,b]}(s)}
                                \ar[rd]^{\tau_{[a,b]}(f)} & \\
                       \bfX &&&& \bfY
   }$$
  in which $s\circ r \in \mcS$. This diagram is easy to construct.
  Simply take $\mathbf{V}=\taudl{b}\bfZ$ and let $r$ and $h$ be the
  unit and the counit of the adjunction for the functors $\taudl{b}$
  and $\taudg{a}$, respectively.

  The proof in the case of a torsion pair is exactly the same, once
  we take $\taudg{a}$ and $\taudl{b}$ to be the truncation functors
  of the  $t$-structure corresponding to the torsion pair.
\end{proof}
\section{Decorated complexes in $\A$}{\label{S:decorated}}

Let $\A$ be an abelian category. We will not fix a torsion pair on
$\A$ yet.

A {\bf decorated complex} in $\A$ consists of the following data:

\begin{itemize}

\item[$\diamond$] A chain complex in $\A$
     $$\Eb: \ \ \cdots \torel{\delta}  E^{n-1} \torel{\delta} E^{n}
      \torel{\delta} E^{n+1} \torel{\delta} \cdots;$$

\item[$\diamond$]  A graded subobject $\Mb$ of the underlying graded
 object of $\Eb$. That is, a
 sequence $M^{n} \subseteq E^n$, $n \in \bbZ$, of subobjects,
 not necessarily respected by $\delta$.
\end{itemize}

A morphism $(\Eb,\Mb) \to (\Fb,\Nb)$ of decorated complexes is a map
$f \: \Eb \to \Fb$ of complexes such that for every $n$,
$f(M^n)\subseteq N^n$.

We denote  the category of decorated complexes
 by $\Dec(\A)$.

\subsection{Zero and full decorations}{\label{SS:zerofull}}

 There are two natural decorations on every complex $\Eb$:
the {\em zero} decoration, in which all $M^n$ are zero, and the {\em
full} decoration, in which $M^n=E^n$, for all $n$. We have the
corresponding fully faithful embeddings
  $$i\: \Ch(\A) \hookrightarrow  \Dec(\A), \ \text{zero decoration},$$
  $$j\: \Ch(\A) \hookrightarrow  \Dec(\A), \ \text{full decoration}.$$

 Both of these functors have both left and right adjoints. The left
 adjoint to $j$ is the forgetful functor (forgetting the
 decoration), and
 the right adjoint  is what is denoted by $\mcH^{-1}$ in
 $\S$\ref{SS:deccoh}. The left adjoint to $i$ is
  $\mcH^{0}[-1]$, and the right adjoint to $i$ is the forgetful
 functor.

\subsection{Various cohomologies associated to decorated complexes}
 {\label{SS:deccoh}}

Other than the usual cohomology of the complex $\Eb$, to a decorated
complex $(\Eb,\Mb)$ we can associate the following two families of
cohomologies:
  $$H^{-1,n}(\Eb,\Mb):=\ker(M^n \torel{\delta}
    E^{n+1}/M^{n+1}),$$
  $$H^{0,n}(\Eb,\Mb):=\coker(M^n \torel{\delta}
    E^{n+1}/M^{n+1}).$$
These cohomologies themselves fit into two complexes:
{\footnotesize
$$\mcH^{-1}(\Eb,\Mb): \ \ \ \cdots \to H^{-1,n-1}(\Eb,\Mb) \to
   H^{-1,n}(\Eb,\Mb) \to
   H^{-1,n+1}(\Eb,\Mb) \cdots,$$
$$\mcH^{0}(\Eb,\Mb): \ \ \ \cdots \to  H^{0,n-1}(\Eb,\Mb) \to
   H^{0,n}(\Eb,\Mb) \to
   H^{0,n+1}(\Eb,\Mb)\to  \cdots.$$}
Let us denote the cohomologies of these complexes by
$H^n(\mcH^{0}(\Eb,\Mb))$ and $H^{n}(\mcH^{-1}(\Eb,\Mb))$, or
$H^n(\mcH^{-1})$ and $H^n(\mcH^{0})$, if $(\Eb,\Mb)$ is understood
from the context.

\begin{prop}{\label{P:cohofcoh}}
  There are natural morphisms
   $H^{n-1}(\mcH^{0})\llra{\partial}H^{n+1}(\mcH^{-1})$ fitting in
   a long exact sequence
   $$\cdots \to H^{n-2}(\mcH^{0})  \torel{\partial}
      H^{n}(\mcH^{-1})
        \to H^n(E) \to H^{n-1}(\mcH^{0}) \torel{\partial}
         \qquad \qquad\qquad \qquad \hbox{}$$ $$ \hbox{} \qquad \qquad
                                   \qquad   H^{n+1}(\mcH^{-1})
      \to H^{n+1}(E) \to H^{n}(\mcH^{0}) \torel{\partial}
      H^{n+2}(\mcH^{-1})\to \cdots.$$

\end{prop}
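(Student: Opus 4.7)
The plan is to realize the complex $\mcH^{-1}(\Eb,\Mb)$ as a subcomplex of $\Eb$ and $\mcH^{0}(\Eb,\Mb)[-1]$ as a quotient complex of $\Eb$, after which the desired long exact sequence will drop out of a single short exact sequence of complexes in $\A$, modulo one acyclicity computation. Introduce the subobjects
\[
Q^n := M^n\cap\delta^{-1}(M^{n+1}), \qquad P^n := M^n+\delta M^{n-1}
\]
of $E^n$. Using only $\delta^2=0$ one checks that $\delta(Q^n)\subseteq M^{n+1}\cap\delta^{-1}(M^{n+2})=Q^{n+1}$ and $\delta(P^n)=\delta M^n\subseteq P^{n+1}$, so both $Q^\bullet$ and $P^\bullet$ are subcomplexes of $\Eb$ with $Q^\bullet\subseteq P^\bullet$. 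Unwinding the definitions of $\S$\ref{SS:deccoh} gives $H^{-1,n}=Q^n$ and $H^{0,n}=E^{n+1}/P^{n+1}$, and the natural differentials on $\mcH^{-1}$ and $\mcH^{0}$ are those induced by $\delta$; this identifies $\mcH^{-1}$ with the subcomplex $Q^\bullet\subseteq\Eb$ and $\mcH^{0}[-1]$ with the quotient complex $\Eb/P^\bullet$.

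Given these identifications, the short exact sequence of complexes
\[
0\to Q^\bullet \to \Eb \to \Eb/Q^\bullet \to 0
\]
yields a long exact sequence
\[
\cdots \to H^n(\mcH^{-1})\to H^n(\Eb)\to H^n(\Eb/Q^\bullet)\torel{\partial} H^{n+1}(\mcH^{-1})\to\cdots,
\]
and to match the statement it remains to identify $H^n(\Eb/Q^\bullet)$ with $H^{n-1}(\mcH^{0})=H^n(\Eb/P^\bullet)$. Since $Q\subseteq P$, the projection $\Eb/Q^\bullet\twoheadrightarrow \Eb/P^\bullet$ has kernel the complex $P^\bullet/Q^\bullet$, so this reduces to showing that $P^\bullet/Q^\bullet$ is acyclic.

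The acyclicity of $P^\bullet/Q^\bullet$ is the one step that requires a genuine (if brief) computation and is where I expect the main obstacle to lie. The argument is that any cocycle of $(P/Q)^n$ is represented by an element of the form $p=m+\delta m'$ with $m\in M^n$ and $m'\in M^{n-1}$, and the cocycle condition says $\delta p=\delta m\in Q^{n+1}\subseteq M^{n+1}$, which by the definition of $Q^n$ forces $m\in Q^n$; hence $[p]=[\delta m']=\delta[m']$ in $P^\bullet/Q^\bullet$, so every cocycle is a coboundary. In a general abelian category this has to be phrased via subobjects, images, and kernels rather than elements, but the content is the same. Once acyclicity is established, $\Eb/Q^\bullet\to\Eb/P^\bullet$ is a quasi-isomorphism, and substitution into the long exact sequence above recovers exactly the sequence in the proposition, with $\partial$ being the connecting homomorphism of the snake lemma.
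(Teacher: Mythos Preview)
Your argument is correct. Your identifications $Q^\bullet=\mcH^{-1}$ and $\Eb/P^\bullet=\mcH^0[-1]$ are exactly right, and the acyclicity of $P^\bullet/Q^\bullet$ follows from the elementwise reasoning you give (which translates without difficulty into subobject language in a general abelian category).

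The paper actually offers two proofs. Its primary argument is different from yours: it equips $\Eb$ with the filtration $F^nE=M^n\oplus\bigoplus_{i>n}E^i$ and reads off the long exact sequence from the associated spectral sequence, whose $E_2$-page is precisely $\mcH^{-1}\cup\mcH^0$, whose $d_2$ differentials are the maps $\partial$, and which degenerates at $E_3$ for degree reasons. Your approach, by contrast, is exactly the paper's stated \emph{alternative} proof, namely that the natural map $\Eb/\mcH^{-1}\to\mcH^0[-1]$ is a quasi-isomorphism; the paper mentions this in one line without carrying out the computation, so you have supplied the details. The spectral-sequence route has the virtue of producing $\partial$ intrinsically as a page differential, while your route is more elementary and avoids any spectral-sequence machinery.
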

\begin{proof}
   For simplicity, we denote $\Eb$ by $E$. We put a filtration on
   the complex $E$ by setting
       $$F^nE=M^n\oplus\bigoplus_{n<i}E^i.$$
   This gives rise to a spectral sequence whose second page is exactly
   the union of the complexes $\mcH^{-1}$ and $\mcH^{0}$. The
   differentials of the third page are  the morphisms
   $H^{n-1}(\mcH^{0})\llra{\partial}H^{n+1}(\mcH^{-1})$, and after the
   third page all the differential become zero for degree reasons.
   The fact that this spectral sequence converges to $H^n(E)$
   is equivalent to the existence of the above long exact sequence.

   An  alternative proof can be obtained by showing that the natural
   map of complexes $E/\mcH^{-1}\to\mcH^0[-1]$ is a
   quasi-isomorphism.
\end{proof}

 We define two classes of morphisms $\sis$ and $\qis$ in
$\Dec(\A)$. The former is the class of all morphisms in $\Dec(\A)$
which induce isomorphisms on all $H^{i,n}$,  $i=-1,0$, $n \in \bbZ$.
The latter is the class of all quasi-isomorphisms, that is, all
morphisms in $\Dec(\A)$ which induce isomorphisms on the usual
cohomologies $H^n$.

\begin{prop}{\label{P:sis}}
  We have $\sis \subset \qis$.
\end{prop}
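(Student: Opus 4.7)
The plan is to read everything off Proposition \ref{P:cohofcoh} together with the five lemma. Suppose $f\:(\Eb,\Mb)\to(\Fb,\Nb)$ lies in $\sis$, so that the induced maps $H^{i,n}(f)\: H^{i,n}(\Eb,\Mb)\to H^{i,n}(\Fb,\Nb)$ are isomorphisms for $i=-1,0$ and all $n\in\bbZ$. Since the complexes $\mcH^{-1}(\Eb,\Mb)$ and $\mcH^{0}(\Eb,\Mb)$ are built termwise out of the $H^{i,n}$'s with differentials induced by $\delta$, the map $f$ induces isomorphisms of complexes $\mcH^{i}(f)\:\mcH^{i}(\Eb,\Mb)\to\mcH^{i}(\Fb,\Nb)$ for $i=-1,0$, and therefore isomorphisms on the cohomologies $H^n(\mcH^{-1})$ and $H^n(\mcH^{0})$ as well.

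Next I would invoke the long exact sequence of Proposition \ref{P:cohofcoh}. The map $f$ induces a morphism from the long exact sequence associated to $(\Eb,\Mb)$ to that of $(\Fb,\Nb)$. This morphism is an isomorphism on the $H^{\bullet}(\mcH^{-1})$ and $H^{\bullet}(\mcH^{0})$ terms by the previous paragraph. A standard diagram chase (the five lemma, applied after splicing the long exact sequence into short pieces around each $H^n(E)$) then forces $H^n(f)\: H^n(\Eb)\to H^n(\Fb)$ to be an isomorphism for all $n$, proving that $f\in\qis$.

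The only genuinely non-formal point is the naturality of the long exact sequence. This is where I would spend the most care: one needs to check that the filtration $F^n\Eb=M^n\oplus\bigoplus_{n<i}E^i$ used in the proof of Proposition \ref{P:cohofcoh} is functorial in morphisms of decorated complexes, so that $f$ induces a morphism of the associated spectral sequences. But this is immediate from the definition of a morphism of decorated complexes: the condition $f(M^n)\subseteq N^n$ is exactly what is needed to make $f$ preserve the filtration, hence to make the boundary maps $\partial$ of Proposition \ref{P:cohofcoh} natural in $f$. Once naturality is in hand, the five lemma argument above finishes the proof.
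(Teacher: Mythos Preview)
Your proof is correct and is precisely the intended argument: the paper's proof consists of the single sentence ``Follows from Proposition \ref{P:cohofcoh},'' and your write-up is a faithful unpacking of that sentence via naturality of the long exact sequence and the five lemma.
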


\begin{proof}
Follows from Proposition \ref{P:cohofcoh}.
\end{proof}

In Section \ref{S:complexes} we will see the relation between the
classes $\sis$ and $\qis$ in $\Chst(\B)$, as defined in Section
\ref{S:complexesstrict}, and the classes $\sis$ and $\qis$ defined
above.
  Presumably $\sis$ is not a localizing class in $\Dec(\A)$.
  However, its restriction to the full subcategory $\Ch(\A,\mcT,\mcF)$
  defined  in $\S$\ref{SS:dectorsion} is a localizing class.

\subsection{The cohomological functor $\bbH \: \Ch(\A) \to \B$
associated to a torsion pair}{\label{SS:cohfunc}}

 Assume now that $\A$ is equipped with a
torsion pair $(\mcT,\mcF)$. There is a cohomological functor $\bbH
\: \Ch(\A) \to \B$ obtained from the $t$-structure associated to the
torsion pair $(\mcT,\mcF)$. We give an explicit description of this
cohomological functor.

 Let $\Eb$ be a complex in $\Ch(\A)$. We
define $A^n$, $n\in \bbZ$, to be the subobject of $E^n$ such that
$$\im(E^{n-1} \torel{\delta} E^{n}) \subseteq
          A^n \subseteq \ker(E^n \torel{\delta} E^{n+1}),$$
and
        $$A^n /\im(E^{n-1} \torel{\delta} E^{n}) \in \mcT, \ \
          \text{and}  \
              \ker(E^n \torel{\delta} E^{n+1})/A^n \in \mcF.$$
These properties uniquely determine $A^n$.

 For a chain complex
$\Eb$ in $\Ch(\A)$ we define
  $$\bbH^n(\Eb):=[E^n/A^n \llra{\delta} A^{n+1}] \in \B.$$

 \begin{prop}{\label{P:cohomology}}
    Let $\Eb$ be a chain complex in $\A$. Then, for every $n$, there
    is a short exact sequence
       $$ 0 \to H^0(\bbH^{n-1}(\Eb)) \to H^n(\Eb) \to
       H^{-1}(\bbH^n(\Eb)) \to 0.$$
   Note that $H^0(\bbH^{n-1}(\Eb)) \in \mcT$ and $H^{-1}(\bbH^n(\Eb))
   \in \mcF$.
 \end{prop}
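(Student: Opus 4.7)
My plan is to reduce the proposition to the standard short exact sequence relating a subobject and a quotient, by unpacking what $H^{-1}$ and $H^0$ mean for objects of $\B$ of the form $[X^{-1} \to X^0]$ and plugging in the definition of $\bbH^n(\Eb) = [E^n/A^n \to A^{n+1}]$.

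First, I would observe that for any object $\bbX = [X^{-1} \torel{d} X^0]$ of $\B$, viewed as a complex concentrated in degrees $-1, 0$, we have $H^{-1}(\bbX) = \ker d$ and $H^0(\bbX) = \coker d$. Applied to $\bbH^n(\Eb) = [E^n/A^n \torel{\delta} A^{n+1}]$, the map $[x] \mapsto \delta(x)$ has kernel equal to $\ker(\delta^n \: E^n\to E^{n+1})/A^n$, so
$$H^{-1}(\bbH^n(\Eb)) = \ker\delta^n / A^n.$$
Similarly, the image of $E^{n-1}/A^{n-1} \to A^n$ equals $\im(\delta^{n-1})$ (which is contained in $A^n$ by construction), so
$$H^0(\bbH^{n-1}(\Eb)) = A^n / \im\delta^{n-1}.$$

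Second, the two membership claims $H^0(\bbH^{n-1}(\Eb))\in\mcT$ and $H^{-1}(\bbH^n(\Eb))\in\mcF$ are then immediate from the defining properties of $A^n$ stated just before the proposition: $A^n/\im\delta^{n-1}\in\mcT$ and $\ker\delta^n/A^n\in\mcF$.

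Third, for the short exact sequence, I would note that since $\im\delta^{n-1}\subseteq A^n \subseteq \ker\delta^n$, the standard ``third isomorphism theorem'' sequence
$$0 \to A^n/\im\delta^{n-1} \to \ker\delta^n/\im\delta^{n-1} \to \ker\delta^n/A^n \to 0$$
is exact in $\A$, and its middle term is exactly $H^n(\Eb)$. Substituting the identifications above gives the desired sequence. There is essentially no obstacle here; the entire content is bookkeeping of subquotients, and the torsion-pair hypotheses enter only through the defining properties of $A^n$ used to certify membership in $\mcT$ and $\mcF$.
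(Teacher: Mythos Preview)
Your proof is correct and is exactly the unpacking of the definition that the paper has in mind; the paper's own proof simply reads ``This follows immediately from the definition of $\bbH^n(\Eb)$.'' Your identifications $H^{-1}(\bbH^n(\Eb))=\ker\delta^n/A^n$ and $H^0(\bbH^{n-1}(\Eb))=A^n/\im\delta^{n-1}$, together with the third-isomorphism sequence for $\im\delta^{n-1}\subseteq A^n\subseteq\ker\delta^n$, are precisely the content hidden in that one line.
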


\begin{proof} This follows immediately from the definition of
  $\bbH^n(\Eb)$.
\end{proof}

To have a better grasp of the cohomological functor $\bbH$, it is
perhaps useful to consider the two extreme cases in which
$\mcT=\{0\}$ or $\mcF=\{0\}$. In both cases, we have a  natural
identification $\B=\A$. In the first case, $\bbH^n=H^n$, and in the
second case $\bbH^n=H^{n+1}$. For a general  torsion pair, $\bbH^n$
has a little bit of $H^n$ and a little bit of $H^{n+1}$, as we saw
in Proposition \ref{P:cohomology}.

The next corollary follows immediately from Proposition
\ref{P:cohomology}.

\begin{cor}{\label{C:qis}}
  A morphism $\Eb \to \Fb$ in $\Ch(\A)$
  is a quasi-isomorphism if and only if it induces
  isomorphisms on all $\bbH^n$.
\end{cor}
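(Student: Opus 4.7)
The plan is to deduce both directions from the short exact sequence of Proposition \ref{P:cohomology}, which for each $n$ reads
\[
0 \to H^0(\bbH^{n-1}(\Eb)) \to H^n(\Eb) \to H^{-1}(\bbH^n(\Eb)) \to 0,
\]
together with the characterization of isomorphisms in $\B$ already in hand from Corollary \ref{C:isom} and Proposition \ref{P:long}.

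For the forward direction, I would assume that every $\bbH^n(\Eb)\to\bbH^n(\Fb)$ is an isomorphism in $\B$. Then in particular the induced maps on the $H^0$ and the $H^{-1}$ of the $\bbH^n$'s are isomorphisms, so the five lemma applied to the naturality square for the displayed short exact sequence gives that $H^n(\Eb)\to H^n(\Fb)$ is an isomorphism, as desired.

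For the reverse direction, the key observation I would use is that the above short exact sequence is forced to coincide with the canonical torsion decomposition of $H^n(\Eb)$ with respect to $(\mcT,\mcF)$, because its left term lies in $\mcT$, its right term lies in $\mcF$, and such a decomposition is unique. Hence a quasi-isomorphism $\Eb\to\Fb$ will induce isomorphisms on $H^0(\bbH^{n-1})$ and $H^{-1}(\bbH^n)$ for every $n$, by functoriality of the torsion decomposition. It then remains to show that a morphism $P$ in $\B$ inducing isomorphisms on both $H^{-1}$ and $H^0$ is itself an isomorphism; I would read this off Proposition \ref{P:long} by chasing the long exact sequence to force $H^k(C(P))=0$ for $k=-2,-1,0$, whence Corollary \ref{C:isom} yields that $P$ is an isomorphism in $\B$.

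The main (minor) obstacle is verifying that the short exact sequence of Proposition \ref{P:cohomology} is natural in $\Eb$, so that both the five lemma and the uniqueness-of-torsion-decomposition arguments apply cleanly; this should follow directly from the evidently functorial construction of the subobjects $A^n$ given in $\S$\ref{SS:cohfunc}.
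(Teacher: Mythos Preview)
Your argument is correct and is essentially an unpacking of the paper's one-line proof, which simply says the corollary follows immediately from Proposition~\ref{P:cohomology}. Your two directions (five lemma plus uniqueness of the torsion decomposition, then Proposition~\ref{P:long} and Corollary~\ref{C:isom} to recognize an isomorphism in $\B$ from isomorphisms on $H^{-1}$ and $H^0$) are exactly the details one has to supply, and the naturality of the short exact sequence is indeed immediate from the functoriality of the torsion decomposition used to define the $A^n$.
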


\subsection{Decorations in the presence of a torsion
pair}{\label{SS:dectorsion}

 Let $(\Eb,\Mb)$ be a decorated complex. We say that the decoration
is {\bf compatible} with the torsion pair $(\mcT,\mcF)$ if the
following condition is satisfied:

\begin{itemize}
  \item[$\blacktriangleright$] For every $n$, $H^{-1,n}(\Eb,\Mb) \in \mcF$
    and $H^{0,n}(\Eb,\Mb) \in \mcT$.
\end{itemize}

The decorated complexes whose decoration is compatible with the
torsion pair form a full subcategory of $\Dec(\A)$ which we denote
by $\Ch(\A,\mcT,\mcF)$. For an object $(\Eb,\Mb)$ in
$\Ch(\A,\mcT,\mcF)$ we define $\bbH^n(\Eb,\Mb)=\bbH^n(\Eb)$.

If we intersect the classes $\sis, \qis \subset \Dec(\A)$ defined in
$\S$\ref{SS:deccoh} with the subcategory $\Ch(\A,\mcT,\mcF)$, we
obtain two classes of morphisms in $\Ch(\A,\mcT,\mcF)$, for which we
use the same notation. The class $\sis$ is a localizing class. This
follows, for example, from Theorem \ref{T:main} below.

We will see in Section \ref{S:complexes}  that there exists a
natural equivalence of categories $\Chst(\B) \risom
\Ch(\A,\mcT,\mcF)$ under which the classes $\sis, \qis \subset
\Chst(\B)$ defined in Section \ref{S:complexesstrict} exactly
correspond to the classes $\sis, \qis \subset \Ch(\A,\mcT,\mcF)$
defined above.

\subsection{Homological algebra in $\Dec(\A)$}{\label{SS:homological}
  The category $\Dec(\A)$ should be thought of as a generalization
  of the category $\Ch(\A)$ in the sense that we can do homological
  algebra in $\Dec(\A)$. This means, the usual notions of
  homological algebra (such as, {\em mapping cylinder} of a
  morphism, {\em mapping cone} of a morphism, {\em chain homotopy}
  between morphisms, and so on) can be defined in $\Dec(\A)$.

 Let us show how mapping cylinders are defined in $\Dec(\A)$.
 (Essentially, all other definitions  can be formally reduced to
 this one.) Let $f \: (\Eb,\Mb) \to  (\Fb,\Nb)$ be a morphism in
 $\Dec(\A)$ and denote $\Eb \to \Fb$ by $\overline{f}$. We define
 $\Cyl(f)$ to be the usual mapping cylinder
 $\Cyl(\overline{f})=\Eb\oplus E^{\bullet+1}\oplus \Fb$, endowed
 with the direct sum of the decorations of its components. We have
 natural morphisms $(\Eb,\Mb) \hookrightarrow \Cyl(f)$ and
 $(\Fb,\Nb) \hookrightarrow \Cyl(f)$ in $\Dec(\A)$. In the case
 where $f$ is the identity, $\Cyl(f)$ is the cylinder of
 $(\Eb,\Mb)$, which can be used to define decorated chain
 homotopies. The quotient $\Cone(f):=\Cyl(f)/(\Eb,\Mb) \in \Dec(\A)$
 is the mapping cone of $f$, and so on.

\begin{lem}{\label{L:cone}}
 Let $f \:  (\Eb,\Mb) \to  (\Fb,\Nb)$ be a morphism
  in $\Dec(\A)$, and let $\Cone(f)$ be its decorated cone
  as defined above. Then, we have the  isomorphisms
    $$H^{i,n}(\Cone(f))\cong H^{i,n+1}(\Eb,\Mb)\oplus
    H^{i,n}(\Fb,\Nb), \ \ \ i=-1,0.$$
\end{lem}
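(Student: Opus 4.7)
The plan is a direct computation from the definitions. First I would unwind $\Cone(f)$: by the construction in $\S$\ref{SS:homological}, $\Cyl(f) = E^\bullet \oplus E^{\bullet+1} \oplus F^\bullet$ with the direct-sum decoration, and $\Cone(f) = \Cyl(f)/(E^\bullet,M^\bullet)$. Thus in degree $k$ one has $\Cone(f)^k = E^{k+1}\oplus F^k$ with decoration $M^{k+1}\oplus N^k$, and the usual mapping-cone differential $d(e,g) = (-\delta e,\, f(e) + \delta g)$.

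For the case $i=-1$, the quantity $H^{-1,k}(\Cone(f))$ is by definition the kernel of the induced map $M^{k+1}\oplus N^k \to (E^{k+2}\oplus F^{k+1})/(M^{k+2}\oplus N^{k+1})$. A pair $(m,n)$ lies in this kernel precisely when $\delta m \in M^{k+2}$ and $f(m)+\delta n \in N^{k+1}$. Since $f$ is a morphism of decorated complexes, $f(m) \in N^{k+1}$ automatically, so the second condition reduces to $\delta n \in N^{k+1}$. The two conditions decouple into the defining conditions for $H^{-1,k+1}(E^\bullet,M^\bullet)$ and $H^{-1,k}(F^\bullet,N^\bullet)$, giving the claimed direct sum. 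For $i=0$, the same observation $f(M^{k+1}) \subseteq N^{k+1}$ shows that the image of $d$ inside $(E^{k+2}/M^{k+2})\oplus(F^{k+1}/N^{k+1})$ is exactly the subobject $\delta M^{k+1} \oplus \delta N^k$, and taking cokernels splits as a direct sum into $H^{0,k+1}(E^\bullet,M^\bullet) \oplus H^{0,k}(F^\bullet,N^\bullet)$.

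The only substantive point — and the source of the decomposition — is that $f$ being a morphism of decorated complexes forces the cross-term $f(m)$ in the cone differential to vanish modulo the decoration on the $F$-factor; without this the two components would be linked and no direct sum decomposition would be possible. Beyond noticing this, the proof is essentially a bookkeeping verification, so I do not anticipate any genuine obstacle.
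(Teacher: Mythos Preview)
Your proof is correct and is exactly the direct computation the paper has in mind; the paper's own proof consists of the single word ``Straightforward.'' One small notational quibble: you use $n$ both for the cohomological index and for an element of $N^k$, which is mildly confusing but does not affect the argument.
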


\begin{proof}
 Straightforward.
\end{proof}

\begin{rem}
 Decorated homotopies do not necessarily induce isomorphisms on
 $H^{i,n}$, but they induce chain homotopies on the level of complexes
 $\mcH^{-1}$ and $\mcH^0$.
\end{rem}

   Passing to decorated chain homotopy classes of morphisms in
   $\Dec(\A)$ we obtain the {\bf homotopy category} $\KDec(\A)$ of
   decorated chain complexes. This is a triangulated category with
   the usual shift functor. Doing the same with $\Ch(\A,\mcT,\mcF)$ we
   obtain a full triangulated subcategory of $\KDec(\A)$ which we
   denote by $\sfK(\A,\mcT,\mcF)$. (Here, we have used the fact that
   the cone of a morphism in $\Ch(\A,\mcT,\mcF)$ is again in
   $\Ch(\A,\mcT,\mcF)$; see Lemma \ref{L:cone} above.)

   The class $\qis \subseteq \KDec(\A)$ is a localizing class, and so is its
   intersection with $\sfK(\A,\mcT,\mcF)$, for which we use the
   same notation $\qis$. This is because both are defined
   by a cohomological functor. More precisely, we are using the
   following.

   \begin{lem}[\oldcite{Weibel}, Proposition 10.4.1]{\label{L:Weibel}}
      Let $\sfK$ be a triangulated category, and let $\mcS$ be
      the class of quasi-isomorphisms with respect to a certain
      cohomological functor. Then $\mcS$ is a localizing class.
   \end{lem}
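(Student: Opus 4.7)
The plan is to build everything on one equivalence: $f \in \mcS$ if and only if the cone $C(f)$ is $H$-acyclic, meaning $H(C(f)[n]) = 0$ for every $n \in \bbZ$. This is immediate from the long exact cohomology sequence associated to the distinguished triangle $X \to Y \to C(f) \to X[1]$, since $f$ induces isomorphisms on all $H(-[n])$ iff the terms contributed by $C(f)$ vanish. The full subcategory $\mcN \subseteq \sfK$ of $H$-acyclic objects is thick: it is closed under shifts, and the long exact sequence gives the two-out-of-three principle along distinguished triangles.

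With this reformulation, each axiom of a localizing class reduces to a routine application of the octahedral axiom. Closure under composition: for composable $f, g \in \mcS$, the octahedron built on $(f, g, gf)$ yields a distinguished triangle $C(f) \to C(gf) \to C(g) \to C(f)[1]$ whose outer terms lie in $\mcN$, forcing $C(gf) \in \mcN$, so $gf \in \mcS$. Shifts preserve $\mcS$ because they preserve cones. For the Ore condition, given $f : X \to Y$ and $s : Z \to Y$ with $s \in \mcS$, embed $s$ in a triangle with cone $C(s) \in \mcN$ and form the homotopy fiber product to produce $t : W \to X$ and $g : W \to Z$ with $ft = sg$, such that $C(t)$ is isomorphic (up to shift) to $C(s)$ and hence lies in $\mcN$; the dual construction gives the other Ore condition. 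Cancellation: if $fs = 0$ for some $s \in \mcS$, then $f$ factors through $C(s)[-1] \in \mcN$, and a symmetric octahedron produces $t \in \mcS$ with $tf = 0$.

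The conceptual point is that $\mcS$ is completely controlled by the triangulated subcategory $\mcN$, which makes every step transparent. The only obstacle is bookkeeping: one must keep careful track of which new edge produced by each octahedron carries the $H$-acyclic cone, and verify compatibility with the triangulation so that the Verdier quotient $\mcS^{-1}\sfK$ inherits a triangulated structure. Since precisely this verification is carried out as \oldcite{Weibel}, Proposition 10.4.1, I would cite that reference rather than reproduce the full case analysis.
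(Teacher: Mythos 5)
Your proposal is correct and matches the paper, which simply cites \cite{Weibel}, Proposition~10.4.1, without supplying an argument; the sketch you give (recasting $\mcS$ as the class of morphisms whose cones lie in the thick subcategory of $H$-acyclic objects, then running the octahedral axiom to check composition, the Ore conditions, and cancellation) is precisely the content of that reference. One small slip: in the cancellation step, $fs=0$ means $f$ factors through $C(s)$, not $C(s)[-1]$; the desired $t$ with $tf=0$ is then the cone map on that factorization, whose cone is $C(s)[1]\in\mcN$.
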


  As in classical homological algebra, the fact that $\qis$ becomes a
  localizing class is very useful. Note that
  $$\qis^{-1}\KDec(\A)\cong \qis^{-1}\Dec(\A) \ \ \ \text{and} \ \ \ \
  \qis^{-1}\sfK(\A,\mcT,\mcF)\cong \qis^{-1}\Ch(\A,\mcT,\mcF).$$

  There are two other triangulated subcategories of $\sfK(\A,\mcT,\mcF)$
  that are less important for us (they will only be used in the proof
  of Theorem \ref{T:HRS}). We will end this section by giving their
  definitions. The first category, denoted $\sfF(\A,\mcT,\mcF)$,
  is the full subcategory of $\sfK(\A,\mcT,\mcF)$ consisting of
  decorated complexes $(\Eb,\Mb)$ such that $\Eb$ is a complex of
  free objects, i.e., $E^n \in \mcF$ for all $n$. This is a
  triangulated category because the cone of morphism between two
  complexes of free objects is again a complex of free objects.

  The category $\sfF(\A,\mcT,\mcF)$ contains a subcategory
  $\sfFF(\A,\mcT,\mcF)$ consisting of the complexes with the full
  decoration. We will need the following lemmas for the proof of
  Theorem \ref{T:HRS}.

  \begin{lem}{\label{L:equivalence}}
   Let $\sfK$ be a triangulated category, and let $\mcS$ a
   localizing class in $\sfK$ that is defined by a cohomological
   functor. Let $\sfF$ be a full triangulated subcategory of $\sfK$,
   and set $\mcS_{\sfF}=\mcS\cap\sfF$. Assume either
   of the following holds:
  \begin{itemize}
   \item[$\triangleright$] For every $X \in \sfK$,
      there exists a quasi-isomorphism $F \to X$ with $F \in \sfF$;
   \item[$\triangleright$] For every $X \in \sfK$,
      there exists a quasi-isomorphism $X \to F$ with $F \in \sfF$.
  \end{itemize}
   Then the functor $\mcS_{\sfF}^{-1}\sfF \to
   \mcS^{-1}\sfK$ is an equivalence of triangulated
   categories.
  \end{lem}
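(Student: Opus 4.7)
The plan is to show the inclusion-induced functor $\iota \colon \mcS_{\sfF}^{-1}\sfF \to \mcS^{-1}\sfK$ is fully faithful and essentially surjective; since $\sfF \hookrightarrow \sfK$ is an exact inclusion of triangulated categories sending $\mcS_\sfF$ into $\mcS$, the functor $\iota$ is automatically triangulated (it is the unique triangulated descent of the composite $\sfF \hookrightarrow \sfK \to \mcS^{-1}\sfK$), so an equivalence of plain categories upgrades to an equivalence of triangulated categories. I work under the first hypothesis; the second case is entirely dual, obtained by reversing the orientation of all roofs. The whole argument rests on the calculus of fractions for $\mcS$ in $\sfK$, available by Lemma~\ref{L:Weibel}.

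Essential surjectivity is immediate: for $X \in \sfK$, the hypothesis provides a quasi-isomorphism $F \to X$ with $F \in \sfF$, which becomes an isomorphism in $\mcS^{-1}\sfK$, hence $X \cong \iota(F)$. For fullness, a morphism in $\mcS^{-1}\sfK$ from $F$ to $F'$ (with $F, F' \in \sfF$) is represented by a roof $F \xleftarrow{s} Z \xrightarrow{f} F'$ with $s \in \mcS$. Applying the hypothesis to $Z$ yields $g \colon F'' \to Z$ with $F'' \in \sfF$ and $g \in \mcS$; then $s \circ g \in \mcS_\sfF$, and the roof $F \xleftarrow{sg} F'' \xrightarrow{fg} F'$ sits entirely inside $\sfF$ and represents the same morphism.

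For faithfulness, suppose two roofs in $\sfF$ become equal in $\mcS^{-1}\sfK$. By the equivalence relation on roofs, there exist $W \in \sfK$ together with $\mcS$-maps $W \to Z$ and $W \to Z'$ that equalise the two roofs. One further application of the hypothesis produces a quasi-isomorphism $F''' \to W$ with $F''' \in \sfF$; the composites $F''' \to Z$ and $F''' \to Z'$ then lie in $\mcS_\sfF$ and witness the equality inside $\sfF$. The step I expect to be the main (though mild) obstacle is the bookkeeping confirmation that $\mcS_\sfF$ itself admits a calculus of fractions in $\sfF$: concretely, the Ore condition for $\mcS_\sfF$ must be verified by one application of Ore for $\mcS$ in $\sfK$ followed by one application of the resolution hypothesis to drag the completing object back into $\sfF$, and one must check that this reduction is compatible with composition of roofs. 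Once this is in place, fullness and faithfulness follow mechanically from their $\sfK$-counterparts.
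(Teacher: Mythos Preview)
Your argument is correct and is essentially the standard calculus-of-fractions proof that the paper obtains by citing \cite{GeMa}, Proposition III.2.10; the paper's own proof is just ``essential surjectivity is obvious, and full faithfulness is Gelfand--Manin.'' The one place you make more work for yourself than necessary is the verification that $\mcS_{\sfF}$ admits a calculus of fractions: since $\mcS$ is defined by a cohomological functor $H$ on $\sfK$, its restriction $\mcS_{\sfF}$ is defined by the restriction of $H$ to the triangulated subcategory $\sfF$, so Lemma~\ref{L:Weibel} applies directly and $\mcS_{\sfF}$ is automatically localizing---no need to bootstrap the Ore condition from $\sfK$ by hand.
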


\begin{proof}
   Essential surjectivity is obvious, so it is enough to show that
   $\sfF$ is a localizing subcategory of $\sfK$. This follows from
  \cite{GeMa}, Proposition III.2.10 (page 151). (Note that, by Lemma
  \ref{L:Weibel}, $\mcS_{\sfF}$ is automatically a
   localizing class.)
\end{proof}

  \begin{lem}{\label{L:full}}
      The inclusion $\sfFF(\A,\mcT,\mcF) \subset \sfF(\A,\mcT,\mcF)$
      induces an equivalence of triangulated categories
        $$\qis^{-1}\sfFF(\A,\mcT,\mcF) \cong
                         \qis^{-1}\sfF(\A,\mcT,\mcF).$$
  \end{lem}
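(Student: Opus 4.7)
The plan is to apply Lemma \ref{L:equivalence} directly, with $\sfK = \sfF(\A,\mcT,\mcF)$, the ambient triangulated category being $\sfF(\A,\mcT,\mcF)$; $\sfF$ in the statement of the lemma being $\sfFF(\A,\mcT,\mcF)$; and $\mcS = \qis$. Note that $\qis$ is a localizing class by Lemma \ref{L:Weibel}, since it is defined by the cohomological functor $H^n$. What I need to verify is one of the two hypotheses of Lemma \ref{L:equivalence}, namely that every object of $\sfF(\A,\mcT,\mcF)$ admits a quasi-isomorphism (in one of the two directions) to an object of $\sfFF(\A,\mcT,\mcF)$.

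The construction is immediate: given a decorated complex $(\Eb,\Mb) \in \sfF(\A,\mcT,\mcF)$, consider the fully decorated complex $(\Eb,\Eb)$ and the morphism
$$\id_{\Eb}\: (\Eb,\Mb) \lra (\Eb,\Eb)$$
in $\Dec(\A)$. This is a morphism of decorated complexes because the identity sends $M^n \subseteq E^n$ into $E^n$, and its underlying map of complexes is $\id_{\Eb}$, which is certainly a quasi-isomorphism on $H^n$.

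I must still check that the target $(\Eb,\Eb)$ actually lies in $\sfFF(\A,\mcT,\mcF)$, i.e.\ that the full decoration is compatible with the torsion pair. Since $(\Eb,\Mb) \in \sfF(\A,\mcT,\mcF)$, each $E^n$ lies in $\mcF$. Computing from the definition in $\S$\ref{SS:deccoh},
$$H^{-1,n}(\Eb,\Eb) = \ker(E^n \to E^{n+1}/E^{n+1}) = E^n \in \mcF,$$
$$H^{0,n}(\Eb,\Eb) = \coker(E^n \to 0) = 0 \in \mcT,$$
so the compatibility condition of $\S$\ref{SS:dectorsion} holds. Hence $(\Eb,\Eb) \in \sfFF(\A,\mcT,\mcF)$, and the second hypothesis of Lemma \ref{L:equivalence} is satisfied.

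There is essentially no obstacle here: the only subtle point is recognising that ``full decoration'' is automatically compatible with the torsion pair once $E^n \in \mcF$, which is a direct calculation of the two cohomologies. With that in hand, Lemma \ref{L:equivalence} gives the desired equivalence $\qis^{-1}\sfFF(\A,\mcT,\mcF) \cong \qis^{-1}\sfF(\A,\mcT,\mcF)$ of triangulated categories.
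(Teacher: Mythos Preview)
Your proof is correct and follows exactly the same approach as the paper's: construct the quasi-isomorphism $(\Eb,\Mb)\to(\Eb,\Eb)$ and invoke the second case of Lemma~\ref{L:equivalence}. You have simply been more careful than the paper in verifying that $(\Eb,\Eb)$ lies in $\sfFF(\A,\mcT,\mcF)$, which is a welcome addition since the compatibility of the full decoration does rely on each $E^n$ lying in $\mcF$.
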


  \begin{proof}
    For   every $(\Eb,\Mb)$
     in $\sfF(\A,\mcT,\mcF)$ we have a quasi-isomorphism
     $(\Eb,\Mb) \to (\Eb,\Eb)$, where $(\Eb,\Eb) \in
     \sfFF(\A,\mcT,\mcF)$. The result follows from the second
     case of Lemma \ref{L:equivalence}.
  \end{proof}

  Dually, there are  triangulated subcategories
     $$\sfZT(\A,\mcT,\mcF)
       \subset \sfT(\A,\mcT,\mcF) \subset \sfK(\A,\mcT,\mcF),$$
   where $\sfT(\A,\mcT,\mcF)$ consists of
   decorated complexes of torsion objects and
  $\sfZT(\A,\mcT,\mcF)$ is its full subcategory consisting of
  complexes with zero decoration. The above discussion applies
  to these categories as well.

\begin{rem}{\label{L:bounded}}
  The discussion of this subsection applies to the case where
  the complexes are bounded (above, below, or both).
\end{rem}

\section{Complexes in $\B$ and the derived category $\mcD(\B)$}
{\label{S:complexes}}

In this section we give an alternative description of $\Chst(\B)$;
see Theorem \ref{T:main}. This, together with Proposition
\ref{P:derived} enables us to give a simple description for the
derived category $\mcD(\B)$.

\subsection{Description of $\Ch(\B)$ via decorated complexes}
{\label{SS:main}}

\begin{thm}{\label{T:main}}
  There is an equivalence of categories
  $$\Tot \: \Chst(\B) \risom \Ch(\A,\mcT,\mcF).$$
  Under this equivalence, the images of
  $\sis, \qis \subset \Chst(\B)$
  are exactly $\sis, \qis \subset \Ch(\A,\mcT,\mcF)$
\end{thm}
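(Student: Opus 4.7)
The plan is to package the butterfly diagrams representing the (not necessarily strict) differentials of a complex $\bfX \in \Chst(\B)$ into a decorated complex in $\A$. Writing each $d^n \: \bbX^n \to \bbX^{n+1}$ via a butterfly of middle $E^n$, NE-SW short exact sequence $0 \to X^{n+1,-1} \xrightarrow{\iota_n} E^n \xrightarrow{\sigma_n} X^{n,0} \to 0$, and diagonal maps $\kappa_n, \rho_n$, I set $\Tot(\bfX)^n := E^n$ with decoration $M^n := \iota_n(X^{n+1,-1})$. The complex identity $d^{n+1} d^n = 0$ in $\B$, together with the uniqueness of butterfly isomorphisms (the remark in $\S\ref{SS:definition}$), pins down a differential $\partial^n \: E^n \to E^{n+1}$ characterized by $\partial^n \iota_n = \kappa_{n+1}$ and $\sigma_{n+1}\partial^n = \rho_n$, satisfying $(\partial^n)^2 = 0$. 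The butterfly axioms then identify $H^{-1,n}(\Tot(\bfX)) = \ker d^v_{n+1} \in \mcF$ and $H^{0,n}(\Tot(\bfX)) = \coker d^v_{n+1} \in \mcT$, placing $\Tot(\bfX)$ in $\Ch(\A,\mcT,\mcF)$.

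For full faithfulness, a strict chain map $(g_n^{-1},g_n^0) \: \bfX \to \bfY$ induces a decoration-preserving chain map on the $\Tot$'s via the strict-composition formulas of $\S\ref{SS:compose}$ applied to the butterflies.  Conversely, any decoration-preserving chain map $\phi^\bullet \: E^\bullet_\bfX \to E^\bullet_\bfY$ restricts to $M^n_\bfX \to M^n_\bfY$ (giving $g_n^{-1}$) and descends to $E^n_\bfX/M^n_\bfX \to E^n_\bfY/M^n_\bfY$ (giving $g_n^0$); the chain-map identity then translates into the commuting squares defining strict morphisms together with compatibility with horizontal differentials.  The main delicacy I expect here is ruling out ``twist'' cross-terms $h_n \: X^{n+1,-1} \to Y^{n,0}$ that could a priori appear in $\phi^n$: these are precisely forbidden by the decoration condition $\phi^n(M^n_\bfX) \subseteq M^n_\bfY$, since any such $h_n$ would carry $\iota_n$-images outside of the corresponding $\iota_n$-image in $\bfY$.

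For essential surjectivity, I send $(E^\bullet, M^\bullet) \in \Ch(\A, \mcT, \mcF)$ to the complex with $\bbX^n := [M^{n-1} \xrightarrow{\bar\partial} E^n/M^n]$ (which lies in $\B$ by compatibility, since its kernel and cokernel are $H^{-1,n-1}$ and $H^{0,n-1}$) and each $d^n \: \bbX^n \to \bbX^{n+1}$ represented by the butterfly of middle $E^n$ with the canonical short exact sequence $0 \to M^n \hookrightarrow E^n \twoheadrightarrow E^n/M^n \to 0$, $\kappa_n = \partial|_{M^{n-1}}$, and $\rho_n$ the reduction of $\partial$ modulo $M^{n+1}$.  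The butterfly axioms reduce immediately to the identities $\sigma_n \kappa_n = d^v_n$, $\rho_n \iota_n = d^v_{n+1}$, and $\rho_n \kappa_n = \partial^2 = 0$, while the complex condition $d^{n+1} d^n = 0$ in $\B$ follows from $\partial^2 = 0$ via the fiber-product-modulo formula of $\S\ref{SS:compose}$.  By construction, $\Tot$ of this complex returns $(E^\bullet, M^\bullet)$.

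Finally, a morphism is in $\sis \subset \Chst(\B)$ iff each $\bbX^n \to \bbY^n$ is an isomorphism in $\B$, iff (by Corollary \ref{C:isom}) it induces isomorphisms on both $\ker d^v_n$ and $\coker d^v_n$; under $\Tot$ these correspond to $H^{-1,n-1}$ and $H^{0,n-1}$ of the decorated complex, matching $\sis \subset \Ch(\A,\mcT,\mcF)$.  For $\qis$, Proposition \ref{P:cohomology} furnishes a natural short exact sequence $0 \to H^0(\bbH^{n-1}(\Eb)) \to H^n(\Eb) \to H^{-1}(\bbH^n(\Eb)) \to 0$ whose outer terms lie in $\mcT$ and $\mcF$; being the unique torsion decomposition of $H^n(\Eb)$, an iso on all $H^n(\Eb)$ is equivalent to isos on all $H^0(\bbH^n)$ and $H^{-1}(\bbH^n)$, hence (again by Corollary \ref{C:isom}) to an iso on all $\bbH^n \in \B$, giving the $\qis$ correspondence.
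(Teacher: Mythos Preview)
Your overall strategy coincides with the paper's: assemble the butterfly middles into a decorated complex, construct an explicit inverse, and read off the $\sis/\qis$ correspondence from the cohomology identifications. The arguments for essential surjectivity and for the matching of $\sis$ and $\qis$ are essentially correct.

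There is, however, a genuine gap in the construction of $\Tot$ on objects and in the full-faithfulness step, and in both places the missing ingredient is the same: the torsion-pair vanishing $\Hom(\mcT,\mcF)=0$. You assert that $d^{n+1}d^n=0$ ``pins down'' a differential $\partial^n\colon E^n\to E^{n+1}$ characterized by $\partial^n\iota_n=\kappa_{n+1}$ and $\sigma_{n+1}\partial^n=\rho_n$, invoking the uniqueness-of-butterfly-isomorphisms remark. But those two identities fix only the restriction of $\partial^n$ to $M^n$ and its projection to $E^{n+1}/M^{n+1}$; one may add any map $E^n/M^n\to M^{n+1}$ (that is, any $X^{n,0}\to X^{n+2,-1}$) without disturbing them. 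The paper handles existence and uniqueness of this ``link'' in Proposition~\ref{P:link}: once one also imposes that the horizontal sequence $X^{n,-1}\to E^n\to E^{n+1}\to X^{n+2,0}$ be a complex, the ambiguity is forced to factor through a map $H^0(\bbX^n)\to H^{-1}(\bbX^{n+2})$, which vanishes since the source lies in $\mcT$ and the target in $\mcF$. The same device is what yields $(\partial)^2=0$ (Lemma~\ref{L:simplification}).

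The identical issue recurs in your faithfulness argument. The cross-terms you rule out, $h_n\colon X^{n+1,-1}\to Y^{n,0}$, are indeed excluded by the decoration condition; but the cross-terms that actually threaten uniqueness go the other way, $E^n/M^n\to N^n$ (that is, $X^{n,0}\to Y^{n+1,-1}$), and these are \emph{not} forbidden by preserving the decoration. Nor is it automatic that the $\phi^n$ you build from $(g_n^{-1},g_n^0)$ commutes strictly with the differentials rather than only modulo $N^{n+1}$. These two points are exactly Lemmas~\ref{L:unique} and~\ref{L:zero} in the paper, and both are proved by the same $\Hom(\mcT,\mcF)=0$ trick. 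Without it, neither the well-definedness of $\Tot$ on objects nor its full faithfulness goes through.
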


Recall that $\qis \subset \Ch(\A,\mcT,\mcF)$ is the class of all
quasi-isomorphisms, that is, all morphisms $(\Eb,\Mb) \to (\Fb,\Nb)$
such that $\Eb \to \Fb$ is a quasi-isomorphism in $\Ch(\A)$. The
class $\sis \subset \qis$ consists of those morphisms $(\Eb,\Mb) \to
(\Fb,\Nb)$ that induce  isomorphisms on $\mcH^{-1}$ and $\mcH^0$;
see $\S$\ref{SS:deccoh}.

The following corollary is immediate from Theorem \ref{T:main}.

\begin{cor}{\label{C:main}}
   The functor $\Tot$ induces an equivalence of categories:
   $$\Ch(\B) \risom \sis^{-1} \Ch(\A,\mcT,\mcF),$$
   $$\mcD(\B) \risom \qis^{-1} \Ch(\A,\mcT,\mcF).$$
\end{cor}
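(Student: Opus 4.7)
The plan is to construct mutually inverse functors between $\Chst(\B)$ and $\Ch(\A,\mcT,\mcF)$. Writing each object of $\B$ as $^n\bbX=[X^{n,-1}\to X^{n,0}]$, the easier direction is $\Tot^{-1}$: given $(\Eb,\Mb)$, I set $^n\bbX:=[M^{n-1}\to E^n/M^n]$ with differential induced by $\partial$. Compatibility of the decoration with $(\mcT,\mcF)$ forces the kernel into $\mcF$ and the cokernel into $\mcT$, so $^n\bbX\in\B$. The differential $P^n\:\,^n\bbX\to\,^{n+1}\bbX$ is then the butterfly morphism with central object $E^n$, in which $\iota^n$ is the inclusion $M^n\hookrightarrow E^n$, $\sigma^n$ is the projection $E^n\twoheadrightarrow E^n/M^n$, $\kappa^n$ is the restriction of $\partial$ to $M^{n-1}\subseteq E^{n-1}$, and $\rho^n$ is $\partial\bmod M^{n+1}$. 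The butterfly axioms are routine to verify, and $P^{n+1}\circ P^n=0$ in $\B$ via the explicit section $t^n([e]):=[(e,\partial e)]$, which exhibits the composition butterfly as the zero butterfly.

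For $\Tot$ itself, given $\bfX\in\Chst(\B)$ with butterfly differentials $P^n$ centered at $E^n$ (with structural maps $\kappa^n,\iota^n,\sigma^n,\rho^n$), I set $\Tot(\bfX)^n:=E^n$ with decoration $M^n:=\iota^n(X^{n+1,-1})$. The compatibility conditions unpack to $H^{-1,n}(\Tot\bfX)$ being the kernel of the differential of $^{n+1}\bbX$ (which lies in $\mcF$) and $H^{0,n}(\Tot\bfX)$ being its cokernel (which lies in $\mcT$), both automatic from $^{n+1}\bbX\in\B$. The delicate point is defining the differential $\partial^n\:E^n\to E^{n+1}$ canonically. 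Here I invoke Corollary~\ref{C:isom} together with the uniqueness of butterfly isomorphisms noted after the definition of $\B$: the hypothesis $P^{n+1}\circ P^n=0$ supplies a unique butterfly isomorphism from the composition butterfly (centered at $\tilde E^n:=E^n\oux{X^{n+1,0}}{X^{n+1,-1}}E^{n+1}$) to the zero butterfly (centered at $X^{n,0}\oplus X^{n+2,-1}$), and therefore a canonical section $t^n\:X^{n,0}\to\tilde E^n$. For $e\in E^n$, the element $t^n(\sigma^n e)$ has a unique representative in $E^n\times_{X^{n+1,0}}E^{n+1}$ whose first coordinate is $e$, and I declare $\partial^n(e)$ to be its second coordinate; this automatically satisfies $\sigma^{n+1}\partial^n=\rho^n$ and $\partial^n\circ\iota^n=\kappa^{n+1}$.

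The natural equivalences $\Tot\circ\Tot^{-1}\cong\id$ and $\Tot^{-1}\circ\Tot\cong\id$ then follow directly: the short exact sequence of the butterfly identifies $[M^{n-1}\to E^n/M^n]\cong\,^n\bbX$, and the explicit section $t^n([e])=[(e,\partial e)]$ is recognised as the canonical one, recovering the original differential. Functoriality on strict morphisms is routine. For the comparison of classes, $\qis\subset\Chst(\B)$ corresponds to $\qis\subset\Ch(\A,\mcT,\mcF)$ via Proposition~\ref{P:cohomology} and Corollary~\ref{C:qis} (quasi-isomorphism at each $\bbH^n$ is equivalent to quasi-isomorphism of the underlying complex in $\A$), while $\sis$-morphisms, characterised as strict morphisms that become isomorphisms in $\Ch(\B)$, correspond by Corollary~\ref{C:isom} to morphisms of decorated complexes inducing isomorphisms on $\mcH^{-1}$ and $\mcH^0$. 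The principal obstacle is the construction of $\partial^n$ together with the proof of $\partial^{n+1}\circ\partial^n=0$: the uniqueness of butterfly isomorphisms handles the former, while the associativity of butterfly composition applied to the triple $P^{n+2}\circ P^{n+1}\circ P^n$ forces the necessary coherence between consecutive sections $t^n$ and $t^{n+1}$, ensuring $\partial^{n+1}\partial^n$ vanishes identically in $E^{n+2}$ rather than merely landing in $M^{n+2}$.
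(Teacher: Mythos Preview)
Your route to the differential $\partial^n$ via the section $t^n$ extracted from the unique butterfly isomorphism $\tilde E^n\cong X^{n,0}\oplus X^{n+2,-1}$ is a different mechanism from the paper's. The paper instead introduces the notion of a \emph{link} (Proposition~\ref{P:link}): a morphism $\delta\colon E^n\to E^{n+1}$ fitting into a certain commutative diagram, shown to exist uniquely whenever $P^{n+1}\circ P^n=0$. Your $\partial^n$ satisfies the link conditions and hence agrees with $\delta$, but the paper's proof that $\delta^2=0$ (Lemma~\ref{L:simplification}) is a short torsion-pair argument ($\delta^2$ factors through a map from an object of $\mcT$ to one of $\mcF$), whereas your associativity sketch, though it can be made rigorous by comparing the two bracketings of the triple-composition centre against the canonical zero butterfly, is considerably more delicate than the phrase ``forces the necessary coherence'' suggests.

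The genuine gap is in your treatment of morphisms. You write that ``functoriality on strict morphisms is routine,'' but this is where the paper does the real work. Your $\Tot$ is defined only on objects; to make it a functor, or equivalently to show that $\Tot^{-1}$ is fully faithful, one must extract from a strict morphism $h\colon\bfX\to\bfY$ in $\Chst(\B)$ a \emph{unique} chain map $f^\bullet\colon E^\bullet\to F^\bullet$ respecting the decorations. A priori the components of $h$ determine $f^n$ only on $M^n$ and modulo $N^n$, and the commutator $f^{n+1}\delta-\delta f^n$ is only known to vanish on $M^n$ and to land in $N^{n+1}$. The paper resolves both the vanishing of the commutator and the uniqueness of $f^n$ via Lemmas~\ref{L:zero} and~\ref{L:unique}, each a torsion-pair argument of the same flavour as Lemma~\ref{L:simplification}: the obstruction factors through a morphism from an object of $\mcT$ to one of $\mcF$, hence is zero. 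Without this step the natural isomorphisms $\Tot\circ\Tot^{-1}\cong\id$ and $\Tot^{-1}\circ\Tot\cong\id$ you claim are established only at the level of objects, which does not yield an equivalence of categories.
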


\begin{proof}
  Follows from Proposition \ref{P:derived}.
\end{proof}

We prove Theorem \ref{T:main} by giving a step by step of
simplification of what goes into the definition of a chain complex
in $\B$. We begin by complexes of length two.

\subsection{Complexes of length 2 in $\B$}{\label{SS:complexes1}}

Consider the morphisms $P \: \bbX \to \bbY$ and $Q\:\bbY \to \bbZ$
as in the following diagram:
  $$ \xymatrix@C=8pt@R=6pt@M=6pt{ X^{-1} \ar[rd]  \ar[dd]
             & & Y^{-1} \ar[ld]_{\iota}  \ar[rd]^{\kappa'}  \ar[dd]
             & & Z^{-1} \ar[ld]  \ar[dd]   \\
     & E \ar[ld]  \ar[rd]_{\rho}  & & F \ar[ld]^{\sigma'}  \ar[rd]  & \\
                                     X^0 & & Y^0   & & Z^0    }$$
Let $A \subseteq E$ be defined as in $\S$\ref{SS:kernel}, and let $B
\subseteq F$ be the corresponding object for the morphism $Q$.

\begin{lem}{\label{L:composition}}
The composition $Q\circ P$ is zero if and only if there is a
morphism $f \: E/A \to B$ in $\A$  making the following diagram
commute:
   $$\xymatrix@C=-6pt@R=9pt@M=6pt{
         &&&&& Y^{-1} \ar[rrrrrd]^{\kappa'} \ar[llllld]_{\iota} &&&&& \\
    E \ar[rrrr] \ar[rrrrrd]_{\rho} &&&& E/A   \ar @{..>} [rr]^{\exists f}
                            && B \ar[rrrr] &&&& F \ar[llllld]^{\sigma'}\\
                                     &&&&& Y^0 &&&&&             }$$
In this case, the monomorphism $\im P \to \ker Q$ is realized by the
strict morphism
  $$\xymatrix@C=8pt@R=6pt@M=6pt{ Y^{-1}  \ar[rr]^{\id}  \ar[dd]_{\iota}
                            & & Y^{-1}   \ar[dd]^{\kappa'} \\
                  &          & \\
                  E/A \ar[rr]_{f} & & B       }$$
\end{lem}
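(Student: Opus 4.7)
The plan is to argue both implications via the epi-mono factorization of $P$ recalled in $\S$\ref{SS:epimono}. Denote by $\pi \: E \to E/A$ the quotient and by $\bar\rho \: E/A \to Y^0$ the map induced by $\rho$ (well-defined because $A \subseteq \ker\rho$ by construction). Then $\im P = [Y^{-1} \torel{\pi\iota} E/A]$ embeds into $\bbY$ as the strict mono $(\id_{Y^{-1}}, \bar\rho)$, and $\ker Q = [Y^{-1} \to B]$ embeds into $\bbY$ as the strict mono $(\id_{Y^{-1}}, \sigma'|_B)$. Standard abelian-category reasoning reduces $Q \circ P = 0$ to the existence of a unique lift $\phi \: \im P \to \ker Q$ of the monomorphism $\im P \hookrightarrow \bbY$.

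For the $(\Leftarrow)$ direction, given such an $f$, I would simply take $\phi$ to be the strict morphism with degree-$(-1)$ part $\id_{Y^{-1}}$ and degree-$0$ part $f$. The upper triangle in the lemma's diagram makes $\phi$ a chain map, and the lower triangle guarantees $(\ker Q \hookrightarrow \bbY) \circ \phi = (\im P \hookrightarrow \bbY)$, so $P$ factors through $\ker Q$ and $Q \circ P = 0$.

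The $(\Rightarrow)$ direction, where one must extract $f$ from the abstract lift $\phi$, is the substantive half. Take any butterfly representative of $\phi$ with middle object $N$ and SE-arrow $\rho_\phi \: N \to B$, and compose on the right with $\ker Q \hookrightarrow \bbY$ using the strict-second-factor formula of $\S$\ref{SS:compose}. Since the push-out of $N$ along $\id_{Y^{-1}}$ is $N$ itself, this composition is a butterfly with middle $N$, the same NW, NE, SW arrows, and SE-arrow $\sigma' \circ \rho_\phi$. But this butterfly also represents $\im P \hookrightarrow \bbY$, whose canonical strict representative has middle $E/A \oplus Y^{-1}$; by the rigidity of butterfly representatives (the remark in $\S$\ref{SS:definition}) there is a unique isomorphism between them, which transports the butterfly of $\phi$ onto one with middle $E/A \oplus Y^{-1}$ carrying the standard strict NW, NE, SW arrows. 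Thus $\phi$ is itself strict, with degree-$(-1)$ part $\id_{Y^{-1}}$ and degree-$0$ part some $f \: E/A \to B$; reading off the remaining constraints (vanishing of the NW-SE diagonal composition, and matching of $\sigma' \circ \rho_\phi$ with the strict SE-arrow $\bar\rho + d$) recovers precisely the two commutativities required of $f$. The only real subtlety is strictness of $\phi$, which is exactly what the push-out-along-identity observation delivers.
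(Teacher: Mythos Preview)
Your argument is correct and follows the same reduction as the paper: both proofs identify $Q\circ P=0$ with the existence of a factorization $\varphi\:\im P\to\ker Q$ over $\bbY$, using the descriptions $\im P=[Y^{-1}\to E/A]$ and $\ker Q=[Y^{-1}\to B]$ from $\S$\ref{SS:kernel}--$\S$\ref{SS:epimono}. The paper simply says ``unravel this triangle''; you carry out that unraveling explicitly, and your push-out-along-identity trick (composing $\varphi$ with the strict mono $\ker Q\hookrightarrow\bbY$ and invoking rigidity of butterfly representatives) is a clean way to force $\varphi$ to be strict of the form $(\id_{Y^{-1}},f)$, after which the two commutativities drop out of the butterfly axioms exactly as you indicate.
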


\begin{proof}
  Recall from $\S$\ref{SS:kernel} and $\S$\ref{SS:epimono} that
  $\im P=[Y^{-1}\to E/A]$
  and $\ker Q=[Y^{-1}\to B]$. The composition $Q\circ P$ being zero
  is equivalent to the existence of a commutative triangle
  $$\xymatrix@C=0pt@R=13pt@M=6pt{\im P \ar[rr]^{\varphi}
         \ar[rd] & & \ker Q \ar[ld] \\
                              & \bbY & }$$
  If we unravel this triangle, we see that it is  equivalent to
  the diagram required in the lemma.
\end{proof}

\begin{cor}{\label{C:cohomology}}
  In the sequence $\bbX \llra{P} \bbY \llra{Q} \bbZ$  above the
  cohomology at $\bbY$ is given by
    $$\bbH:= [E/A \llra{f} B].$$
\end{cor}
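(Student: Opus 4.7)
The plan is to identify the cohomology $\bbH$ at $\bbY$ as $\coker(\im P \hookrightarrow \ker Q)$ in the abelian category $\B$, and then to compute this cokernel using the cone description of $\S$\ref{SS:kernel}. Combining $\S$\ref{SS:epimono} and $\S$\ref{SS:kernel}, one writes $\im P = [Y^{-1} \torel{\bar\iota} E/A]$ with $\bar\iota := \pr\circ\iota$, and $\ker Q = [Y^{-1} \torel{\kappa'} B]$, where here $\kappa'$ is understood as the factorisation of the original $\kappa' \: Y^{-1}\to F$ through $B$. Lemma~\ref{L:composition} then identifies the canonical inclusion $\im P \hookrightarrow \ker Q$ as the strict morphism with top arrow $\id_{Y^{-1}}$ and bottom arrow $f$, so the task reduces to computing the cokernel of a strict map whose $(-1)$-component is already an isomorphism.

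Next I apply the cone construction of $\S$\ref{SS:kernel} to this strict morphism. The middle term of its butterfly is $E/A \oplus Y^{-1}$, giving the three-term cone complex
$$Y^{-1} \torel{(\bar\iota,\,-\id)} E/A \oplus Y^{-1} \torel{f+\kappa'} B.$$
The shear automorphism $(e,y) \mapsto (e+\bar\iota(y),y)$ of the middle term, combined with the relation $f \circ \bar\iota = \kappa'$ supplied by Lemma~\ref{L:composition}, splits this complex in $\Ch(\A)$ as the direct sum of the acyclic $[Y^{-1}\torel{-\id}Y^{-1}]$ and $[E/A \torel{f} B]$. Feeding this back into the cokernel formula $\coker = [E_{\text{big}}/A' \to B]$ of $\S$\ref{SS:kernel}, the subobject $A'$ collapses onto the acyclic summand (it becomes $0 \oplus Y^{-1}$ under the shear), so the cokernel reduces to the claimed $[E/A \torel{f} B]$.

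The main obstacle will be verifying that this representative actually lies in $\B$, i.e.\ that $\ker f \in \mcF$ and $\coker f \in \mcT$. The cleanest route is \emph{a posteriori}: the cokernel exists in the abelian category $\B$, so the short exact sequence $0\to \im P \to \ker Q \to \bbH \to 0$ together with Proposition~\ref{P:long} forces $H^{-1}(\bbH) \in \mcF$ and $H^0(\bbH) \in \mcT$ automatically. A more direct verification would unpack the defining torsion conditions for $A \subseteq E$ and $B \subseteq F$ (from the kernel descriptions of $P$ and $Q$) and transport them to $\ker f$ and $\coker f$ via the identity $f \circ \bar\iota = \kappa'$ and the exactness of the NE--SW sequences in the butterflies for $P$ and $Q$.
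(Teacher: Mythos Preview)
The paper gives no proof of this corollary, regarding it as immediate from Lemma~\ref{L:composition} together with the kernel/cokernel recipe of \S\ref{SS:kernel}; your computation is exactly the unpacking one would carry out, and the shear trick is a clean way to do it.

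There is, however, a circularity in the order of your argument. The assertion that ``$A'$ collapses onto the acyclic summand'' already requires the torsion part of $H^{-1}$ of the cone to vanish; after your shear this cohomology is precisely $\ker f$, so you are tacitly using $\ker f\in\mcF$. You then propose to verify $\ker f\in\mcF$ \emph{a posteriori} from the fact that the cokernel lies in $\B$, but the cokernel recipe of \S\ref{SS:kernel} only gives $H^{-1}(\coker)=$ the \emph{free part} of $H^{-1}(C)$ --- which is in $\mcF$ by construction and says nothing about whether the torsion part vanishes. So as written the two steps lean on each other.

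The fix is one line and is already available in the paper: the inclusion $\im P\hookrightarrow\ker Q$ is a monomorphism in $\B$, so Corollary~\ref{C:monoepi} (equivalently, $\ker j=0$ together with $H^0(\ker j)=T$) gives $H^{-1}(C)\in\mcF$, hence $\ker f\in\mcF$, \emph{before} you compute $A'$. With this in hand, $A'=\im\kappa$ is forced and your conclusion follows. The remaining condition $\coker f\in\mcT$ is then genuinely a posteriori (since $H^0(\coker)=H^0(C)=\coker f$), or can be seen directly as a quotient of $\coker\kappa'=H^0(\ker Q)\in\mcT$.
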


\begin{cor}{\label{C:exact}}
  A sequence $\bbX \llra{P} \bbY \llra{Q} \bbZ$ as above is exact
  at $\bbY$ if and only if there is an isomorphism $E/A \risom B$
  respecting  the  morphisms $\iota$, $\rho$, $\kappa'$, and $\sigma'$.
\end{cor}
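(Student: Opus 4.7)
The plan is to combine Corollary~\ref{C:cohomology} with an intrinsic characterization of the zero object in $\B$. By definition, the sequence is exact at $\bbY$ precisely when its cohomology there vanishes, and Corollary~\ref{C:cohomology} identifies this cohomology with the object $\bbH = [E/A \llra{f} B]$ of $\B$, where $f$ is the morphism produced by Lemma~\ref{L:composition}.

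The key intermediate step I would establish is that an object $[X^{-1} \torel{d} X^0]$ of $\B$ is the zero object if and only if $d$ is an isomorphism in $\A$. One direction is easy: if the object is zero, then $H^{-1} = \ker d$ and $H^{0} = \coker d$ both vanish, so $d$ is an isomorphism. For the converse I would apply Corollary~\ref{C:isom} to the unique strict morphism $[X^{-1} \to X^0] \to [0 \to 0]$: converting it into butterfly form as in $\S$\ref{SS:definition} yields $E = X^0$, $\kappa = d$, $\rho = 0$, so the NW--SE sequence is $X^{-1} \torel{d} X^0 \to 0$, which is short exact exactly when $d$ is an isomorphism. By Corollary~\ref{C:isom}, the morphism is then an isomorphism in $\B$.

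Putting these together, exactness at $\bbY$ is equivalent to $f$ being an isomorphism in $\A$. The direction $(\Rightarrow)$ is then immediate: $f$ itself is the required isomorphism, and by its construction in Lemma~\ref{L:composition} it automatically respects $\iota$, $\rho$, $\kappa'$, and $\sigma'$. For $(\Leftarrow)$, any isomorphism $g \: E/A \risom B$ that respects these four arrows already satisfies the commutativity hypotheses of Lemma~\ref{L:composition}, so it can be used in place of $f$ in Corollary~\ref{C:cohomology}; this presents $\bbH$ as $[E/A \torel{g} B]$, which is zero by the characterization above.

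The only mildly delicate point I anticipate is the intrinsic recognition of the zero object in $\B$ carried out in the second paragraph; once that is in hand the rest is a direct assembly of Corollary~\ref{C:cohomology} and Lemma~\ref{L:composition}.
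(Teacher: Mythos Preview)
Your proposal is correct and follows essentially the route the paper intends: the corollary is meant to be an immediate consequence of Corollary~\ref{C:cohomology} together with the (easy) observation that an object $[X^{-1}\to X^0]$ of $\B$ is zero precisely when its differential is an isomorphism in $\A$. The only point worth tightening is your $(\Leftarrow)$ step: rather than saying $g$ ``can be used in place of $f$'', note that the map $f$ of Lemma~\ref{L:composition} is in fact unique (by the same torsion-pair argument that proves uniqueness in Proposition~\ref{P:link}), so any $g$ respecting the four arrows must equal $f$, and hence $f$ itself is an isomorphism.
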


\subsection{An alternative way of looking at complexes in $\B$}
{\label{SS:complexes2}}

Let $P$ and $Q$ be as in $\S$\ref{SS:complexes1}. By a {\em link}
 from $P$ to $Q$ we mean a morphism
$\delta \: E \to F$ such that:

\begin{itemize}

  \item[$\mathbf{L}$)] The following diagram commutes and the
  horizontal  sequence  is a complex:
    $$\xymatrix@C=2pt@R=4pt@M=6pt{ &&&&& Y^{-1} \ar[rd]^{\kappa'}
                                                  \ar[ld]_{\iota} &&&& \\
        0 \ar[rr] && X^{-1} \ar[rr] && E \ar[rd]_{\rho} \ar[rr]^{\delta} &&
                       F \ar[ld]^{\sigma'} \ar[rr] && Z^0 \ar[rr] && 0  \\
                             &&&&& Y^0 &&&&& }$$
\end{itemize}

\begin{prop}{\label{P:link}}
    Consider the  sequence $\bbX \llra{P} \bbY \llra{Q} \bbZ$.
    Then $Q\circ P=0$ if and only if there exists a link from $P$
    to $Q$. If such a link exists then it is unique.
\end{prop}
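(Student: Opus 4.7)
My plan is to use the characterization of $Q \circ P = 0$ supplied by Lemma \ref{L:composition} as the existence of a morphism $f\:E/A \to B$ fitting in the big pentagon, and to convert back and forth between such an $f$ and a link $\delta \: E \to F$ by inserting/extracting the natural inclusion $B \hookrightarrow F$ and the natural quotient $E \twoheadrightarrow E/A$.

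For the direction $Q\circ P = 0 \Rightarrow$ link exists, the construction is direct: given $f$ from Lemma \ref{L:composition}, set $\delta := (B \hookrightarrow F)\circ f \circ (E \twoheadrightarrow E/A)$. The two triangle commutativities at $Y^{-1}$ and $Y^0$ come immediately from the commutativity of the big pentagon in that lemma; the vanishing $\delta \circ \kappa = 0$ follows from $\im\kappa \subseteq A$, and $\rho'\circ\delta = 0$ follows from $B \subseteq \ker\rho'$.

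For the reverse direction, start with a link $\delta$ and try to factor it through $B$, then through $E/A$, to recover $f$. This is where the torsion pair axioms do the real work. To show $\delta(E)\subseteq B$, consider the composite $\psi\: E \torel{\delta} \ker\rho' \twoheadrightarrow \ker\rho'/B$; the commutation $\delta\iota = \kappa'$ and the fact that $\kappa'$ lands in $B$ make $\psi$ kill $\iota(Y^{-1})$, so $\psi$ factors through $E/\iota(Y^{-1}) \cong X^0$. The vanishing $\delta\kappa = 0$ then forces this factorization through $\coker d_X \in \mcT$. Since $\ker\rho'/B \in \mcF$ by the defining property of $B$, the torsion pair axiom kills $\psi$, giving $\delta(E)\subseteq B$. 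A symmetric/dual argument shows $\delta(A) = 0$: the map $\tilde\delta|_A$ factors through $A/\im\kappa \in \mcT$, and because $A \subseteq \ker\rho$, its image lies in $B \cap \ker\sigma' \subseteq \iota'(\ker d_Z) \in \mcF$, so again one lands in $\mcT\cap\mcF = 0$. Thus $\tilde\delta$ descends to $f\:E/A \to B$, and checking the pentagon commutativity is immediate, so Lemma \ref{L:composition} gives $Q\circ P = 0$.

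Uniqueness of $\delta$ is another instance of the same torsion-pair squeeze. If $\delta_1, \delta_2$ are both links, their difference $\epsilon$ has $\sigma'\epsilon = \rho'\epsilon = 0$, placing $\epsilon(E)$ inside $\iota'(Z^{-1}) \cap \ker\rho' \cong \ker d_Z \in \mcF$; the identities $\epsilon\iota = 0$ and $\epsilon\kappa = 0$ let $\epsilon$ factor through $\coker d_X \in \mcT$, so $\epsilon$ is a $\mcT \to \mcF$ map and vanishes.

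I expect the two factorization arguments in step two to be the main obstacle, since one has to correctly identify which quotient/subobject provides the $\mcT$ side and which provides the $\mcF$ side; in particular, the key observation that $B \cap \ker\sigma'$ is forced into $\iota'(\ker d_Z) \in \mcF$ is the non-obvious place where the specific defining property of $B$ (with its $\mcT/\mcF$ filtration piece) is used together with the $\ker d_Z \in \mcF$ condition on objects of $\B$.
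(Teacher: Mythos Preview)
Your proof is correct and follows essentially the same strategy as the paper's: both directions pivot on Lemma~\ref{L:composition}, and the reverse direction together with uniqueness are handled by the same torsion-pair squeeze, factoring through $H^0(\bbX)\in\mcT$ on the source side and $H^{-1}(\bbZ)\in\mcF$ (or $\ker\rho'/B\in\mcF$) on the target side. The only cosmetic differences are that you establish $\delta(E)\subseteq B$ before $\delta(A)=0$ (the paper does the reverse and leaves the dual case to the reader), and in your $\delta(A)=0$ step you route through $B\cap\ker\sigma'$ rather than directly through $\ker\rho'\cap\ker\sigma'$; both land in $\iota'(\ker d_Z)$, so this makes no difference.
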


\begin{proof}
    One implication is trivial from Lemma \ref{L:composition}.
    To prove the reverse implication,
    let $A \subseteq E$ and $B \subseteq F$ be as in
   Lemma \ref{L:composition}. We need to show that
   every  $\delta \: E \to F$ as in ($\mathbf{L}$) necessarily
   vanishes on $A$ and factors through $B$; the result will then
   follow from Lemma \ref{L:composition}.

   Let us prove that $\delta$ vanishes on $A$. Since
   $\delta(X^{-1})=0$,
   we have an induced map $\overline{\delta} \: H^{-1}(C(P)) \to F$.
   By definition, the image $T$ of $A$ in $H^{-1}(C(P))$ is the
   torsion   part of $H^{-1}(C(P))$. Observe  that
   $\overline{\delta} \: H^{-1}(C(P)) \to F$ factors through the
   kernels of
   both $\sigma' \: F\to Y^0$ and $\rho' \: F \to Z^0$, and that
   $\ker\sigma'\cap\ker\rho'=\ker (d \: Z^{-1} \to Z^0)$ belongs to
   $\mcF$. So $\overline{\delta}(T)=0$. That is $\delta(A)=0$.

   The proof that $\delta$ factors through $B$ is similar.

   We now prove the uniqueness. Let $\delta$ and
    $\delta'$ be two links, and set $\epsilon=\delta-\delta'$.
    By the commutativity condition of ($\mathbf{L}$), $\epsilon$
    vanishes on $Y^{-1}$. Since the horizontal sequence is a complex,
    $\epsilon$ vanishes on $X^{-1}$
    as well. Hence, $\epsilon$ factors through
    $E/(\kappa(X^{-1})+\iota(Y^{-1})\cong H^0(\bbX)$.

    Similarly, by the commutativity condition of ($\mathbf{L}$),
    $\sigma'\circ\epsilon=0$, and since the  horizontal sequence is
    a complex, $\rho'\circ\epsilon=0$, where $\rho'$ is the morphism
   $\rho' \: F \to Z^0$. This implies that
    $\epsilon$ factors through $\ker\rho' \cap \ker \sigma'\cong
    H^{-1}(\bbZ)$.

     Putting these together, we see that $\epsilon$ factors through
     a map $H^0(\bbX) \to H^{-1}(\bbZ)$. But such a map is
     necessarily zero because $H^0(\bbX)$ is in $\mcT$ and
     $H^{-1}(\bbZ)$ is in $\mcF$. Therefore, $\epsilon$ is the zero
     map. That is, $\delta=\delta'$.
\end{proof}

\begin{prop}
    The complex
        $$0\llra{}\bbX \llra{P} \bbY \llra{Q} \bbZ\llra{}0$$
    is short exact if and only of the middle sequence in $(\mathbf{L})$
    is exact, $H^{-1}(C(P)) \in \mcF$, and $H^{-1}(C(Q)) \in \mcT$.
\end{prop}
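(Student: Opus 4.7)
The plan is to combine three earlier results: Corollary \ref{C:monoepi}, which characterizes monos and epis in $\B$; Corollary \ref{C:exact}, which characterizes exactness at $\bbY$; and the uniqueness part of Proposition \ref{P:link}, which guarantees that $\delta\colon E\to F$ is intrinsically determined once $Q\circ P=0$. The key observation is that the subobjects $A\subseteq E$ and $B\subseteq F$ appearing in the descriptions of $\ker$, $\im$, and $\coker$ become as simple as possible precisely under the cohomological hypotheses of the proposition: namely, if $H^{-1}(C(P))\in\mcF$ then the torsion part $T$ of $H^{-1}(C(P))$ vanishes, so $A=\im\kappa$; dually, if $H^{-1}(C(Q))\in\mcT$ then the whole of $H^{-1}(C(Q))$ is torsion, so $B=\ker\rho'$.

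For the forward direction, assume the sequence is short exact. Applying Corollary \ref{C:monoepi} to $P$ mono gives that $\kappa$ is mono and $H^{-1}(C(P))\in\mcF$, which in turn identifies $A$ with $\im\kappa$. Dually, $Q$ epi gives $\rho'$ epi, $H^{-1}(C(Q))\in\mcT$, and $B=\ker\rho'$. Exactness at $\bbY$, via Corollary \ref{C:exact}, provides an isomorphism $E/A\risom B$ compatible with $\iota$, $\rho$, $\kappa'$, $\sigma'$. Since $\delta$ factors (by Proposition \ref{P:link}) as $E\twoheadrightarrow E/A \risom B \hookrightarrow F$, we read off $\ker\delta=A=\im\kappa$ and $\im\delta=B=\ker\rho'$; together with $\kappa$ mono and $\rho'$ epi, the middle sequence in $(\mathbf{L})$ is short exact.

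For the converse, assume the three conditions. The hypotheses $H^{-1}(C(P))\in\mcF$ and $H^{-1}(C(Q))\in\mcT$ again force $A=\im\kappa$ and $B=\ker\rho'$. Exactness of the middle sequence then gives $\kappa$ mono and $\rho'$ epi, which upgrades (via Corollary \ref{C:monoepi}) to $P$ mono and $Q$ epi. Exactness at the middle two spots gives $\ker\delta=\im\kappa=A$ and $\im\delta=\ker\rho'=B$, so $\delta$ induces an isomorphism $\bar\delta\colon E/A\risom B$. Compatibility of $\bar\delta$ with $\iota$, $\kappa'$, $\rho$, $\sigma'$ is automatic from condition $(\mathbf{L})$ of the link. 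Corollary \ref{C:exact} then yields exactness at $\bbY$, completing the proof.

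The only mildly subtle point, which I would treat as the main (but minor) obstacle, is the identification $A=\im\kappa$ when $H^{-1}(C(P))\in\mcF$ and the dual identification $B=\ker\rho'$; both follow immediately from the definition of $A$ and $B$ in $\S$\ref{SS:kernel} as soon as one unwinds that ``torsion part of a free object is zero'' and ``torsion part of a torsion object is the whole object.'' Everything else is a direct assembly of the already established corollaries.
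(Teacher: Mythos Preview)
Your proof is correct and follows essentially the same approach as the paper, which simply records that the result ``follows from Corollary \ref{C:monoepi} and Corollary \ref{C:cohomology}.'' You have unpacked this in detail: the identifications $A=\im\kappa$ and $B=\ker\rho'$ under the hypotheses, the factorization of the link $\delta$ through $E/A\to B$, and the use of Corollary \ref{C:exact} (itself an immediate consequence of Corollary \ref{C:cohomology}) are exactly what the paper's one-line proof is pointing to.
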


\begin{proof}
  Follows from Corollary \ref{C:monoepi} and Corollary \ref{C:cohomology}.
\end{proof}

Using the idea of  link, we see that a chain complex
 $$\bfX= \ \ \cdots \to\, ^{n-2}\bbX \to\, ^{n-1}\bbX \to\,
    ^{n}\bbX \to\, ^{n+1}\bbX \to\, ^{n+2}\bbX \to \cdots,$$
with $^n\bbX= [d: \,^nX^{-1} \to\, ^nX^0]$,  is equivalently
described by the diagram
 {\small
  $$ \xymatrix@C=-3pt@R=6pt@M=6pt{& ^{n-2}X^{-1} \ar[ld]_{\iota}
    && ^{n-1}X^{-1} \ar[ld]_{\iota}         && ^nX^{-1} \ar[ld]_{\iota}
    && ^{n+1}X^{-1}   \ar[ld]_{\iota}       && ^{n+2}X^{-1} \ar[ld]_{\iota}  &\\
       \cdots  \ar[rr]^{\delta} && ^{n-1}E \ar[ld]^{\sigma} \ar[rr]^{\delta}
    && ^{n}E \ar[ld]^{\sigma}  \ar[rr]^{\delta}
    && ^{n+1}E \ar[ld]^{\sigma} \ar[rr]^{\delta}
    &&  ^{n+2}E \ar[ld]^{\sigma} \ar[rr]^{\delta} && \cdots. \ar[ld]^{\sigma} \\
        &   ^{n-2}X^0 & & ^{n-1}X^0 & & ^nX^0   & & ^{n+1}X^0   &&^{n+2}X^0 & }$$
     }
Literally translating  all the commutativity and exactness
conditions that are needed to be satisfied, we arrive at the
following list of requirements:
\begin{itemize}

\item[$\mathbf{1}$)] \ Every NW-SE sequence is short exact;

\item[$\mathbf{2}$)] \ $\sigma\circ\delta\circ\iota=d$;

\item[$\mathbf{3}$)]  \ $\delta^2\circ\iota=0$ and $\sigma\circ\delta^2=0$;

\end{itemize}

The third axiom can be improved though.

\begin{lem}{\label{L:simplification}}
 We have $\delta^2=0$.
\end{lem}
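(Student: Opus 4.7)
The plan is to mimic the uniqueness argument in the proof of Proposition \ref{P:link} and show that $\delta^2$ factors through a morphism from an object of $\mcT$ to an object of $\mcF$, which must then vanish by the defining orthogonality of the torsion pair.

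First I would note that, although axiom ($\mathbf{3}$) is stated only for the compositions $\delta^2\circ\iota$ and $\sigma\circ\delta^2$, the analogous identities $\delta^2\circ\kappa=0$ and $\rho\circ\delta^2=0$ are built into the link axiom $(\mathbf{L})$: each individual link $\delta$ already satisfies $\delta\circ\kappa=0$ and $\rho\circ\delta=0$ (this is exactly the ``horizontal sequence is a complex'' condition). Applying these observations at the two positions adjacent to $\delta^2$, one concludes that $\delta^2\colon {}^nE \to {}^{n+2}E$ vanishes on the subobject $\kappa({}^{n-1}X^{-1})+\iota({}^nX^{-1})$ of ${}^nE$, and has image contained in $\ker\sigma\cap\ker\rho$ inside ${}^{n+2}E$.

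Next I would identify this quotient and this intersection using the butterfly structure. The NE-SW short exact sequence on ${}^nE$ realizes ${}^nE/\iota({}^nX^{-1}) \cong {}^{n-1}X^0$ via $\sigma$, and the commutativity relation $\sigma\circ\kappa = d$ then yields
\[
{}^nE\big/\bigl(\kappa({}^{n-1}X^{-1})+\iota({}^nX^{-1})\bigr) \;\cong\; {}^{n-1}X^0/d({}^{n-1}X^{-1}) \;=\; H^0({}^{n-1}\bbX).
\]
Dually, the NE-SW short exact sequence on ${}^{n+2}E$ identifies $\ker\sigma$ with ${}^{n+2}X^{-1}$, on which $\rho$ restricts to $d\colon {}^{n+2}X^{-1}\to{}^{n+2}X^0$; hence $\ker\sigma\cap\ker\rho \cong \ker d = H^{-1}({}^{n+2}\bbX)$.

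Putting these together, $\delta^2$ factors as
\[
{}^nE \;\twoheadrightarrow\; H^0({}^{n-1}\bbX) \;\llra{g}\; H^{-1}({}^{n+2}\bbX) \;\hookrightarrow\; {}^{n+2}E.
\]
Since ${}^{n-1}\bbX$ and ${}^{n+2}\bbX$ are objects of $\B$, the source of $g$ lies in $\mcT$ and its target in $\mcF$, so $g=0$ by torsion-pair orthogonality, whence $\delta^2 = 0$. I do not foresee any genuine obstacle; the only delicate point is the bookkeeping that ensures all four vanishing identities $\delta^2\circ\iota=0$, $\delta^2\circ\kappa=0$, $\sigma\circ\delta^2=0$, and $\rho\circ\delta^2=0$ are in hand before invoking the torsion-pair argument.
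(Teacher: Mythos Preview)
Your argument is correct. Both your proof and the paper's ultimately rest on the same mechanism---factoring a map through $H^0(\text{something in }\B)\to H^{-1}(\text{something in }\B)$ and invoking $\Hom(\mcT,\mcF)=0$---but the packaging is different. You apply the \emph{uniqueness} argument from Proposition~\ref{P:link} directly to the composite $\delta^2$, using the four vanishing relations $\delta^2\circ\iota=0$, $\delta^2\circ\kappa=0$, $\sigma\circ\delta^2=0$, $\rho\circ\delta^2=0$ to see that $\delta^2$ factors through $H^0({}^{n-1}\bbX)\to H^{-1}({}^{n+2}\bbX)$. The paper instead invokes the \emph{existence/factorization} part of the proof of Proposition~\ref{P:link}: it defines the subobject ${}^nA\subseteq{}^nE$ (pulled back from the torsion part of the relevant cohomology) and shows, one $\delta$ at a time, that $\delta({}^{n-1}E)\subseteq{}^nA$ and ${}^nA\subseteq\ker(\delta\colon{}^nE\to{}^{n+1}E)$. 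This yields the stronger intermediate statement $\operatorname{im}\delta\subseteq{}^nA\subseteq\ker\delta$, from which $\delta^2=0$ is immediate. Your route is slightly more economical for the lemma at hand; the paper's route gives more, namely an explicit pinning-down of $\operatorname{im}\delta$ and $\ker\delta$ relative to the canonical subobject ${}^nA$, which is useful elsewhere.
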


\begin{proof}
 Let $^nC:=\ker(\delta\circ\iota)/\im(\sigma\circ\delta)$, i.e., the
 middle cohomology of the sequence $$^{n-1}X^{-1}
 \llra{\delta\circ\iota}\, ^{n}E \llra{\sigma\circ\delta}\,
 ^{n}X^0,$$ that is a complex by ($\mathbf{3}$). Let $^nA:=q^{-1}(T)
 \subseteq\, ^nE$, where $T \subseteq \, ^nC$ is the torsion part of
 $^nC$. As we saw in the proof of Proposition \ref{P:link}, $\delta
 \:\,^{n-1}E\to \,^{n}E $ factors through $^nA$, and $\delta
 \:^{n}E\to \,^{n+1}E$ vanishes on $^nA$. This means
 $$\delta(\,^{n-1}E) \ \subseteq  \ ^nA \  \subseteq \ \ker(\,^nE
 \torel{\delta}\,^{n+1}E).$$
   Therefore, $\delta^2=0$.
\end{proof}

Summarizing the above discussion, to give a complex $\bfX$ in $\B$
is equivalent to giving a diagram as above such that ($\mathbf{1}$)
and  ($\mathbf{2}$) are satisfied. Conditions ($\mathbf{1}$) and
($\mathbf{2}$) are saying that the objects $^nX^0$ are redundant and
can be deduced from the rest of the data. So we can scrape off all
the $^nX^0$, hence also the axioms ($\mathbf{1}$) and
($\mathbf{2}$), without losing any information about $\bfX$. This
leads to the definition of the functor $\Tot\: \Chst(\B) \to
\Ch(\A,\mcT,\mcF)$ that is discussed in the nest subsection.

\subsection{Definition of the functor $\Tot$ }{\label{SS:morphisms}}
We define the functor
    $$\Tot\: \Chst(\B) \to \Ch(\A,\mcT,\mcF)$$
as follows. Let $\bfX$ be a complex in $\B$ as in
$\S$\ref{SS:complexes2}. We set $\Tot(\bfX)=(\Eb,\Mb)$, where
$E^n:=\, ^nE$ and $M^n:=\iota(\,^{n}X^{-1})$.

To define the effect of $\Tot$ on morphisms, let $\bfX$ and  $\bfY$
be complexes in $\B$, and let $\Tot(\bfX)=(\Eb,\Mb)$ and
$\Tot(\bfY)=(\Fb,\Nb)$. If we use $\Tot$ and literally translate
what goes into the definition of a morphism of complexes $\bfX \to
\bfY$ in $\Chst(\B)$, we see that such a morphism is given by a
collection of morphisms $f_n \: E^n \to F^n$ in $\A$ satisfying the
following conditions:
\begin{itemize}
\item[$\mathbf{i}$)] For every $n$, $f_n(M^n) \subseteq N^n$;

\item[$\mathbf{ii}$)] For every $n$, the ``commutator''
     $$\epsilon_n:=f_{n+1}\circ\delta-\delta\circ f_n \: E^n \to
             F^{n+1}$$ 
     vanishes on $M^n$ and factors through $N^{n+1}$.
\end{itemize}

There are two problems here, however. The first problem is that,
 a priori, this is not quite
the same thing as a morphism in $\Ch(\A,\mcT,\mcF)$; we need that
$\epsilon_n$ be actually equal to zero. This is shown to be the case
in  Lemma \ref{L:zero} below. The second problem is that, a morphism
$\bfX \to \bfY$ does not, a priori, uniquely determine the
collection $\{f_n\}$. It only uniquely determines the effect of
$f_n$ on $M^n$ and $E^n/M^n$, but not on $E^n$. This is taken care
of in Lemma \ref{L:unique} below.

The idea of the proof of both lemmas is very similar to the proof of
Proposition \ref{P:link}.

\begin{lem}{\label{L:zero}}
   For every $n$, we have $\epsilon_n=0$. That is, the diagrams
     $$\xymatrix@=10pt@M=8pt@C=14pt{ E^n \ar[r]^{\delta} \ar[d]_{f_{n}}
           & E^{n+1}  \ar[d]^{f_{n+1}} \\
       F^n \ar[r]_{\delta}  & F^{n+1} }$$
    commute.
\end{lem}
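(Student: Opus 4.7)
The plan is to exhibit $\epsilon_n \: E^n \to F^{n+1}$ as factoring through a morphism from an object of $\mcT$ to an object of $\mcF$; since $\Hom(\mcT,\mcF)=0$, this will force $\epsilon_n=0$. The key observation is that the assumptions $\mathbf{i}$ and $\mathbf{ii}$ simultaneously give us information at three consecutive indices $n-1,n,n+1$: the vanishing of $\epsilon_{n-1}$ on $M^{n-1}$ and of $\epsilon_n$ on $M^n$ will restrict the effective source of $\epsilon_n$, while the factorizations of $\epsilon_n$ through $N^{n+1}$ and of $\epsilon_{n+1}$ through $N^{n+2}$ will restrict its image. The torsion-pair compatibility of the decorations then supplies a $\mcT$-object on the source side and an $\mcF$-object on the target side. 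The argument is a direct analogue of the vanishing step in the proof of Proposition \ref{P:link}.

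First I would identify where the image of $\epsilon_n$ lands. By $\mathbf{ii}$, the image is already contained in $N^{n+1}$. Using $\delta^2=0$ in both $\Eb$ and $\Fb$, a one-line computation yields the commutator identity $\delta\circ\epsilon_n = -\epsilon_{n+1}\circ\delta$; combining this with the fact (from $\mathbf{ii}$ at index $n+1$) that $\epsilon_{n+1}$ factors through $N^{n+2}$, we see that $\delta\bigl(\epsilon_n(E^n)\bigr)\subseteq N^{n+2}$. Hence the image of $\epsilon_n$ is contained in $N^{n+1}\cap\delta^{-1}(N^{n+2}) = H^{-1,n+1}(\Fb,\Nb)$, which belongs to $\mcF$ by the compatibility of $(\Fb,\Nb)$ with $(\mcT,\mcF)$.

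Next I would show that $\epsilon_n$ vanishes on the larger subobject $M^n + \delta(M^{n-1})$. Vanishing on $M^n$ is exactly $\mathbf{ii}$ at index $n$. For $m\in M^{n-1}$, the analogous identity $\epsilon_n\circ\delta = -\delta\circ\epsilon_{n-1}$ gives $\epsilon_n(\delta m) = -\delta\bigl(\epsilon_{n-1}(m)\bigr) = 0$, since $\epsilon_{n-1}$ vanishes on $M^{n-1}$ by $\mathbf{ii}$ at index $n-1$. Therefore $\epsilon_n$ descends to a morphism
$$\overline{\epsilon_n} \: E^n/(M^n+\delta M^{n-1}) = H^{0,n-1}(\Eb,\Mb) \lra N^{n+1}\cap\delta^{-1}(N^{n+2}) = H^{-1,n+1}(\Fb,\Nb).$$
The source lies in $\mcT$ and the target in $\mcF$, so $\overline{\epsilon_n}=0$, and hence $\epsilon_n=0$.

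There is no real obstacle in the argument; the only thing to verify carefully is that the indices line up, namely that the two commutator identities for $\epsilon_n$ involve exactly the $\epsilon_{n\pm 1}$ whose factorization and vanishing properties are supplied by $\mathbf{ii}$. Once this bookkeeping is in place, the proof reduces to the torsion-pair orthogonality $\Hom(\mcT,\mcF)=0$ applied to $\overline{\epsilon_n}$.
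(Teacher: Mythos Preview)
Your proposal is correct and follows essentially the same route as the paper's proof: both factor $\epsilon_n$ through a morphism $H^{0,n-1}(\Eb,\Mb)\to H^{-1,n+1}(\Fb,\Nb)$ and invoke $\Hom(\mcT,\mcF)=0$. The only cosmetic difference is that you package the two key steps via the commutator identities $\delta\circ\epsilon_n=-\epsilon_{n+1}\circ\delta$ and $\epsilon_n\circ\delta=-\delta\circ\epsilon_{n-1}$ (using $\delta^2=0$, already secured by Lemma~\ref{L:simplification}), whereas the paper verifies the same vanishing and factorization by direct inspection of the diagram at indices $n\pm 1$.
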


\begin{proof}
    Note that, by definition, $E^n:=\, ^nE$ and
    $M^n:=\iota(\,^{n}X^{-1})$, $F^n:=\, ^nF$ and
    $N^n:=\iota(\,^{n}X^{-1})$.

    Condition ($\mathbf{ii}$) implies that the diagram commutes when
    restricted to $M^n \subseteq E^n$. It also implies that the
    diagram commutes when composed with the quotient map
    $q_{n+1} \: F^{n+1} \to F^{n+1}/N^{n+1}$.

   If we use ($\mathbf{i}$) and ($\mathbf{ii}$) with $n-1$, it
   follows that the diagram commutes when restricted to
   $\delta(M^{n-1})$ as well. That is, $\epsilon_n$ vanishes on
   $\delta(M^{n-1})$. Therefore, $\epsilon_n$ induces a morphism
   $$\varphi \: \coker(M^{n-1} \llra{p_{n}\circ \delta} E^{n}/M^{n})
   \to N^{n+1},$$
   where $p_{n} \: E^{n} \to E^{n}/M^{n}$ is the quotient
   map.

   Similarly, if we use ($\mathbf{i}$) and ($\mathbf{ii}$)
   with $n+1$, it follows that the diagram commutes when composed
   with $q_{n+2}\circ \delta \: F^{n+1} \to F^{n+2}/N^{n+2}$.
   This implies that $\epsilon_n$, hence also $\varphi$, factors
   through the kernel of $N^{n+1} \to F^{n+2}/N^{n+2}$.
   In other words, we obtain a morphism
      $$\varphi \: \coker(M^{n-1} \llra{p_{n}\circ \delta} E^{n}/M^{n})
        \to \ker (N^{n+1} \llra{q_{n+2}\circ \delta} F^{n+2}/N^{n+2}).$$
   Now observe that the left hand side is equal to
   $H^0(\,^{n-1}\bfX) \in \mcT$, and the right hand side is equal to
   $H^{-1}(\,^{n+1}\bfX) \in \mcF$. By the definition of a torsion
   pair, $\varphi$ has to be the zero map.
   This implies that $\epsilon_n=0$.
\end{proof}

\begin{lem}{\label{L:unique}}
  Let $\{f_n\}$ and $\{f'_n\}$ be two families of morphisms as above
  such that:
    \begin{itemize}
        \item[$\triangleright$] for every $n$, $f_n=f'_n \: M^n \to N^n$;

        \item[$\triangleright$]  for every $n$,
                              $f_n=f'_n \: E^n/M^n \to F^n/N^n.$
    \end{itemize}
   Then, $f_n=f'_n$ for all $n$.
\end{lem}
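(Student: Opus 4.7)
The plan is to mirror the argument of Proposition \ref{P:link} (and of Lemma \ref{L:zero}): set $g_n := f_n - f'_n$, reduce the problem to showing that $g_n$ factors through a morphism from an object of $\mcT$ to an object of $\mcF$, and conclude that it vanishes by the defining property of the torsion pair.

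First I would record the properties of $g_n$ that are immediate. Linearity of conditions ($\mathbf{i}$) and ($\mathbf{ii}$), together with Lemma \ref{L:zero} applied to both $\{f_n\}$ and $\{f'_n\}$, give strict commutation $g_{n+1}\circ\delta = \delta\circ g_n$ and $g_n(M^n)\subseteq N^n$. The two hypotheses of the lemma translate to $g_n|_{M^n} = 0$ and $g_n(E^n)\subseteq N^n$ respectively.

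The heart of the argument is to use the strict commutation with $\delta$ to shrink both the source and the target of $g_n$. On the source, for $x\in M^{n-1}$ one computes $g_n(\delta x) = \delta(g_{n-1}x) = 0$, so $g_n$ vanishes on $\delta(M^{n-1})$; combined with $g_n|_{M^n}=0$, this shows that $g_n$ factors through $E^n/(M^n + \delta M^{n-1}) = H^{0,n-1}(\Eb,\Mb)$, which lies in $\mcT$ by the compatibility of $\Tot(\bfX)$ with the torsion pair. On the target, $\delta(g_n x) = g_{n+1}(\delta x)\in N^{n+1}$ for every $x\in E^n$, so the image of $g_n$ is contained in $N^n\cap \delta^{-1}(N^{n+1}) = H^{-1,n}(\Fb,\Nb)$, which lies in $\mcF$ by the compatibility of $\Tot(\bfY)$.

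Putting the two factorizations together, $g_n$ decomposes as $E^n \twoheadrightarrow T \to F \hookrightarrow F^n$ with $T\in\mcT$ and $F\in\mcF$; the middle arrow is zero by the torsion pair axiom, so $g_n=0$. No induction is required --- the arguments for different $n$ run in parallel, each relying only on the already-given properties of $g_{n-1}$, $g_n$ and $g_{n+1}$. I do not anticipate a serious obstacle; the only point that deserves explicit care is invoking the torsion-pair compatibility of $\Tot(\bfX)$ and $\Tot(\bfY)$, but this is built into the axioms for complexes in $\B$ collected in $\S$\ref{SS:complexes2}.
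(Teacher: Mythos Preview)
Your proposal is correct and follows essentially the same route as the paper: set the difference $g_n=f_n-f'_n$, use Lemma~\ref{L:zero} to get strict commutation with $\delta$, then show $g_n$ factors through $E^n/(M^n+\delta M^{n-1})\in\mcT$ on the source and through $N^n\cap\delta^{-1}(N^{n+1})\in\mcF$ on the target, forcing it to vanish. The only cosmetic difference is that the paper phrases these objects as $H^{0}(\,^{n-1}\bbX)$ and $H^{-1}(\,^{n}\bbY)$ rather than $H^{0,n-1}(\Eb,\Mb)$ and $H^{-1,n}(\Fb,\Nb)$.
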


\begin{proof}
 The proof is similar to the proof of the previous lemma.
 We set $h_n:=f_n-f'_n$. By the first condition, $f_n$
 and $f'_n$ coincide on $M^n$. They also coincide on
 $d(M^{n-1})$ by the previous lemma. So, $h_n$ factors through
 $E^n/(M^n+dM^{n-1})=H^{0}(\,^{n-1}\bbX)$.

 By the second condition, $h_n$ factors through $N^n$.
 It follows from the previous lemma that $d\circ h_n$
 factors through $N^{n+1}$. That is, $h^n$ factors through
 $N^n\cap d^{-1}(N^{n+1})=H^{-1}(\,^{n}\bbX)$.

  Putting the above information together, we see that $h_n$ factors
  through a map $H^{0}(\,^{n-1}\bbX) \to H^{-1}(\,^{n}\bbX)$.
  By the properties of a torsion pair, $h_n$ is necessarily
  the zero map.
\end{proof}

We finally come to the proof of Theorem \ref{T:main}.

\begin{proof}[Proof of Theorem \ref{T:main}] \label{Page:proof}
  We define the inverse $G \: \Ch(\A,\mcT,\mcF) \to \Chst(\B)$
  of $\Tot$ as follows.
   Given $(\Eb,\Mb) \in \Ch(\A,\mcT,\mcF)$, we
   define $G(\Eb,\Mb) \in \Chst(\B)$ to be the
   complex whose $n^{th}$ term is
     $$^n\bbX:=[M^{n} \llra{p_{n+1}\circ \delta} E^{n+1}/M^{n+1}].$$
   Here, $p_{n+1} \: E^{n+1} \to E^{n+1}/M^{n+1}$ stands for  the projection
   map. The differential $d \:\,^{n-1}\bbX \to\,^n\bbX$  is the morphism
    defined by $E^n$. More precisely, it is given by the
    diagram
       $$\xymatrix@C=8pt@R=10pt@M=6pt{ M^{n-1} \ar[rd]^{\delta}  \ar[dd]
                            & & M^{n} \ar[ld] \ar[dd]  \\
        & E^n \ar[ld]_(0.4){p_n} \ar[rd]^(0.38){p_{n+1}\circ \delta}  & \\
                  E^{n}/M^{n} & & E^{n+1}/M^{n+1}       }$$
    The arguments of this and the previous subsection
    can be reversed, in a trivial manner, to show that
    $G$ is an inverse to $\Tot$.
\end{proof}

We are  not quite done yet with the proof of Theorem \ref{T:main},
because we have to show that the functors $\Tot$ and $G$ respect
$\sis$ and $\qis$. We do this in the next section.

\section{Effect of  $\Tot$ on  derived
categories}{\label{S:derived}}

In this section we study the effect of $\Tot$ on various cohomology
groups and complete the proof of Theorem \ref{T:main} by showing
that $\Tot$ respects $\sis$ and $\qis$. As a corollary of this, we
reprove a theorem of Happel-Reiten-Smal\o\ asserting that in the
case where the torsion pair is tilting or cotilting there is an
equivalence of derived categories $\mcD(\B) \risom \mcD(\A)$; see
Theorem \ref{T:HRS}.

\subsection{Effect of  $\Tot$ on cohomologies}{\label{SS:more}}

Let $\bfX$ be a complex in $\B$. We denote the $n^{th}$ cohomology
of $\bfX$ by
   $$\bbH^n(\bfX):=\im \,^{n-1}d/\ker\,^n d.$$
(Not to be confused with hypercohomology.) The rest of the notation
appearing in the next proposition have been introduced in
$\S$\ref{SS:deccoh}

\begin{prop}{\label{P:cohisom}}
   For every $\bfX$ in $\Ch(\B)$, we have  natural isomorphisms
   $$H^{-1,n}(\Tot(\bfX))\cong H^{-1}(\,^n\bbX),$$
   $$H^{0,n}(\Tot(\bfX))\cong H^{0}(\,^n\bbX),$$
   $$\bbH^n(\Tot(\bfX))\cong \bbH^n(\bfX).$$
\end{prop}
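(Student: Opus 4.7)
The plan is to handle the three isomorphisms in sequence, the first two being essentially formal and the third requiring a torsion-theoretic identification.

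For the first two isomorphisms, I use Theorem \ref{T:main}: since $G$ is inverse to $\Tot$, we have a natural identification $\bfX \cong G(\Tot(\bfX))$ in $\Chst(\B)$, and the explicit definition of $G$ in the proof of Theorem \ref{T:main} gives $^n\bbX \cong [M^n \torel{p_{n+1}\circ\delta} E^{n+1}/M^{n+1}]$. The kernel and cokernel of this two-term complex are \emph{by definition} $H^{-1,n}(\Tot(\bfX))$ and $H^{0,n}(\Tot(\bfX))$ as in $\S$\ref{SS:deccoh}, giving the first two isomorphisms; naturality is automatic from the functoriality of $G$.

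For the third isomorphism I would apply Corollary \ref{C:cohomology} to the three-term subcomplex $\,^{n-1}\bbX \to \,^n\bbX \to \,^{n+1}\bbX$. The two morphisms in $\B$ have extensions $E^n$ and $E^{n+1}$, with structure maps read off from $\delta$ and the natural morphisms $M^{\bullet} \hookrightarrow E^{\bullet}$ and $E^{\bullet} \to E^{\bullet}/M^{\bullet}$, as in the construction of $G$. By Proposition \ref{P:link} the link between them is forced to be the ambient differential $\delta \: E^n \to E^{n+1}$ of $\Eb$. Corollary \ref{C:cohomology} then writes $\bbH^n(\bfX) = [E^n/A \to B]$, where $A \subseteq \delta^{-1}(M^{n+1})$ is the preimage of the torsion part of $\delta^{-1}(M^{n+1})/\delta(M^{n-1})$, $B \subseteq \delta^{-1}(M^{n+2})$ is analogous, and the map $E^n/A \to B$ is the factorization of $\delta$ from Lemma \ref{L:composition}. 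On the other side, $\bbH^n(\Tot(\bfX)) = \bbH^n(\Eb) = [E^n/A^n \to A^{n+1}]$ with $A^n \subseteq \ker\delta_n$ the preimage of the torsion part of $H^n(\Eb)$.

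The heart of the argument is $A = A^n$ (and, symmetrically, $B = A^{n+1}$). Since $A$ is characterized uniquely by its defining conditions, it suffices to verify that $A^n$ satisfies them: the inclusions $\delta(M^{n-1}) \subseteq A^n \subseteq \delta^{-1}(M^{n+1})$ are immediate from $\im\delta_{n-1} \subseteq A^n \subseteq \ker\delta_n$. For $A^n/\delta(M^{n-1}) \in \mcT$ and $\delta^{-1}(M^{n+1})/A^n \in \mcF$, I filter through $\im\delta_{n-1}$ and $\ker\delta_n$ respectively, reducing via Lemma \ref{L:torsion3} to $\im\delta_{n-1}/\delta(M^{n-1}) \in \mcT$ and $\delta^{-1}(M^{n+1})/\ker\delta_n \in \mcF$. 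The first is a quotient of $E^{n-1}/(M^{n-1}+\delta(M^{n-2}))$ (via the epimorphism induced by $\delta$, using $\delta^2 = 0$ to descend through the decoration quotient), which lies in $\mcT$ by the compatibility of the decoration. The second is isomorphic via $\delta$ to $\im\delta_n \cap M^{n+1}$, a subobject of $M^{n+1} \cap \delta^{-1}(M^{n+2}) \in \mcF$ (again by compatibility), since $\im\delta_n \subseteq \ker\delta_{n+1} \subseteq \delta^{-1}(M^{n+2})$. Once $A = A^n$ and $B = A^{n+1}$, the two differentials are both the map induced by $\delta$, so the identification descends to a natural isomorphism in $\B$. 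The main obstacle is precisely this last step: the two characterizations of the intermediate object involve genuinely different filtrations of $E^n$, and reconciling them uses the compatibility of the decoration with the torsion pair in an essential way.
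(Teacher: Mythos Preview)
Your proposal is correct and follows the same route as the paper: the first two isomorphisms come straight from the definition of $\Tot$ (equivalently, of its inverse $G$), and the third is obtained by invoking Corollary~\ref{C:cohomology}. The paper's proof is a one-liner that simply cites Corollary~\ref{C:cohomology} as ``a rephrasing''; you have filled in the honest content of that rephrasing, namely the identification $A = A^n$ (and $B = A^{n+1}$) via the filtrations through $\im\delta_{n-1}$ and $\ker\delta_n$ together with Lemmas~\ref{L:torsion2}, \ref{L:torsion3} and the compatibility of the decoration, which is exactly what is needed to match the two a priori different descriptions of the intermediate subobject.
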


\begin{proof}
  The first two isomorphisms follow from the definition of $\Tot$.
  The last one is simply a rephrasing of Corollary \ref{C:cohomology}.
\end{proof}

\begin{cor}{\label{C:multsys}}
  The functor $\Tot\: \Chst(\B) \to \Ch(\A,\mcT,\mcF)$
  maps $\sis, \qis \subset \Chst(\B)$
  isomorphically to $\sis, \qis \subset \Ch(\A,\mcT,\mcF)$.
  In particular, the functor $\Tot$ preserves, and reflects,
  quasi-isomorphisms.
\end{cor}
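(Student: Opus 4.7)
The strategy is to deduce both correspondences from the natural cohomological identifications provided by Proposition \ref{P:cohisom}. Since Theorem \ref{T:main} has already exhibited $\Tot$ as an equivalence of the ambient categories, it suffices to prove, for each strict morphism $f\:\bfX\to\bfY$ in $\Chst(\B)$, the two biconditionals $f\in\sis \Leftrightarrow \Tot(f)\in\sis$ and $f\in\qis \Leftrightarrow \Tot(f)\in\qis$. The ``In particular'' clause then reduces to an immediate reformulation of the $\qis$ correspondence, since preserving and reflecting quasi-isomorphisms is exactly what this biconditional says.

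For the $\sis$ case, I first observe that a morphism in $\Ch(\B)$ is an isomorphism iff it is degreewise an isomorphism, since kernels and cokernels in categories of complexes are formed levelwise. Then I would reduce further by showing that a morphism $P\:\bbX\to\bbY$ in $\B$ is an isomorphism iff both $H^{-1}(P)$ and $H^0(P)$ are isomorphisms in $\A$. This falls out of the long exact sequence of Proposition \ref{P:long}: assuming both cohomology maps are isomorphisms, the LES forces $H^{-2}(C(P))=H^{-1}(C(P))=H^0(C(P))=0$, so the NW-SE sequence $C(P)$ is short exact, and Corollary \ref{C:isom} concludes that $P$ is an isomorphism (the converse being trivial from functoriality of cohomology). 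Applying the first two isomorphisms of Proposition \ref{P:cohisom} in every degree now translates ``each $\,^nf$ is an iso in $\B$'' into ``$\Tot(f)$ induces isomorphisms on every $H^{-1,n}$ and $H^{0,n}$,'' which is precisely the definition of $\sis\subset\Ch(\A,\mcT,\mcF)$.

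For the $\qis$ case, the third isomorphism $\bbH^n(\Tot(\bfX))\cong\bbH^n(\bfX)$ of Proposition \ref{P:cohisom} is natural in $\bfX$, so $f$ induces an isomorphism on each cohomology $\bbH^n$ in $\B$ iff $\Tot(f)$ does on each $\bbH^n$ of the underlying complex in $\A$. By Corollary \ref{C:qis} the latter is equivalent to the underlying map of chain complexes being a quasi-isomorphism in $\Ch(\A)$, which is exactly the definition of $\qis\subset\Ch(\A,\mcT,\mcF)$. The only delicate step in this outline is the $\B$-isomorphism criterion via $H^{-1}$ and $H^0$ that enters the $\sis$ case; everything else is direct transport across the identifications of Proposition \ref{P:cohisom}, and this criterion is dispatched cleanly by the combination of Proposition \ref{P:long} and Corollary \ref{C:isom} as sketched above.
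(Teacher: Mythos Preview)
Your proof is correct and follows essentially the same approach as the paper, which simply records the corollary as ``Immediate'' from Proposition \ref{P:cohisom}. You have usefully made explicit the one nonobvious point the paper leaves tacit in the $\sis$ case, namely that a morphism in $\B$ is an isomorphism iff it induces isomorphisms on $H^{-1}$ and $H^{0}$, and your derivation of this from Proposition \ref{P:long} and Corollary \ref{C:isom} is exactly right.
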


\begin{proof}
  Immediate.
\end{proof}

\subsection{Effect of $\Tot$ on  derived categories}{\label{SS:derived1}}

The shift functor on $\Ch(\A,\mcT,\mcF)$, $(\Eb, \Mb) \mapsto
(\Eb[1], \Mb[1])$, makes the localized category $\qis^{-1}
\Ch(\A,\mcT,\mcF)$ into a triangulated category. We have the
following.

\begin{thm}{\label{T:derived}}
 The functor $\Tot \: \Chst(\B) \to \Ch(\A,\mcT,\mcF)$
 induces a triangle equivalence
  $$\Tot \: \mcD(\B) \risom \qis^{-1} \Ch(\A,\mcT,\mcF).$$
 In particular, we have equivalences of  bounded derived categories
   $$\Tot \: \mcD^*(\B) \risom \qis^{-1} \Ch^*(\A,\mcT,\mcF),$$
 where $ *=-,+,b$.
\end{thm}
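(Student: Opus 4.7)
The plan is to deduce Theorem \ref{T:derived} by assembling the pieces already developed: the equivalence of ordinary categories comes from chaining Theorem \ref{T:main}, Proposition \ref{P:derived}, and Corollary \ref{C:multsys}; then one upgrades to a triangle equivalence by matching the triangulated structures on both sides and, for the bounded variants, by a truncation argument.

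First I would establish the equivalence of ordinary categories. By Corollary \ref{C:multsys}, the equivalence $\Tot \: \Chst(\B) \risom \Ch(\A,\mcT,\mcF)$ of Theorem \ref{T:main} identifies $\qis \subset \Chst(\B)$ with $\qis \subset \Ch(\A,\mcT,\mcF)$, hence induces an equivalence $\qis^{-1}\Chst(\B) \risom \qis^{-1}\Ch(\A,\mcT,\mcF)$. Composing with the equivalence $\qis^{-1}\Chst(\B) \risom \mcD(\B)$ of Proposition \ref{P:derived} yields the claimed equivalence of plain categories $\mcD(\B) \risom \qis^{-1}\Ch(\A,\mcT,\mcF)$.

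Second, I would verify the triangulated structure. The shift on $\Ch(\A,\mcT,\mcF)$ is the usual one $(\Eb,\Mb)\mapsto (\Eb[1],\Mb[1])$, and one checks directly from the inverse functor $G$ constructed in the proof of Theorem \ref{T:main} that $G$ intertwines shifts, so $\Tot$ does too (up to canonical natural isomorphism). For the distinguished triangles, the key point is that the mapping cone in $\Dec(\A)$ defined in $\S$\ref{SS:homological} is compatible with $\Tot$: given a strict chain map $f \: \bfX \to \bfY$ in $\Chst(\B)$, its cone in $\Ch(\B)$ admits a strict model (since all maps involved are strict), and unraveling the construction of $\Tot$ (which just records the pieces $^nE$ and the subobjects $\iota(\,^n X^{-1})$) shows that $\Tot(\Cone f) \cong \Cone(\Tot f)$ as decorated complexes, with $\Tot$ sending the canonical triangle in $\mcD(\B)$ to the standard triangle of the decorated cone in $\qis^{-1}\Ch(\A,\mcT,\mcF)$. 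Together with Lemma \ref{L:cone}, this shows that every distinguished triangle on the left corresponds to a distinguished triangle on the right, and the equivalence is exact.

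For the bounded versions $*=+,-,b$, the construction of $G$ in the proof of Theorem \ref{T:main} is manifestly degree-wise: $^n\bbX = [M^n \to E^{n+1}/M^{n+1}]$ vanishes iff $M^n = 0$ and $E^{n+1}/M^{n+1} = 0$, so $\Tot$ and $G$ preserve the support of complexes up to a shift by one. Hence $\Tot$ restricts to an equivalence $\Chst^*(\B) \risom \Ch^*(\A,\mcT,\mcF)$ for each boundedness condition, matching $\qis$ on each side. Passing to the localizations and invoking Lemma \ref{L:localizing} (and its obvious one-sided analogues) to see that the bounded localizations embed fully faithfully into the unbounded ones gives the bounded derived equivalences.

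The main obstacle I expect is the verification that $\Tot$ sends distinguished triangles to distinguished triangles: one must be careful that the cone of a strict morphism in $\Ch(\B)$ actually has a strict representative whose $\Tot$ is the decorated cone, and that the connecting morphism $\Cone(f) \to \bfX[1]$ lines up under $\Tot$ with the corresponding map for decorated cones. Once this compatibility is spelled out, the rest is a formal consequence of the equivalences already in place.
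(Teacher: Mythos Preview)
Your proposal is correct and follows essentially the same route as the paper: combine Proposition \ref{P:derived} with Corollary \ref{C:multsys} (via Theorem \ref{T:main}) to obtain the equivalence, and then observe that $\Tot$ respects boundedness. The paper's proof is a one-liner that omits your explicit checks on shifts and cones and does not invoke Lemma \ref{L:localizing} for the bounded cases (indeed that lemma is not needed here, since the restriction of $\Tot$ to bounded complexes already gives the bounded equivalence directly after localizing).
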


\begin{proof}
Follows from Proposition \ref{P:derived} and Corollary
\ref{C:multsys}. Note that $\Tot$ respects any kind of boundedness.
\end{proof}

\subsection{The forgetful functor $\Phi \: \mcD (\B) \to \mcD(\A)$}
{\label{SS:forgetful}}

We have a forgetful triangle functor
    $$\Phi  \: \qis^{-1} \Ch(\A,\mcT,\mcF) \to \mcD(\A),$$
       $$(\Eb, \Mb) \mapsto \Eb.$$
By Theorem \ref{T:derived}, this induces a triangle functor
  $$\Phi\circ\Tot \: \mcD(\B) \to \mcD(\A).$$
Furthermore, by Proposition \ref{P:cohisom}, the following diagram
commutes:
  $$\xymatrix@C=24pt@R=-2pt@M=6pt{ \mcD(\B)
       \ar[dd]_{\Phi\circ\Tot} \ar[rd]^{\bbH} & \\ & \B. \\
                              \mcD(\A) \ar[ru]_{\bbH} &   }$$
Here, the upper $\bbH$ stands for the usual cohomology of chain
complexes, and the lower $\bbH$ is the cohomological functor defined
in $\S$\ref{SS:cohfunc}. The following is immediate.

\begin{prop}
  The functor $\Phi\circ\Tot \: \mcD(\B) \to \mcD(\A)$ reflects
  isomorphisms.
\end{prop}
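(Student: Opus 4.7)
The plan is to chase the commutative diagram displayed in $\S$\ref{SS:forgetful}, using the standard fact that the collection of cohomological functors on the derived category of an abelian category jointly reflects isomorphisms. Concretely, suppose $f \: \bfX \to \bfY$ in $\mcD(\B)$ is such that $\Phi\circ\Tot(f)$ is an isomorphism in $\mcD(\A)$.

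First, I would observe that any isomorphism in $\mcD(\A)$ is in particular a quasi-isomorphism, so $\Phi\circ\Tot(f)$ induces isomorphisms on all the usual cohomology objects $H^n$ in $\A$. Applying the cohomological functor $\bbH \: \mcD(\A) \to \B$ of $\S$\ref{SS:cohfunc} (which sends isomorphisms to isomorphisms, as any cohomological functor does), I conclude that $\bbH^n(\Phi\circ\Tot(f))$ is an isomorphism in $\B$ for every $n$. Alternatively, one can invoke Proposition \ref{P:cohomology} and apply the five-lemma to the short exact sequences comparing $H^n$ with $\bbH^{n-1}$ and $\bbH^n$, which yields the same conclusion.

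Next, I would invoke the commutativity of the triangle in $\S$\ref{SS:forgetful}: it identifies $\bbH^n(\Phi\circ\Tot(f))$ with $\bbH^n(f)$, where the latter $\bbH$ now denotes the usual cohomology-of-complexes functor $\mcD(\B)\to\B$. Thus $\bbH^n(f)$ is an isomorphism in $\B$ for every $n$, i.e., $f$ is a quasi-isomorphism in $\Ch(\B)$. Since a morphism in the derived category $\mcD(\B)$ of the abelian category $\B$ is an isomorphism if and only if it is a quasi-isomorphism, $f$ is an isomorphism in $\mcD(\B)$, as desired.

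There is essentially no obstacle here: once the commutative diagram of $\S$\ref{SS:forgetful} is in place, the argument is a one-line diagram chase combined with the standard fact that cohomology detects isomorphisms in a derived category. The only point worth flagging is to make clear that the two occurrences of ``$\bbH$'' in the diagram are different functors (one on $\mcD(\B)$, the naive cohomology; one on $\mcD(\A)$, the one built from the $t$-structure of the torsion pair), and that both have the isomorphism-reflecting property required.
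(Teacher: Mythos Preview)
Your proposal is correct and follows exactly the approach the paper intends: the paper simply says the proposition ``is immediate'' from the commutative triangle in \S\ref{SS:forgetful}, and you have spelled out the underlying diagram chase (apply $\bbH$, use commutativity, then use that cohomology detects isomorphisms in a derived category). Your aside about Proposition~\ref{P:cohomology} and the five-lemma is an acceptable alternative but unnecessary, since any functor preserves isomorphisms.
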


\subsection{The equivalence $\mcD (\B) \risom  \mcD(\A)$}
{\label{SS:forgetfulequi}}

It is a theorem of Happel-Reiten-Smal\o\ that, in the case where
$(\mcT,\mcF)$ is either tilting or cotilting, there is an
equivalence   of bounded derived categories $\mcD^b(\B) \to
\mcD^b(\A)$.\footnote{In \cite{HaReSm} they actually assume
existence of enough injectives/projectives, but this is not
necessary.}

We reprove  this result using our approach. Recall that a torsion
theory $(\mcT,\mcF)$ is called {\em cotilting} if for every $A \in
\A$ there exists an epimorphism $F \to A$ with $F \in \mcF$. The
torsion theory is called {\em tilting} if for every $A \in \A$ there
exists a monomorphism $A \to T$ with $T \in \mcT$.

Before proving the theorem, we prove a lemma.

\begin{lem}{\label{L:cotilting}}
  Let $(\Eb,\Mb) \in \Ch(\A,\mcT,\mcF)$, and let
  $f \:\Fb \to \Eb$ be a  morphism of complexes.
  Assume that for every $n$, $f^n \: F^n \to E^n$ is an epimorphism
  with kernel in $\mcF$.
  Then the induced decoration $f^{-1}(\Mb)$ on $\Fb$ is compatible
  with $(\mcT,\mcF)$.
\end{lem}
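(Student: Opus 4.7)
The plan is to verify the two compatibility conditions directly by computing $H^{-1,n}(\Fb,\Nb)$ and $H^{0,n}(\Fb,\Nb)$ in terms of the corresponding objects for $(\Eb,\Mb)$, exploiting that $f^n$ is surjective with kernel $K^n \in \mcF$.

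First I would set up notation: write $N^n = (f^n)^{-1}(M^n)$ and $K^n = \ker f^n$, so that $0 \to K^n \to N^n \to M^n \to 0$ is exact, with $K^n \in \mcF$ by hypothesis. Since $f$ is a chain map, a diagram chase shows
$$\partial_F^{-1}(N^{n+1}) = (f^n)^{-1}\bigl(\partial_E^{-1}(M^{n+1})\bigr),$$
and therefore $N^n \cap \partial_F^{-1}(N^{n+1}) = (f^n)^{-1}\bigl(M^n \cap \partial_E^{-1}(M^{n+1})\bigr) = (f^n)^{-1}\bigl(H^{-1,n}(\Eb,\Mb)\bigr)$. This gives a short exact sequence
$$0 \to K^n \to H^{-1,n}(\Fb,\Nb) \to H^{-1,n}(\Eb,\Mb) \to 0.$$
Since both outer terms are in $\mcF$ (the right by compatibility of $(\Eb,\Mb)$, the left by hypothesis), Lemma \ref{L:torsion3} yields $H^{-1,n}(\Fb,\Nb) \in \mcF$.

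For the $\mcT$-condition, I would show that the surjection $f^{n+1}$ induces an isomorphism
$$F^{n+1}/\bigl(N^{n+1}+\partial_F N^n\bigr) \risom E^{n+1}/\bigl(M^{n+1}+\partial_E M^n\bigr) = H^{0,n}(\Eb,\Mb),$$
which lies in $\mcT$ by assumption. Surjectivity is clear since $f^{n+1}$ is surjective and takes $N^{n+1}$ onto $M^{n+1}$ and $\partial_F N^n$ into $\partial_E M^n$. For injectivity one computes the preimage: given $y \in F^{n+1}$ with $f^{n+1}(y) = m + \partial_E(m')$ where $m \in M^{n+1}$, $m' \in M^n$, surjectivity of $f^n$ lets us lift $m'$ to some $x' \in N^n$; then $y - \partial_F(x') \in (f^{n+1})^{-1}(M^{n+1}) = N^{n+1}$, so $y \in N^{n+1} + \partial_F N^n$. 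Combined with $K^{n+1} \subseteq N^{n+1}$, this proves $(f^{n+1})^{-1}(M^{n+1}+\partial_E M^n) = N^{n+1}+\partial_F N^n$, giving the required isomorphism.

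The main obstacle is the second step, specifically the identification of preimages used to show that $f^{n+1}$ induces an isomorphism rather than merely a surjection on $H^{0,n}$; everything hinges on using the surjectivity of $f^n$ to lift the auxiliary element $m' \in M^n$ into $N^n$. Once this is in place, both parts of the compatibility condition follow, and the lemma is proved.
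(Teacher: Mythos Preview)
Your proof is correct and follows exactly the paper's approach: the paper's proof simply asserts that $H^{-1,n}(\Fb,f^{-1}(\Mb))$ is an extension of $H^{-1,n}(\Eb,\Mb)$ by $\ker f^n$ (hence in $\mcF$ by Lemma~\ref{L:torsion3}) and that $H^{0,n}(\Fb,f^{-1}(\Mb))\cong H^{0,n}(\Eb,\Mb)$ (hence in $\mcT$), which is precisely what you have verified in detail. Your added justification of the isomorphism on $H^{0,n}$ via the lifting argument is the natural way to flesh out the paper's ``it is easy to see.''
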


\begin{proof}
  Since $H^{-1,n}(\Fb,f^{-1}(\Mb))$ is an extension
  of $H^{-1,n}(\Eb,\Mb)$ with $\ker f^n$,  it belongs to $\mcF$
  by Lemma \ref{L:torsion3}.
  It is easy to see that $H^{0,n}(\Fb,f^{-1}(\Mb))\cong
   H^{0,n}(\Eb,\Mb)$,
   so it is in $\mcT$.
\end{proof}

\begin{thm}[Happel-Reiten-Smal\o]{\label{T:HRS}}
  Assume $(\mcT,\mcF)$ is cotilting (respectively, tilting). Then
  the functor
    $$\Phi\circ\Tot \: \mcD^*(\B) \to \mcD^*(\A), \ \ *=-,b \ \
             \ \text{(respectively, $*=+,b$)}$$
  defined in $\S$\ref{SS:forgetful} is an equivalence of triangulated
  categories.
\end{thm}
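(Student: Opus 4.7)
The plan is to use Theorem \ref{T:derived} to replace $\mcD^*(\B)$ by $\qis^{-1}\Ch^*(\A,\mcT,\mcF)$, so that $\Phi\circ\Tot$ becomes the forgetful functor $(\Eb,\Mb)\mapsto \Eb$ (modulo quasi-isomorphism). I will do the cotilting case for $*=-$; the other three cases follow by the same method (the bounded case requires keeping track of boundedness, and the tilting case is dual). The proof proceeds by a three-stage reduction to the homotopy category of bounded-above complexes in $\mcF$, where the result becomes a classical one.

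\smallskip

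\textbf{Step 1 (reducing to complexes in $\mcF$).} Given $(\Eb,\Mb)\in\Ch^-(\A,\mcT,\mcF)$, I build by the standard inductive procedure a quasi-isomorphism $\Fb\to\Eb$ with $F^n\in\mcF$ and with each $F^n\to E^n$ an epimorphism whose kernel lies in $\mcF$; this uses the cotilting hypothesis together with the fact (Lemma \ref{L:torsion2}) that $\mcF$ is closed under subobjects. Equipping $\Fb$ with the pulled-back decoration $N^n:=(f^n)^{-1}(M^n)$ yields, by Lemma \ref{L:cotilting}, an object of $\sfF^-(\A,\mcT,\mcF)$ quasi-isomorphic to $(\Eb,\Mb)$. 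Applying Lemma \ref{L:equivalence} (in its second form) to the inclusion $\sfF^-(\A,\mcT,\mcF)\subset\sfK^-(\A,\mcT,\mcF)$, I conclude that the inclusion induces an equivalence $\qis^{-1}\sfF^-(\A,\mcT,\mcF)\risom\qis^{-1}\sfK^-(\A,\mcT,\mcF)\cong \mcD^-(\B)$.

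\smallskip

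\textbf{Step 2 (passing to the full decoration).} Lemma \ref{L:full} already provides the equivalence
\[
\qis^{-1}\sfFF^-(\A,\mcT,\mcF)\risom\qis^{-1}\sfF^-(\A,\mcT,\mcF),
\]
so after composing with Step 1 I have reduced $\mcD^-(\B)$ to $\qis^{-1}\sfFF^-(\A,\mcT,\mcF)$. The key observation now is that an object of $\sfFF^-(\A,\mcT,\mcF)$ is simply a bounded-above complex $\Fb$ in $\mcF$ carrying the tautological decoration $N^n=F^n$; a morphism $(\Fb,\Fb)\to(\Gb,\Gb)$ is an arbitrary chain map (condition $\mathbf{i}$ of \S\ref{SS:morphisms} is vacuous), and a decorated homotopy is an ordinary homotopy. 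Hence $\sfFF^-(\A,\mcT,\mcF)$ is canonically isomorphic to the usual homotopy category $\sfK^-(\mcF)$ of bounded-above complexes in $\mcF$, and the forgetful functor $\Phi$ corresponds under this identification to the inclusion $\sfK^-(\mcF)\hookrightarrow\sfK^-(\A)$.

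\smallskip

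\textbf{Step 3 (classical resolution argument).} It remains to show that the inclusion $\sfK^-(\mcF)\hookrightarrow\sfK^-(\A)$ descends to an equivalence $\qis^{-1}\sfK^-(\mcF)\risom\mcD^-(\A)$. By Lemma \ref{L:torsion2}, $\mcF$ is closed under subobjects; together with the cotilting hypothesis this says that every object of $\A$ admits an $\mcF$-resolution and that kernels of surjections between objects of $\mcF$ lie in $\mcF$. This is exactly the setup in which the standard resolution lemma (another application of Lemma \ref{L:equivalence}, using an $\mcF$-resolution $\Fb\to\Eb$ for any $\Eb\in\sfK^-(\A)$) yields the claimed equivalence. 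Chasing $\Phi\circ\Tot$ through the three identifications shows that it corresponds to the inclusion at each stage, and is therefore an equivalence. The tilting case is dual (inject into $\mcT$, use $\sfT$ and $\sfZT$ in place of $\sfF$ and $\sfFF$), and boundedness on both sides is preserved throughout.

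\smallskip

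The main technical point, and the step most likely to require care, is verifying in Step 1 that the inductively constructed resolution $\Fb$ can be taken bounded above with $F^n\in\mcF$ and with all transition kernels in $\mcF$, so that Lemma \ref{L:cotilting} applies at every degree; everything else is a bookkeeping application of the lemmas already in place.
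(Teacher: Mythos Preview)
Your proof is correct and follows essentially the same route as the paper's: reduce via Theorem~\ref{T:derived} to the forgetful functor, pass through $\sfF^-(\A,\mcT,\mcF)$ and $\sfFF^-(\A,\mcT,\mcF)$ using Lemmas~\ref{L:cotilting}, \ref{L:equivalence}, and~\ref{L:full}, identify $\sfFF^-(\A,\mcT,\mcF)$ with $\sfK^-(\mcF)$, and finish with the classical resolution argument. Two small remarks: in Step~1 you invoke the \emph{second} form of Lemma~\ref{L:equivalence}, but since you construct a quasi-isomorphism $F\to X$ this is actually the \emph{first} form; and your closing concern about the kernels lying in $\mcF$ is automatic, since once $F^n\in\mcF$ any subobject of $F^n$ (in particular $\ker f^n$) is in $\mcF$ by Lemma~\ref{L:torsion2}.
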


\begin{proof} We prove the claim in the cotilting case, so let $*=-$
or $*=b$. By Theorem \ref{T:derived}, it is enough to show that the
forgetful functor $\Phi \: \qis^{-1} \Ch(\A,\mcT,\mcF) \to \mcD(\A)$
is an equivalence.

  Let $\sfFF^*(\A,\mcT,\mcF) \subset \sfF^*(\A,\mcT,\mcF)$ be as in
  $\S$\ref{SS:homological}. Let $\sfF^*(\A)$ be
  the full subcategory of the homotopy category $\sfK^*(\A)$
  of chain complexes in
  $\A$ consisting of complexes whose terms are in $\mcF$. Clearly
  $\Phi$ induces an equivalence $\sfFF^*(\A,\mcT,\mcF) \risom
  \sfF^*(\A)$ of triangulated categories, so, in particular, it
  induces an equivalence of the
  localized categories $\qis^{-1}\sfFF^*(\A,\mcT,\mcF) \risom
  \qis^{-1}\sfF^*(\A)$. To prove the result, we show that
  the  functors
    $$\qis^{-1}\sfFF^*(\A,\mcT,\mcF) \to \qis^{-1}\Ch^*(\A,\mcT,\mcF)
     \ \ \ \text{and} \ \ \  \qis^{-1}\sfF^*(\A) \to \mcD^*(\A),$$
  induced by the corresponding inclusion
  maps, are equivalences of triangulated categories.

  Let us prove the equivalence on the left.
  By Lemma \ref{L:full}, it is enough to show that
  $\Xi \: \qis^{-1}\sfF^*(\A,\mcT,\mcF) \to \qis^{-1}\Ch^*(\A,\mcT,\mcF)$ is
  an equivalence. We will prove this using the first case of Lemma
  \ref{L:equivalence}.

  Let $(\Eb,\Mb) \in \Ch^*(\A,\mcT,\mcF)$. Since the torsion theory
  is cotilting, we can find a complex $\Fb$ with terms all in
  $\mcF$, together with an epimorphic quasi-isomorphism $f \: \Fb
  \to \Eb$. It follows from Lemma \ref{L:cotilting} that
  $(\Fb,f^{-1}(\Mb))$ is in $\sfF^*(\A,\mcT,\mcF)$. The
  quasi-isomorphism $(\Fb,f^{-1}(\Mb)) \to (\Eb,\Mb)$ guarantees
  that Lemma \ref{L:equivalence} applies. This proves that $\Xi$ is
  an equivalence.

  The proof that $\qis^{-1}\sfF^*(\A) \to \mcD^*(\A)$ is an equivalence
  is entirely similar.
\end{proof}

\section{Iterating the construction for $\B$}{\label{S:iterate}}

The abelian category $\B$ comes with  a torsion pair
$(\mcT',\mcF')$, where $\mcT'=\mcF[1]$ and $\mcF'=\mcT$. We describe
the abelian category $\C$ obtained by applying the tilting procedure
to this torsion pair.

\subsection{Objects of $\C$}{\label{SS:objectsC}}

It follows from the description of kernels and cokernels in
$\S$\ref{SS:kernel} that an object in $\C$ is a diagram

  $$\xymatrix@C=8pt@R=6pt@M=6pt{ X^{-1} \ar@{^(->}[rd]^{\kappa}
  \ar[dd]
                            & & Y^{-1} \ar[ld]_{\iota} \ar[dd]  \\
                  & E \ar[ld]^{\sigma} \ar@{->>}[rd]_{\rho}  & \\
                  X^0 & & Y^0       }$$
where $\kappa$ is a monomorphism and $\rho$ is an epimorphism.
Equivalently, an object in $\C$ is a 4-tuple $[K_1\subseteq
K_2\subseteq E \supseteq M]$ satisfying the following conditions:
\begin{itemize}
  \item[$\blacktriangleright$] \ $K_2 \cap M \in \mcF$,

  \item[$\blacktriangleright$] \ $E/(K_1+M) \in \mcT$.
\end{itemize}
(Take $K_1=\kappa(X^{-1})$, $K_2=\ker\rho$, and $M=\iota (Y^{-1})$.)

\subsection{Morphisms of $\C$}{\label{SS:morphismsC}}
A {\em strict} morphism $[K_1\subseteq K_2\subseteq E \supseteq M]
\to [K'_1\subseteq K'_2\subseteq E' \supseteq M']$ is a morphism $f
\: E\to E'$ in $\A$ respecting  the three subobjects. Let us denote
the category of such 4-tuples and strict morphisms between them by
$\C^{st}$. Note that $\C^{st}$ is naturally identified with a full
subcategory of $\Ch^{st}(\B)$. We denote  $\qis \cap \C^{st}$ by the
same notation $\qis$; see Section \ref{S:complexesstrict} for
notation. In other words, a  morphism in $\C^{st}$ is in $\qis$ if
it induces an isomorphism $K_1/K_2 \risom K'_1/K'_2$.

\begin{prop}{\label{P:C}}
The inclusion $i \: \C^{st} \to \C$ induces an additive equivalence
of abelian categories $\qis^{-1}\C^{st} \risom \C$. Furthermore, the
functor
         $$H \: \C^{st} \to \A,$$
            $$[K_1\subseteq K_2\subseteq E \supseteq M] \mapsto
            K_2/K_1$$
fits into the following commutative (up to a canonical natural
transformation) diagram:
   $$\xymatrix@C=16pt@R=14pt@M=6pt{\C^{st} \ar[r]^{H} \ar[d]_(0.4)i & \A \\
                            \C\cong \qis^{-1}\C^{st} \ar[ru] & }$$
 (Also, note that $\qis^{-1} \subset \C^{st}$ is exactly
 the class of morphisms $\C^{st}$ that map to isomorphisms in $\A$ under
 $H$.)
\end{prop}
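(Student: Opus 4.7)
The statement bundles three claims---the equivalence $\qis^{-1}\C^{st} \risom \C$, the factorization of $H$, and the characterization of $\qis$---which I plan to handle by viewing $\C^{st}$ as a full subcategory of $\Chst(\B)$ and importing the machinery of Sections \ref{S:complexesstrict}--\ref{S:complexes} one level up: $\B$ in place of $\A$, and $(\mcT',\mcF')=(\mcF[1],\mcT)$ in place of $(\mcT,\mcF)$.

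I would first dispatch the characterization of $\qis$, which is essentially tautological once unpacked. A morphism in $\C^{st}$, viewed as a strict chain map between length-$2$ complexes $[\bbX \torel{d} \bbY]$ in $\B$, is a quasi-isomorphism in $\Chst(\B)$ iff it induces isomorphisms on the two cohomology objects $\ker d,\,\coker d \in \B$. By the kernel/cokernel formulas of $\S$\ref{SS:kernel}, for objects of $\C$ these are isomorphic to $[0 \to T]$ and $[F \to 0]$ respectively, where $T \in \mcT$ and $F \in \mcF$ are the torsion and free parts of $K_2/K_1 = H^{-1}(C(d))$ and fit in a short exact sequence $0 \to T \to K_2/K_1 \to F \to 0$. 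Since $\mcT,\mcF \hookrightarrow \B$ are fully faithful, a morphism in $\C^{st}$ lies in $\qis$ iff it induces an iso on both $T$ and $F$, which by the $5$-lemma is equivalent to inducing an iso on $K_2/K_1 = H$. The factorization of $H$ through $\C \cong \qis^{-1}\C^{st}$ then follows from the universal property of localization.

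For the main equivalence $\qis^{-1}\C^{st} \risom \C$, the plan is to apply the torsion-pair version of Lemma \ref{L:localizing} to $\B$ equipped with $(\mcT',\mcF')$, taking $[a,b]=[-1,0]$. The conditions $K_2 \cap M \in \mcF$ and $E/(K_1+M) \in \mcT$ from $\S$\ref{SS:objectsC} are precisely what is needed to say that the corresponding length-$[-1,0]$ complex $[\bbX \to \bbY]$ in $\Chst(\B)$ has $\ker d \in \mcF'$ and $\coker d \in \mcT'$. The lemma, combined with Proposition \ref{P:derived}, then yields a fully faithful functor $\qis^{-1}\C^{st} \hookrightarrow \mcD(\B)$; essential surjectivity onto the heart of the $t$-structure on $\mcD(\B)$ corresponding to $(\mcT',\mcF')$ is immediate because every object of $\C$ has a butterfly representative in $\C^{st}$.

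The hard part will be identifying this essential image with $\C$ itself: morphisms in $\C$ are by definition butterflies in $\B$, whereas morphisms in the heart of $\mcD(\B)$ are a priori arbitrary derived-category morphisms. These two descriptions coincide by the equivalence noted at the start of Section \ref{S:B} (via \oldcite{Maps}, Section 9), applied one level up with $\B$ in the role of $\A$---the same principle that allowed $\B$ to be described by butterflies in the first place.
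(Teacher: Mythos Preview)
Your approach is correct and matches the paper's: the paper's proof is a two-line invocation of Lemma~\ref{L:localizing}, and you have correctly unpacked what lies behind that citation (the torsion-pair case of Lemma~\ref{L:localizing} with $[a,b]=[-1,0]$, then Proposition~\ref{P:derived}, then the identification of $\C$ with the heart via \cite{Maps}).

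One small inaccuracy worth fixing: the conditions $K_2\cap M\in\mcF$ and $E/(K_1+M)\in\mcT$ from \S\ref{SS:objectsC} do \emph{not} encode $\ker d\in\mcF'$ and $\coker d\in\mcT'$ as you claim. They encode $H^{-1}(\bbY)\in\mcF$ and $H^0(\bbX)\in\mcT$, i.e.\ that $\bbX$ and $\bbY$ are objects of $\B$ at all. What places the two-term complex $[\bbX\torel{d}\bbY]$ in the correct $t$-window are the \emph{other} conditions in \S\ref{SS:objectsC}, namely that $\kappa$ is a monomorphism and $\rho$ an epimorphism: these force $H^{-1}(\ker d)=0$ and $H^0(\coker d)=0$, giving $\ker d\cong[0\to T]\in\mcF'$ and $\coker d\cong[F\to 0]\in\mcT'$. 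Since both sets of conditions are present in the definition of $\C^{st}$, this does not affect the validity of your argument, only its bookkeeping.
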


\begin{proof}
 That  $i \: \C^{st} \to \C$ induces an equivalence of
 additive categories $\qis^{-1}\C^{st} \risom \C$ follows from Lemma
 \ref{L:localizing}.  The existence of the commutative
 triangle is trivial.
\end{proof}

The functor $\C \to \A$, which we will, by abuse of notation, denote
by $H$, is known to be an equivalence of abelian categories whenever
$(\mcT,\mcF)$ is either tilting or cotilting. Let us give a quick
proof of this.

\begin{thm}[Happel-Reiten-Smal\o]{\label{T:HRS2}}
   Assume $(\mcT,\mcF)$ is either tilting or cotilting.
   Then, the functor $H \: \C \to \A$ defined above
   is an additive equivalence of
   categories.
\end{thm}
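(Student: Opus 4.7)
The plan is to treat the cotilting case; the tilting case is dual. I will establish essential surjectivity by an explicit construction and then deduce full faithfulness by iterating Theorem~\ref{T:HRS}.

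For essential surjectivity, given $A\in\A$ the cotilting hypothesis supplies a short exact sequence $0\to K\to F\to A\to 0$ with $F\in\mcF$ (and hence $K\in\mcF$ by Lemma~\ref{L:torsion2}). I would send $A$ to the $4$-tuple $(K_1,K_2,E,M):=(K,F,F,F)$ described in $\S$\ref{SS:objectsC}. The compatibility conditions hold trivially: $K_2\cap M=F\in\mcF$ and $E/(K_1+M)=F/(K+F)=0\in\mcT$. By definition $H$ sends this tuple to $K_2/K_1=F/K\cong A$.

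For full faithfulness, I would apply Theorem~\ref{T:HRS} twice. First one verifies that $(\mcT',\mcF')=(\mcF[1],\mcT)$ on $\B$ inherits a tilting or cotilting hypothesis from $(\mcT,\mcF)$ on $\A$: given $\bbY=[Y^{-1}\to Y^0]\in\B$, a cotilting epimorphism $F\twoheadrightarrow Y^0$ in $\A$, together with Lemma~\ref{L:torsion2} and the butterfly-composition formulas of $\S$\ref{SS:compose}, produces the required monomorphism from $\bbY$ into an object of $\mcT'=\mcF[1]$ (or, dually, the required epimorphism from an object of $\mcF'=\mcT$). Theorem~\ref{T:HRS} then supplies triangle equivalences
$$\mcD^b(\C)\risom\mcD^b(\B)\risom\mcD^b(\A),$$
each implemented by the corresponding forgetful functor. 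Restricted to the heart $\C\subset\mcD^b(\B)$, the composite is a fully faithful embedding $\C\hookrightarrow\mcD^b(\A)$ whose essential image is $\A$ placed in a single fixed degree, and the induced equivalence sends the $4$-tuple $[K_1\subseteq K_2\subseteq E\supseteq M]$ to $K_2/K_1$. This identifies the composite on hearts with $H$ (up to the shift), so $H$ inherits its full faithfulness.

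The main obstacle will be the cohomological identification in this last step. Concretely, one must compute $\Phi\circ\Tot$ applied to the decorated complex attached by Theorem~\ref{T:main} to a $4$-tuple $[K_1\subseteq K_2\subseteq E\supseteq M]\in\C\subset\mcD^b(\B)$, and verify that the resulting object of $\mcD^b(\A)$ has cohomology concentrated in a single degree equal to $K_2/K_1$. The conditions $K_2\cap M\in\mcF$ and $E/(K_1+M)\in\mcT$ are engineered precisely to force the vanishing of the two ``outer'' cohomologies via Proposition~\ref{P:cohofcoh}, leaving only $K_2/K_1$ as the surviving one. Once this computation is carried out, the theorem follows at once.
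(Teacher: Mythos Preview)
Your essential surjectivity argument matches the paper's construction exactly (the paper's $Q'$ in the cotilting case sends $A$ to $[\ker p\subseteq F\subseteq F\supseteq F]$). The divergence is in full faithfulness: the paper does not invoke Theorem~\ref{T:HRS} at all, but instead extends the object-level construction $Q$ (or $Q'$) to a genuine functor $\A\to\C$ by defining its effect on morphisms via explicit roofs in $\qis^{-1}\C^{st}$, and then checks by hand that this is inverse to $H$. This is completely elementary and self-contained.

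Your route via two applications of Theorem~\ref{T:HRS} is conceptually appealing and can be made to work, but it carries real costs. First, the claim that $(\mcT',\mcF')$ on $\B$ inherits a (co)tilting property from $(\mcT,\mcF)$ on $\A$ is true (this is standard HRS theory: cotilting on $\A$ corresponds to tilting on $\B$), but your sketch---``a cotilting epimorphism $F\twoheadrightarrow Y^0$ \ldots produces the required monomorphism from $\bbY$ into an object of $\mcF[1]$''---does not actually construct such a monomorphism; one needs a genuine argument here, and it is not as immediate as you suggest. Second, the cohomological identification you flag as the main obstacle is indeed the crux: you must show that for $[K_1\subseteq K_2\subseteq E\supseteq M]\in\C$, the corresponding object of $\mcD^b(\A)$ has cohomology concentrated in a single degree and equal to $K_2/K_1$. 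This does follow from Proposition~\ref{P:cohomology} together with the description of $\ker P$ and $\coker P$ in \S\ref{SS:kernel} (the conditions force $H^{-1}(\ker P)=0=H^0(\coker P)$, and the remaining extension rebuilds $H^{-1}(C(P))=K_2/K_1$), but you should carry this out rather than leave it as a sketch.

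In short: your argument is salvageable, but the paper's direct construction of the inverse functor is both shorter and avoids the two nontrivial verifications your approach requires.
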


\begin{proof}
   Assume $(\mcT,\mcF)$ is tilting.
   We define the inverse functor $Q\: \A \to  \C$ as
   follows. For every $A \in \A$ choose a monomorphism
   $i_A \: A \hookrightarrow T$ with $T \in \mcT$.
   The effect of $Q$ on objects is $Q(A):=[0\subseteq A \subseteq T
   \supseteq 0]$. To define the effect of $Q$ on morphisms, let
   $f \: A \to A'$ be  a morphism
    in $\A$, and let $Q(A')=[0\subseteq A' \subseteq T' \supseteq 0]$.
    Set $X=[0\subseteq A  \subseteq T\oplus T' \supseteq 0]$, where
    $A \hookrightarrow  T\oplus T'$ is the map $(i_A,i_{A'}\circ f)$.
    We define $Q(f)$ to be the roof
      $$\xymatrix@C=10pt@R=8pt@M=6pt{
        & X\ar[ld]^(0.35){\sim}_{(\id, \pr_1)}
                           \ar[rd]^{(f,\pr_2)} & \\
                                     Q(A) && Q(A'). }$$
    It is easy to check that $Q$ is a well-defined functor and that
    it is an inverse equivalence to $H$.

    Proof in the cotilting case  is similar. The functor $Q' \: \A
    \ \to \C$ sends an object $A \in \A$ to $[\ker{p}\subseteq F
    \subseteq F \supseteq F]$, where $p \: F \to A$ is a choice of an
    epimorphism with $F \in \mcF$.
\end{proof}

\section{The DG structure}{\label{S:DG}}

The category $\Ch(\B)$ is naturally a DG category, and so is
$\Ch(\A,\mcT,\mcF)$, as we will see shortly. So one would like to
strengthen Theorem \ref{T:derived} to a statement about DG
categories. We will do that in this and the next section.

We will assume that $\A$ is a $K$-linear abelian category, where $K$
is a commutative  ring.

\subsection{The symmetric monoidal category $\Dec(K)$}{\label{SS:Dec}}

 Let $\Dec(K)$ denote the category of decorated complexes
 of $K$-modules. This is a symmetric monoidal category.
 Given two decorated complexes $(\Eb,\Mb)$
 and $(\Fb,\Nb)$, their tensor product is the
  complex $\Eb\otimes \Fb$, decorated with
  the image of
 $(\Eb\otimes \Nb)\oplus (\Mb\otimes \Fb)$.

 We discuss the inner homs in $\S$\ref{SS:Enrich1}.

\subsection{The $\Dec(K)$ enrichment of $\Dec(\A)$}{\label{SS:Enrich1}}

We will introduce a  $\Dec(K)$ enrichment of $\Dec(\A)$, which we
denote by $\Decfrak(\A)$. This is stronger than a DG structure.

Given two objects $(\Eb,\Mb)$ and $(\Fb,\Nb)$ in $\Dec(\A)$, we
define the $K$-complex
      $$\homfh\big((\Eb,\Mb),(\Fb,\Nb)\big)$$
to be the subcomplex of the usual mapping complex
$\homf\big(\Eb,\Fb)$ consisting of maps satisfying certain
compatibility with respect to $\Mb$ and $\Nb$. More precisely, an
element in the $k^{th}$ term of the complex
$\homfh\big((\Eb,\Mb),(\Fb,\Nb)\big)$ is a collection of morphisms
$g_n \: E^n\to F^{n+k}$, $n\in\bbZ$, such that the following
conditions are satisfied:

\label{Page:hom}

\begin{itemize}
  \item[$\mathbf{1}$)] For every $n$, $g_n(M^n)\subseteq N^{n+k}$.

  \item[$\mathbf{2}$)] For every $n$, $\delta_F\circ g_n-(-1)^kg_{n+1}\circ
  \delta_E$ maps $M^n$ to $N^{n+k+1}$. Equivalently, the following
  diagram commutes:
           $$\xymatrix@C=20pt@R=14pt@M=6pt{
                              M^n \ar[d]_{g_n} \ar[r]^{\delta_E}
                                        & E^{n+1} \ar[d]^{(-1)^k g_{n+1}} \\
                  F_{n+k} \ar[r]_(0.3){\delta_F} & F^{n+k+1}/N^{n+k+1} }$$
\end{itemize}

The differential $d$ on $\homfh\big((\Eb,\Mb),(\Fb,\Nb)\big)$ is
defined as usual:
   $$d \:
   \homfh\big((\Eb,\Mb),(\Fb,\Nb)\big)^k
    \to
    \homfh\big((\Eb,\Mb),(\Fb,\Nb)\big)^{k+1},$$
  $$\{g_n\}_{n\in\bbZ} \mapsto \{\delta_F\circ g_n-(-1)^kg_{n+1}\circ
  \delta_E\}_{n\in\bbZ}.$$
It is not hard to verify that the sequence $\{\delta_F\circ
g_n-(-1)^kg_{n+1}\circ
  \delta_E\}_{n\in\bbZ}$ also satisfies ($\mathbf{1}$) and ($\mathbf{2}$).

There is a natural decoration
$$\homfone\big((\Eb,\Mb),(\Fb,\Nb)\big) \subseteq
     \homfh\big((\Eb,\Mb),(\Fb,\Nb)\big)$$
on $\homfh\big((\Eb,\Mb),(\Fb,\Nb)\big)$ whose $k^{th}$ term is, by
definition, the $K$-submodule of
$\homfh\big((\Eb,\Mb),(\Fb,\Nb)\big)^k$
 consisting of those sequences
$\{g_n\}_{n\in\bbZ}$ satisfying the following axiom:

\begin{itemize}
  \item[$\blacktriangleright$] For every $n$, $g_n(M^n)=0$ and
   $g_n(E^n) \subseteq N^n$.  Note that this condition implies
   ($\mathbf{1}$) and  ($\mathbf{2}$).
\end{itemize}

\subsection{The $\Dec(K)$ enrichment of $\Ch(\A,\mcT,\mcF)$}
{\label{SS:Enrich2}}

The category  $\Ch(\A,\mcT,\mcF)$, being a full subcategory of
$\Dec(\A)$, inherits a $\Dec(K)$ enrichment, which we denote by
$\Chfrak(\A,\mcT,\mcF)$. This $\Dec(K)$ enrichment has a special
feature that is described in the next proposition. We simplify the
notation by denoting $\homfone\big((\Eb,\Mb),(\Fb,\Nb)\big)$ by
$\homfone$ and $\homfh\big((\Eb,\Mb),(\Fb,\Nb)\big)$ by $\homfh$.

\begin{prop}{\label{P:enrich}}
  For every $k$, $\homfone^{k}\cap d^{-1}(\homfone^{k+1})=0$.
  That is $H^{-1,k}(\homfh,\homfone)=0$, for all $k$; see
  $\S$\ref{SS:deccoh} for notation.
\end{prop}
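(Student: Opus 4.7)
The plan is to unpack what membership in $\homfone^k$ and in $d^{-1}(\homfone^{k+1})$ mean for a sequence $\{g_n\}$, and then use the torsion-pair compatibility of the decorations on $(\Eb,\Mb)$ and $(\Fb,\Nb)$ to conclude that each $g_n$ must factor through a map from an object of $\mcT$ to an object of $\mcF$, hence must vanish.

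First I would fix $\{g_n\} \in \homfone^k \cap d^{-1}(\homfone^{k+1})$ and write $h_n := \delta_F\circ g_n - (-1)^k g_{n+1}\circ\delta_E$. By hypothesis, for every $n$:
\begin{itemize}
\item[(a)] $g_n(M^n)=0$ and $g_n(E^n)\subseteq N^{n+k}$;
\item[(b)] $h_n(M^n)=0$ and $h_n(E^n)\subseteq N^{n+k+1}$.
\end{itemize}

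Next I would extract two consequences. From (a) applied at index $n+1$ we have $g_{n+1}(\delta_E(E^n))\subseteq N^{n+k+1}$, so (b) forces $\delta_F(g_n(E^n))\subseteq N^{n+k+1}$. Combined with $g_n(E^n)\subseteq N^{n+k}$ this gives
$$g_n(E^n)\ \subseteq\ N^{n+k}\cap \delta_F^{-1}(N^{n+k+1})\ =\ H^{-1,n+k}(\Fb,\Nb),$$
which lies in $\mcF$ because $(\Fb,\Nb)$ is compatible with $(\mcT,\mcF)$. Meanwhile, applying (b) at index $n-1$ together with $g_{n-1}(M^{n-1})=0$ yields $g_n(\delta_E(M^{n-1}))=0$. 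Since also $g_n(M^n)=0$, the map $g_n$ factors through
$$E^n/(M^n+\delta_E(M^{n-1}))\ =\ H^{0,n-1}(\Eb,\Mb),$$
which lies in $\mcT$ by compatibility of $(\Eb,\Mb)$.

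Assembling these two observations, $g_n$ induces a morphism $H^{0,n-1}(\Eb,\Mb)\to H^{-1,n+k}(\Fb,\Nb)$ with source in $\mcT$ and target in $\mcF$. The defining axiom of a torsion pair forces this morphism to be zero, so $g_n=0$ for every $n$, proving $H^{-1,k}(\homfh,\homfone)=0$. The one place that requires a little care is recognizing that one must invoke both the $h_n$ condition (to get the $\mcF$-containment of the image) and the $h_{n-1}$ condition (to get factorization through the $\mcT$-quotient); once both are in hand the torsion-pair axiom closes the argument immediately.
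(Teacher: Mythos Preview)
Your proof is correct and follows essentially the same argument as the paper: you unpack conditions (a) and (b), shift indices to show that $g_n$ factors through $H^{0,n-1}(\Eb,\Mb)\in\mcT$ on the source side and lands in $H^{-1,n+k}(\Fb,\Nb)\in\mcF$ on the target side, then invoke the torsion-pair axiom. The only difference is notational (you name the differential $h_n$ explicitly), and your closing remark about needing both the $h_n$ and $h_{n-1}$ conditions matches exactly the paper's use of (iii)--(iv) at the two adjacent indices.
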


\begin{proof}
  Let $\{g_n\}_{n\in\bbZ}$ be in
  $\homfone^{k}\cap d^{-1}(\homfone^{k+1})$.
  Then the following are true for all $n$:
   \begin{itemize}
    \item[i)] $g_n(M^n)=0$,
    \item[ii)] $g_n(E^n) \subseteq N^{n+k}$,
    \item[iii)] $\delta_F\circ g_n-(-1)^kg_{n+1}\circ
         \delta_E$ vanishes on $M^n$, and
    \item[iv)]  $\delta_F\circ g_n-(-1)^kg_{n+1}\circ
         \delta_E$ maps $E^n$ to $N^{n+k+1}$.
   \end{itemize}
  It follows from (i) and (iii), both with $n-1$, that $g_n$
  vanishes on $\delta_E(M^{n-1})$. This implies that $g_n$ factors
  through
        $$H^{0,n-1}(\Eb,\Mb)=E^n/(M^n+\delta_E(M^{n-1}).$$
   Also, it follows from (iv), with $n$, and (ii),
  with $n+1$, that $g_n(E^n) \subseteq \delta_F^{-1}(N^{n+k+1})$.
  Therefore, $g_n$ factors through
  $$H^{-1,n+k}(\Fb,\Nb)=N^{n+k}\cap \delta_F^{-1}(N^{n+k+1}).$$
  Put
  together, we see that $g_n$ factors through a map
       $$H^{0,n-1}(\Eb,\Mb) \to H^{-1,n+k}(\Fb,\Nb).$$
  Since the left hand side belongs to $\mcT$ and the right hand side
  belongs to $\mcF$, this map has to be zero. Hence, $g_n=0$.
\end{proof}

\begin{cor}{\label{C:enrich}}
  The decorated complex $(\homfh,\homfone)$ is naturally
  quasi-isomorphic to the  complex
  $\mcH^0(\homfh,\homfone)[-1]$ endowed with the zero decoration; see
  $\S$\ref{SS:deccoh} for notation.
\end{cor}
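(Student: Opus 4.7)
The plan is to exhibit an explicit natural morphism of decorated complexes
$$\pi \colon (\homfh,\homfone) \longrightarrow \big(\mcH^0(\homfh,\homfone)[-1],\, 0\big)$$
and check it is a quasi-isomorphism, by combining Proposition \ref{P:enrich} with the long exact sequence of Proposition \ref{P:cohofcoh}. Since the $k$-th term of the target is
$$(\mcH^0[-1])^k = H^{0,k-1}(\homfh,\homfone) = \homfh^k / \big(\homfone^k + d(\homfone^{k-1})\big),$$
the obvious candidate for the component $\pi_k$ is the quotient map $\homfh^k \twoheadrightarrow H^{0,k-1}$.

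First I would verify the formal properties of $\pi$. The differential on $\mcH^0$ is by construction induced by $d$, so $\{\pi_k\}$ is a chain map by definition. Moreover $\pi_k(\homfone^k)=0$, so $\pi$ carries the decoration $\homfone$ into the zero decoration on $\mcH^0[-1]$; hence $\pi$ is a bona fide morphism in $\Dec(\A)$. Naturality of $\pi$ in the two pairs $(\Eb,\Mb)$ and $(\Fb,\Nb)$ that go into $\homfh$ and $\homfone$ follows because the formation of $\homfh$, $\homfone$, and $\mcH^0$ is functorial on each side.

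Next I would show that the underlying chain map $\homfh \to \mcH^0(\homfh,\homfone)[-1]$ is a quasi-isomorphism of complexes of $K$-modules; this is what ``quasi-isomorphism in $\Dec(\A)$'' means by definition. By Proposition \ref{P:enrich}, $H^{-1,k}(\homfh,\homfone)=0$ for every $k$, so the complex $\mcH^{-1}(\homfh,\homfone)$ is the zero complex and in particular all of its cohomologies vanish. Plugging this into the long exact sequence of Proposition \ref{P:cohofcoh} collapses it to the natural isomorphisms
$$H^n(\homfh) \;\xrightarrow{\;\sim\;}\; H^{n-1}\big(\mcH^0(\homfh,\homfone)\big) \;=\; H^n\big(\mcH^0(\homfh,\homfone)[-1]\big).$$
Equivalently, one may use the alternative description given in the remark after Proposition \ref{P:cohofcoh}: the map $\homfh/\mcH^{-1} \to \mcH^0[-1]$ is always a quasi-isomorphism, and in the present situation $\mcH^{-1}=0$ so this map is exactly $\pi$.

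The only substantive point in the argument is the vanishing $H^{-1,k}=0$, which is precisely Proposition \ref{P:enrich} and already proved; beyond that, the work is just to keep the indexing between $H^{0,n}$, $\mcH^0$, and $\mcH^0[-1]$ straight and to observe that the map whose existence is guaranteed by the long exact sequence is the tautological projection $\pi$. I therefore do not anticipate any real obstacle.
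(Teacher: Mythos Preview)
Your argument is correct and is exactly the intended one: the paper states this as an immediate corollary of Proposition~\ref{P:enrich}, and the details you supply (the quotient map $\pi$ to $\mcH^0[-1]$, together with either the long exact sequence of Proposition~\ref{P:cohofcoh} or its alternative form $E/\mcH^{-1}\to\mcH^0[-1]$ with $\mcH^{-1}=0$) are precisely what is implicit. One cosmetic slip: the decorated complex $(\homfh,\homfone)$ lives in $\Dec(K)$, not $\Dec(\A)$.
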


This corollary means that $\Chfrak(\A,\mcT,\mcF)$ should simply be
thought of as a DG category, with the hom complexes being
$\mcH^0(\homfh,\homfone)[-1]$.

\vspace{0.1in} \noindent{\bf Abuse of notation.} We denote the DG
category whose objects are the ones of $\Ch(\A,\mcT,\mcF)$ and whose
hom complexes are $\mcH^0(\homfh,\homfone)[-1]$ also by
$\Chfrak(\A,\mcT,\mcF)$.

\subsection{DG structures on $\Ch(\B)$ and $\Chst(\B)$}{\label{SS:DGChst}}

Being the category of chain complexes in a $K$-linear abelian
category, $\Ch(\B)$ carries  a natural $K$-linear DG structure,
which we denote by $\Chfrak(\B)$.
 The DG
 structure of $\Chfrak(\B)$
induces a DG structure on $\Chst(\B)$ as well, which we denote by
$\Chstfrak(\B)$.  By definition, an element in
$\homf_{\Chstfrak(\B)}(\bfX,\bfY)^k\subseteq\homf_{\Chfrak(\B)}(\bfX,\bfY)^k$
is a sequence of morphisms $h_n \: \,^n\bbX \to \,^{n+k}\bbY$, $n
\in \bbZ$, satisfying the following conditions:
\begin{itemize}
  \item[$\blacktriangleright$] Every $h_n$ is strict;

  \item[$\blacktriangleright$] The dotted arrow in the following
  diagram can be filled:
       $$\xymatrix@=10pt@M=8pt@C=14pt{^{n-1}X^0  \ar[d]_{h_{n-1}^0} & E^n\ar[l]
                      \ar@{..>}[d]  &  \,^{n}X^{-1} \ar[d]^{h_{n}^{-1}} \ar[l] \\
                              ^{n-1}Y^0  & F^n \ar[l] &  \,^{n}Y^{-1} \ar[l] }$$
  Here, $^n\bbX=[\,^{n}X^{-1} \to \,^{n}X^{0}]$,
  $^n\bbY=[\,^{n}Y^{-1} \to \,^{n}Y^{0}]$,
  and $E^n$ and $F^n$ are the extensions
  that define the morphisms
  $d \:\,^{n-1}\bbX \to\,^n\bbX$ and $d \:\,^{n-1}\bbY \to\,^n\bbY$.
\end{itemize}

That $\homf_{\Chstfrak(\B)}(\bfX,\bfY)$ is indeed a subcomplex of
$\homf_{\Chfrak(\B)}(\bfX,\bfY)$ is an easy exercise (it also
follows from the proof of Proposition \ref{P:DGeq} in the next
section).

\subsection{Remark: decorations of length $l$}{\label{SS:remark}}

 A decorated complex can be thought of as a complex each of whose
 terms is equipped with a length one filtration (but the
 differentials do not necessarily respect the filtrations). We can
 drop the requirement on the length of the filtration and consider
 complexes with {\em length $l$ decorations}, $0\leq l\leq \infty$.
 It turns out that the category $\Dec^l(K)$ of chain complexes of
 $K$-modules with length $l$ decorations has a natural closed
 monoidal structure, and for every $K$-linear category $\A$, the
 category $\Dec^l(\A)$ of chain complexes in $\A$ with length $l$
 decorations is naturally enriched over $\Dec^l(K)$. The homological
 algebra of decorated complexes, as described in
 $\S$\ref{SS:homological},
  carries over to $\Dec^l(K)$.

\section{The derived DG equivalence between
   $\Chfrak^b(\B)$ and $\Chfrak^b(\A,\mcT,\mcF)$}{\label{S:DGEquiv}}

 The  functor $G \: \Ch(\A,\mcT,\mcF) \to
\Ch(\B)$    defined in the proof of Theorem \ref{T:main} can be
enriched to a DG functor $\mfG \: \Chfrak(\A,\mcT,\mcF) \to
\Chstfrak(\B)$ in the obvious way; see the proof of Proposition
\ref{P:DGeq} below. We will use this to construct a DG equivalence
between $\Chfrak^b(\B)$ and $\Chfrak^b(\A,\mcT,\mcF)$ strengthening
the derived equivalence of Theorem \ref{T:derived}; see Theorem
\ref{T:derequiv} below.

 \begin{prop}{\label{P:DGeq}}
   The functor $\mfG$ induces a DG equivalence
     $$\mfG \: \Chfrak(\A,\mcT,\mcF) \to \Chstfrak(\B).$$
\end{prop}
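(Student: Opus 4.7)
The plan is to construct $\mfG$ explicitly on hom complexes and show it is an isomorphism; combined with essential surjectivity (immediate from Theorem \ref{T:main}), this gives the DG equivalence.

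Recall from the proof of Theorem \ref{T:main} that for $(\Eb,\Mb)\in\Ch(\A,\mcT,\mcF)$, one has $\mfG(\Eb,\Mb)=\bfX$ with ${}^{n}X^{-1}=M^{n}$, ${}^{n}X^{0}=E^{n+1}/M^{n+1}$, and the NE-SW extension defining the differential $d\:{}^{n-1}\bbX\to{}^{n}\bbX$ is exactly the original $E^{n}$, with $\iota\:M^{n}\hookrightarrow E^{n}$ the inclusion and $\kappa\:M^{n-1}\to E^{n}$ given by $\delta$. Given a degree-$k$ element $\{g_{n}\:E^{n}\to F^{n+k}\}_{n\in\bbZ}$ of $\homfh^{k}\bigl((\Eb,\Mb),(\Fb,\Nb)\bigr)$, I would define $\mfG(\{g_n\})$ to be the degree-$k$ element of $\homf_{\Chstfrak(\B)}\bigl(\mfG(\Eb,\Mb),\mfG(\Fb,\Nb)\bigr)$ whose $n$-th component is the strict morphism ${}^{n}\bbX\to{}^{n+k}\bbY$ with $h_{n}^{-1}:=g_{n}|_{M^{n}}\:M^{n}\to N^{n+k}$ and $h_{n}^{0}$ the map $E^{n+1}/M^{n+1}\to F^{n+k+1}/N^{n+k+1}$ induced by $g_{n+1}$, and whose dotted arrow (in the sense of $\S$\ref{SS:DGChst}) is $g_{n}$ itself. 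Both components are well-defined by condition $(\mathbf{1})$ from $\S$\ref{SS:Enrich1}.

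The core of the proof is then to verify that this is a bijection of graded modules which commutes with differentials. The strictness square for $h_n$ unwinds to the inclusion $(\delta_{F}g_{n}-g_{n+1}\delta_{E})(M^{n})\subseteq N^{n+k+1}$, which is precisely condition $(\mathbf{2})$ from $\S$\ref{SS:Enrich1} (up to the conventional sign $(-1)^k$). For the inverse, given a degree-$k$ element $\{h_{n}\}$ of $\homf_{\Chstfrak(\B)}$, its dotted arrows $\{\tilde h_{n}\:E^{n}\to F^{n+k}\}$ automatically lie in $\homfh^{k}$: condition $(\mathbf{1})$ comes from commutativity of the dotted diagram, and condition $(\mathbf{2})$ from strictness of $h_{n}$ combined with $h_{n}^{0}$ being induced by $\tilde h_{n+1}$. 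The two assignments are manifestly inverse to one another, and a short direct check shows that both differentials act by the common formula $\{g_{n}\}\mapsto\{\delta_{F}g_{n}-(-1)^{k}g_{n+1}\delta_{E}\}$.

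The main subtlety I expect to confront is matching sign conventions: reconciling the $(-1)^{k}$ in the DG hom complex of $\Chfrak(\B)$ (restricted to $\Chstfrak(\B)$) with the $(-1)^{k}$ appearing in the differential on $\homfh$. Once the signs are settled, everything else is routine bookkeeping built on the explicit descriptions of $\Chstfrak(\B)$ in $\S$\ref{SS:DGChst} and of $\homfh$ in $\S$\ref{SS:Enrich1}, and the DG equivalence follows.
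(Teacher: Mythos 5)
There is a genuine gap: you have misidentified the hom-complex of the DG category $\Chfrak(\A,\mcT,\mcF)$. By the ``Abuse of notation'' paragraph following Corollary \ref{C:enrich}, the hom-complex of $\Chfrak(\A,\mcT,\mcF)$ as a DG category is \emph{not} $\homfh\bigl((\Eb,\Mb),(\Fb,\Nb)\bigr)$ but rather the quotient complex
$\mcH^0(\homfh,\homfone)[-1]$, whose $k$-th term is $\homfh^{k}/\bigl(\homfone^{k}+d\homfone^{k-1}\bigr)$. Your map $\homfh^{k}\to\homf_{\Chstfrak(\B)}(\bfX,\bfY)^{k}$ is therefore not a bijection but only a surjection. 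Its kernel is visibly nontrivial: any $\{g_{n}\}\in\homfone^{k}$ (i.e.\ $g_{n}(M^{n})=0$ and $g_{n}(E^{n})\subseteq N^{n+k}$) has $g_{n}|_{M^{n}}=0$ and induces the zero map $E^{n+1}/M^{n+1}\to F^{n+k+1}/N^{n+k+1}$, so it goes to the zero strict morphism. The full kernel is $\homfone^{k}+d\homfone^{k-1}$, and identifying it requires Lemma \ref{L:null}: a strict morphism $(f^{-1},f^{0})$ in $\B$ is zero not only when $f^{-1}=0$ and $f^{0}=0$, but whenever $(f^{-1},f^{0})$ is null-homotopic as a chain map. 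This is the heart of the paper's argument and is absent from your proposal.

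The ``inverse'' you describe is also not well-defined for two reasons. First, in the definition of $\Chstfrak(\B)$ the dotted arrow is an existence condition, not a datum; two fillers differ by a map $\,^{n-1}X^{0}\to\,^{n+k}Y^{-1}$, i.e.\ precisely by an element of $\homfone^{k}$. Second, a strict morphism in $\B$ is an isomorphism class of butterfly diagrams, hence only a homotopy class of pairs $(h_{n}^{-1},h_{n}^{0})$, so the pair itself is not determined. Both ambiguities are exactly what the quotient by $\homfone^{k}+d\homfone^{k-1}$ absorbs. To repair the argument you must (i) use $\mcH^{0}(\homfh,\homfone)[-1]$ as the source hom-complex, (ii) verify surjectivity of $\homfh^{k}\to\homf_{\Chstfrak(\B)}^{k}$, and (iii) compute the kernel via Lemma \ref{L:null}, showing it equals $\homfone^{k}+d\homfone^{k-1}$; then the induced map on the quotient is the desired isomorphism.
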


Before proving the proposition, we prove a lemma.

   \begin{lem}{\label{L:null}}
    A strict morphism $(f^{-1},f^0) \: [X^{-1} \torel{d_X} X^0]
    \to [Y^{-1} \torel{d_Y} Y^0]$
    in $\B$ is the zero morphism if and only if
     there exists $s \: X^0 \to
    Y^{-1}$ such that $f^0=d_Y\circ s$ and $f^{-1}=s\circ d_X$,
    that is, $(f^{-1},f^0)$ is null-homotopic in $\Ch(\A)$.
   \end{lem}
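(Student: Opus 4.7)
The plan is to unravel the butterfly-diagram description of morphisms from $\S$\ref{SS:definition}. By the explicit formula there, a strict morphism $(f^{-1},f^0)\:[X^{-1}\to X^0]\to[Y^{-1}\to Y^0]$ is represented by the butterfly whose middle object is $E:=X^0\oplus Y^{-1}$, with structure maps
$\kappa_f=(d_X,-f^{-1})$, $\iota=\inc_2$, $\sigma=\pr_1$, and $\rho_f=f^0+d_Y$.
In particular, the zero morphism admits the strict representative with $f^{-1}=0$, $f^0=0$, whose middle object is again $X^0\oplus Y^{-1}$ but now with the trivially split $\kappa_0=(d_X,0)$ and $\rho_0=d_Y\circ\pr_2$.

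Next I would use the fact that two butterflies define the same morphism in $\B$ if and only if there is an isomorphism between the middle objects intertwining all four structure maps (such an isomorphism being unique when it exists). So I parametrize isomorphisms $\varphi\:X^0\oplus Y^{-1}\to X^0\oplus Y^{-1}$ between the butterflies of $(f^{-1},f^0)$ and $(0,0)$. The conditions $\pr_1\circ\varphi=\pr_1$ and $\varphi\circ\inc_2=\inc_2$ immediately force $\varphi$ to have the form $\varphi(x,y)=(x,\,s(x)+y)$ for a unique morphism $s\:X^0\to Y^{-1}$ (by additivity, $\varphi_2(x,y)=\varphi_2(x,0)+\varphi_2(0,y)$).

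It then remains to translate the two remaining compatibility conditions. The equation $\varphi\circ\kappa_f=\kappa_0$ says $(d_X(x),\,s(d_X(x))-f^{-1}(x))=(d_X(x),0)$, i.e.\ $s\circ d_X=f^{-1}$. The equation $\rho_0\circ\varphi=\rho_f$ says $d_Y(s(x)+y)=f^0(x)+d_Y(y)$, i.e.\ $d_Y\circ s=f^0$. Conversely, given any $s$ satisfying these two identities, $\varphi(x,y):=(x,s(x)+y)$ is a (manifestly invertible) automorphism of $X^0\oplus Y^{-1}$ which supplies the required isomorphism of butterflies. Since the two identities are exactly the defining conditions of a chain homotopy in $\Ch(\A)$, both directions of the lemma follow. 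No serious obstacle is anticipated; the whole proof reduces to carefully unpacking the butterfly convention and the remark on uniqueness of intertwining isomorphisms.
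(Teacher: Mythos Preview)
Your proof is correct and is exactly the intended argument: the paper's proof is the single line ``Use the description of strict morphisms in $\S$\ref{SS:definition}'', and you have carried out precisely that unpacking of the butterfly presentation of a strict morphism and compared it to the butterfly of the zero morphism. No gap.
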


\begin{proof}
  Use the description of strict morphisms in
  $\S$\ref{SS:definition}.
\end{proof}

\begin{proof}[Proof of Proposition \ref{P:DGeq}]
 Let $(\Eb,\Mb)$ and $(\Fb,\Nb)$
 be objects in $\Chfrak(\A,\mcT,\mcF)$, and set
 $\bfX=G(\Eb,\Mb)$ and $\bfY=G(\Fb,\Nb)$,
 as in the proof of Theorem \ref{T:main} (page
 \pageref{Page:proof}).

  Consider an element $\gamma$ in
$\homfh\big((\Eb,\Mb),(\Fb,\Nb)\big)^k$ given by the sequence $g_n
\: E^n\to F^{n+k}$, $n\in\bbZ$. By ($\mathbf{1}$) of page
\pageref{Page:hom}, each $g_n$ induces
 maps $\underline{g}_n\: M^n \to N^{n+k}$ and
 $\overline{g}_n\: E^{n}/M^{n} \to
 F^{n+k}/N^{n+k}$. Condition ($\mathbf{2}$) guarantees that the map
$$\mfG(\gamma)_n:=\big((-1)^k\underline{g}_n,\overline{g}_{n+1}\big)
     \: [M^n \to E^{n+1}/M^{n+1}] \to
                        [N^{n+k}\to F^{n+k+1}/N^{n+k+1}]$$
 is a morphism in $\B$. (Recall that, by definition,
 $^n\bbX=[M^n \to E^{n+1}/M^{n+1}]$ and  $\,^{n+k}\bbY=[N^{n+k}\to
 F^{n+k+1}/N^{n+k+1}]$.)
We define $\mfG(\gamma)$ to be the sequence $\mfG(\gamma)_n$,
  $n\in\bbZ$. It is easy to see that the map of $K$-modules
     $$\homfh\big((\Eb,\Mb),(\Fb,\Nb)\big)^k
                       \to \homf_{\Chstfrak(\B)}(\bfX,\bfY)^{k}$$
             $$ \qquad \gamma \mapsto \mfG(\gamma) $$
 is surjective. We claim that its kernel is equal to
   $$\homfone\big((\Eb,\Mb),(\Fb,\Nb)\big)^k
   + d
   \homfone\big((\Eb,\Mb),(\Fb,\Nb)\big)^{k-1}.$$
  By Lemma \ref{L:null}, applied to $\mfG(\gamma)_n$,  a sequence
  $g_n \: E^n\to F^{n+k}$, $n\in\bbZ$, is in the kernel of the
  above map
  if and only if there is a sequence $s_{n} \: E^{n}\to F^{n+k-1}$
   such that for all $n$
     \begin{itemize}
         \item[i)] \ $s_n(M^{n})=0$,
         \item[ii)] \ $s_n(E^{n})\subseteq N^{n+k-1}$,
         \item[iii)] \ $g_n-(-1)^ks_{n+1}\circ\delta_E $ vanishes
         on $M^n$, and
         \item[iv)] \ $g_n-\delta_F\circ s_n$ maps $E^n$ to
         $N^{n+k}$.

     \end{itemize}
      We clearly have
         $\sigma:=\{s_n\}_{n\in\bbZ}\in \homfone
             \big((\Eb,\Mb),(\Fb,\Nb)\big)^{k-1}.$
  It is also straightforward from conditions (i)-(iv) above that
         $$\gamma-d(\sigma)=\{g_n-\delta_F\circ
     s_n+(-1)^{k-1}s_{n+1}\circ\delta_E\}_{n\in\bbZ} \in
                    \homfone\big((\Eb,\Mb), (\Fb,\Nb)\big)^{k}.$$
  This proves our claim about the kernel.

  Abbreviating
  $\homfh\big((\Eb,\Mb),(\Fb,\Nb)\big)$
  and
  $\homfone\big((\Eb,\Mb),(\Fb,\Nb)\big)$
   to $\homfh$ and $\homfone$, respectively, we  summarize what
   we have proved by saying that $\homfh^k \to
   \homf_{\Chstfrak(\B)}(\bfX,\bfY)^{k}$ is a surjective map of
   $K$-modules
   whose kernel is $\homfone^k+d\homfone^{k-1}$. Therefore,
   we have an induced isomorphism
      $$\mcH^0(\homfh,\homfone)[-1]^k=\homfh^k/(\homfone^k+d\homfone^{k-1})
       \risom \homf_{\Chstfrak(\B)}(\bfX,\bfY)^{k}.$$
   This is exactly what we wanted to prove.
\end{proof}

\begin{rem}
The DG equivalence of the previous lemma is a DG equivalence in a
strong sense: it induces {\em isomorphisms} on hom complexes.
\end{rem}

It is easy to see that the functor $\Tot$ can also be enriched to a
DG equivalence $\Totfrak \: \Chstfrak(\B) \to
\Chfrak(\A,\mcT,\mcF)$, and that $\Totfrak$ and $\mfG$ are inverse
to each other.

Of course, we expect that applying $Z_0$ to the above enrichments
give us back the old categories.

\begin{prop}
  We have equivalences 
$$
\begin{array}{rcl}
     Z_0\mathfrak{Ch}(\B)&\cong&\Ch(\B),\\
     Z_0\mathfrak{Ch}^{st}(\B)&\cong&\Chst(\B),\\
     Z_0\Chfrak(\A,\mcT,\mcF)&\cong&\Ch(\A,\mcT,\mcF),\\
     H_0\Chfrak(\A,\mcT,\mcF)&\cong&\sfK(\A,\mcT,\mcF).
\end{array}$$
(See $\S$\ref{SS:homological} for notation.)
\end{prop}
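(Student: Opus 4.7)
The first two equivalences are direct unwindings of the DG structures introduced in $\S$\ref{SS:DGChst}. By construction, the degree $0$ part of $\homf_{\Chfrak(\B)}(\bfX,\bfY)$ consists of arbitrary sequences $\{h_n \colon \,^n\bbX \to \,^n\bbY\}_n$ in $\B$, with differential $h \mapsto \delta h - h\delta$; its kernel is exactly the set of chain maps, i.e.\ $\Hom_{\Ch(\B)}(\bfX,\bfY)$. The same argument applied to $\Chstfrak(\B)$, where each $h_n$ is moreover required to be strict and to satisfy the fillable-square condition of $\S$\ref{SS:DGChst}, recovers $\Hom_{\Chst(\B)}(\bfX,\bfY)$. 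In both cases this is a pure verification; no torsion-theoretic input is needed.

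The third equivalence is obtained by transport along the DG equivalence $\mfG$ of Proposition \ref{P:DGeq}. By the remark following its proof, $\mfG$ induces \emph{isomorphisms} of hom complexes, so $Z_0$ passes through $\mfG$ and yields a chain of isomorphisms
$$Z_0\Chfrak(\A,\mcT,\mcF) \cong Z_0\Chstfrak(\B) \cong \Chst(\B) \cong \Ch(\A,\mcT,\mcF),$$
where the middle isomorphism is the second equivalence above and the last is Theorem \ref{T:main}. The only check remaining is that the composite really is the identity on objects and matches morphisms as described in $\S$\ref{SS:morphisms}; this follows at once from the explicit formula $\mfG(\gamma)_n = \bigl((-1)^k\underline{g}_n, \overline{g}_{n+1}\bigr)$ evaluated at $k=0$.

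The fourth equivalence is obtained in exactly the same way, but applying $H_0$: we get $H_0\Chfrak(\A,\mcT,\mcF) \cong H_0\Chstfrak(\B)$, reducing the task to identifying the right-hand side with $\sfK(\A,\mcT,\mcF)$. Using the third equivalence for objects and closed morphisms, the question becomes whether boundaries in $\homf_{\Chstfrak(\B)}(\bfX,\bfY)^0$, pulled back through $\mfG$, coincide with the decorated chain homotopies defined via the cylinder in $\S$\ref{SS:homological}. The analysis of the kernel of $\mfG$ in the proof of Proposition \ref{P:DGeq} produces, for each such boundary, a sequence $s_n \colon E^n \to F^{n-1}$ satisfying $s_n(M^n) = 0$ and $s_n(E^n) \subseteq N^{n-1}$; this is exactly the data of a morphism $\Cyl(\id_{(\Eb,\Mb)}) \to (\Fb,\Nb)$ restricting to the two given morphisms on the boundary copies of $(\Eb,\Mb)$ and agreeing on the middle $E^{\bullet+1}$-summand with $s$.

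The main (in fact only) obstacle is this last identification: matching the algebraic boundary formulas coming out of $\mfG$ with the cylinder-based decorated chain homotopies requires a careful bookkeeping of how the direct-sum decoration of $\Cyl(\id_{(\Eb,\Mb)}) = \Eb \oplus E^{\bullet+1} \oplus \Eb$ with decoration $\Mb \oplus \Mb^{\bullet+1} \oplus \Mb$ interacts with the target decoration $\Nb$. Once this bookkeeping is done, the identification is forced, and the whole proposition follows.
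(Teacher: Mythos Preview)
The paper states this proposition without proof, treating it as a routine sanity check once the DG structures have been set up; so there is no argument to compare against, only the correctness of yours to assess.

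Your strategy is sound for the first three equivalences, and transporting along the DG isomorphism $\mfG$ of Proposition~\ref{P:DGeq} is the natural move for the third and fourth. In the fourth, however, you misidentify the conditions on the homotopy $s_n$. You write that the analysis of $\ker\mfG$ produces sequences with $s_n(M^n)=0$ and $s_n(E^n)\subseteq N^{n-1}$; these are the conditions defining $\homfone^{-1}$, which is (part of) the \emph{kernel} of $\mfG$ in degree $-1$, not the space that parametrizes boundaries. A boundary in $\homf_{\Chstfrak(\B)}^0$ is $d(h)$ for $h\in\homf_{\Chstfrak(\B)}^{-1}$, and under $\mfG$ such $h$ lifts to a class $[\sigma]$ with $\sigma\in\homfh^{-1}$: the relevant condition is only $s_n(M^n)\subseteq N^{n-1}$ (together with condition~($\mathbf 2$) defining $\homfh$, which is automatic once $f-g=d\sigma$). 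One then uses Proposition~\ref{P:enrich} to eliminate the stray $\homfone^0$ term and conclude that two chain maps $f,g$ agree in $H_0$ if and only if $f-g=d\sigma$ for some $\sigma\in\homfh^{-1}$.

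With this correction the ``bookkeeping'' you flag becomes trivial: a morphism $\Cyl(\id_{(\Eb,\Mb)})\to(\Fb,\Nb)$ restricting to $f$ and $g$ on the ends is precisely a sequence $s_n\colon E^n\to F^{n-1}$ satisfying $s_n(M^n)\subseteq N^{n-1}$ and $f_n-g_n=\delta_F s_n+s_{n+1}\delta_E$, since the decoration on the middle summand of the cylinder is $M^{\bullet+1}$ and must land in $\Nb$. With your stated (stronger) conditions the match would fail: the cylinder only forces $s_n(M^n)\subseteq N^{n-1}$, not $s_n(M^n)=0$, and does not force $s_n(E^n)\subseteq N^{n-1}$ at all.
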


\subsection{Semi-injective and semi-projective objects in
$\B$}{\label{SS:semi}} Before we prove the derived equivalence of
$\Chfrak(\B)$ and $\Chfrak(\A,\mcT,\mcF)$ we need some definitions.

We say that an object $\bbX=[X^{-1} \to X^0]$  in $\B$ is {\em
semi-injective} (respectively, {\em semi-projective}), if $X^{-1}$
an injective object in $\A$ (respectively, if $X^0$ a projective
object in $\A$). Note that these notions are {\em not} invariant
under isomorphism.

\begin{prop}{\label{P:injproj}}
  Let $\bfX$ and $\bfY$ be complexes in $\Ch(\B)$. Assume
  either $\bfX$ is a complex of semi-projective objects, or $\bfY$
  is a complex of semi-injective objects. Then
  $$\homf_{\Chstfrak}(\bfX,\bfY)\hookrightarrow
                                          \homf_{\Chfrak}(\bfX,\bfY) $$
is an isomorphism. (We do not need any boundedness conditions on
$\bfX$ or $\bfY$).
\end{prop}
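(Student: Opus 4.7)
The plan is to reduce the statement to Lemma \ref{L:split} plus a splitting/lifting argument. First observe that the inclusion $\homf_{\Chstfrak}(\bfX,\bfY) \hookrightarrow \homf_{\Chfrak}(\bfX,\bfY)$ is injective by construction: elements of either complex are sequences of morphisms $h_n \: {}^n\bbX \to {}^{n+k}\bbY$ in $\B$, and $\homf_{\Chstfrak}(\bfX,\bfY)^k$ is cut out by the two conditions of $\S\ref{SS:DGChst}$. So the task is to show that, under either hypothesis, every degree-$k$ element of $\homf_{\Chfrak}(\bfX,\bfY)$ automatically satisfies both conditions. Strictness of each $h_n$ is immediate from Lemma \ref{L:split}: semi-projectivity of $\bfX$ makes each ${}^n X^0$ projective so every morphism out of ${}^n\bbX$ is strict; dually for $\bfY$ semi-injective.

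For the dotted arrow condition I would work with the NE-SW sequences of the butterflies representing $d \: {}^{n-1}\bbX \to {}^n\bbX$ and $d \: {}^{n+k-1}\bbY \to {}^{n+k}\bbY$, which are short exact sequences
\begin{equation*}
0 \to {}^n X^{-1} \torel{\iota_X} E^n \torel{\sigma_X} {}^{n-1}X^0 \to 0, \qquad 0 \to {}^{n+k}Y^{-1} \torel{\iota_Y} F^{n+k} \torel{\sigma_Y} {}^{n+k-1}Y^0 \to 0
\end{equation*}
in $\A$. In the semi-projective case, projectivity of ${}^{n-1}X^0$ both splits the first sequence as $E^n \cong {}^n X^{-1} \oplus {}^{n-1}X^0$ and lifts $h_{n-1}^0$ through $\sigma_Y$ to some $s \: {}^{n-1}X^0 \to F^{n+k}$; the dotted arrow is then the map $E^n \to F^{n+k}$ equal to $\iota_Y \circ h_n^{-1}$ on the first summand and to $s$ on the second. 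In the semi-injective case, injectivity of ${}^{n+k}Y^{-1}$ splits the second sequence as $F^{n+k} \cong {}^{n+k}Y^{-1} \oplus {}^{n+k-1}Y^0$ and extends $h_n^{-1}$ along $\iota_X$ to some $t \: E^n \to {}^{n+k}Y^{-1}$; the dotted arrow then has components $t$ and $h_{n-1}^0 \circ \sigma_X$ under the splitting.

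The only remaining check is that the two squares of the diagram in $\S\ref{SS:DGChst}$ commute. In both cases this is immediate from the construction: one square commutes by how the dotted arrow was defined on the subobject ${}^n X^{-1}$ (respectively, how it lands in ${}^{n+k}Y^{-1}$), and the other by how it was defined on the quotient (using $\sigma_X \circ \iota_X = 0$ in the semi-projective case, and dually in the semi-injective one). I do not anticipate any genuine obstacle here; the argument is essentially the DG-level upgrade of Lemma \ref{L:split}, with the required splittings and lifts supplied precisely by the hypotheses of semi-projectivity or semi-injectivity.
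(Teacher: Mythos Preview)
Your proposal is correct and takes essentially the same approach as the paper: the paper's entire proof is the single sentence ``This follows from Lemma \ref{L:split},'' and you have correctly unpacked what that entails, including the separate verification of the dotted-arrow condition (via splitting one of the NE--SW extensions and lifting or extending along the other) which the paper leaves implicit. Your indexing with $F^{n+k}$ rather than $F^n$ is in fact the correct one for a degree-$k$ element; the paper's diagram in \S\ref{SS:DGChst} suppresses the shift.
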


\begin{proof}
  This follows from Lemma \ref{L:split}.
\end{proof}

\begin{lem}{\label{L:enough}}
  If $\A$ has enough injectives, then for every object
  $\bbX$ in $\B$ there exists a semi-injective object $\mathbb{I}$
  and  a strict isomorphism $\bbX \to \mathbb{I}$.
  If $\A$ has enough projectives, then for every object
  $\bbX$ in $\B$ there exists a semi-projective object $\mathbb{P}$
  and  a strict isomorphism $ \mathbb{P} \to \bbX$.
\end{lem}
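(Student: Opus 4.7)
The plan is to build $\mathbb{I}$ (respectively $\mathbb{P}$) by a pushout (respectively pullback) construction, and then use the characterization of strict isomorphisms from Corollary \ref{C:isom}, which says that a strict morphism $(f^{-1},f^0) \: \bbX\to\bbY$ is an isomorphism if and only if the NW-SE sequence
$$0 \to X^{-1} \xrightarrow{(d_X,-f^{-1})} X^0\oplus Y^{-1} \xrightarrow{f^0+d_Y} Y^0 \to 0$$
is short exact.

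For the injective case, write $\bbX=[X^{-1}\torel{d}X^0]$ and pick any monomorphism $j \: X^{-1}\hookrightarrow I^{-1}$ with $I^{-1}$ injective. Take $I^0$ to be the pushout of $X^0\xleftarrow{d} X^{-1}\xrightarrow{j} I^{-1}$, with structural maps $d_I \: I^{-1}\to I^0$ and $f^0 \: X^0\to I^0$, and set $f^{-1}:=j$. By the defining property of a pushout in an abelian category, the sequence
$$0 \to X^{-1} \xrightarrow{(d,-j)} X^0\oplus I^{-1} \xrightarrow{f^0+d_I} I^0 \to 0$$
is short exact (injectivity of $(d,-j)$ comes from the injectivity of $j$; the rest is the pushout description). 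Hence by Corollary \ref{C:isom} the strict morphism $(f^{-1},f^0) \: \bbX\to\mathbb{I}$ is an isomorphism, provided $\mathbb{I}:=[I^{-1}\torel{d_I}I^0]$ actually lies in $\B$.

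To verify $\mathbb{I}\in\B$, a direct computation with the pushout presentation $I^0=(X^0\oplus I^{-1})/\{(d(c),-j(c)):c\in X^{-1}\}$ gives $\ker d_I = j(\ker d) \cong \ker d \in \mcF$ (again using that $j$ is mono) and $\coker d_I \cong \coker d \in \mcT$ (since quotienting further by the image of $d_I$ collapses the $I^{-1}$ factor). So $\mathbb{I}$ lies in $\B$ and is semi-injective by construction. The projective case is dual: pick an epimorphism $\pi \: P^0\twoheadrightarrow X^0$ with $P^0$ projective, take $P^{-1}:=P^0\times_{X^0}X^{-1}$ as the pullback, let $d_P$ and $f^{-1}$ be the two projections, and put $f^0:=\pi$. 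The same two verifications---using that $\pi$ is epi in place of $j$ being mono---show that $\mathbb{P}:=[P^{-1}\torel{d_P}P^0]$ lies in $\B$ and that the NW-SE sequence is short exact, so $\mathbb{P}\to\bbX$ is a strict isomorphism.

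There is no real obstacle; the only mildly delicate points are the two torsion-pair verifications $\ker d_I\in\mcF$ and $\coker d_I\in\mcT$ (and their duals), which follow by direct inspection of the pushout/pullback presentation combined with the hypothesis that $\bbX\in\B$. Everything else is formal from Corollary \ref{C:isom}.
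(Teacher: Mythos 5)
Your construction is exactly the one in the paper: in the injective case the paper sets $\mathbb{I}=[I \to X^0\oplus^{X^{-1}}I]$ for a monomorphism $X^{-1}\hookrightarrow I$ (a pushout), and dually $\mathbb{P}=[P\oplus_{X^0}X^{-1}\to P]$ for an epimorphism $P\twoheadrightarrow X^0$ (a pullback). You have simply spelled out the two verifications the paper leaves implicit, namely that the NW-SE sequence of the resulting strict morphism is short exact (so Corollary \ref{C:isom} applies) and that $\ker d_I\cong\ker d\in\mcF$, $\coker d_I\cong\coker d\in\mcT$ so that $\mathbb{I}$ indeed lies in $\B$; both checks are correct.
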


\begin{proof}
  Assume $\A$ has enough injectives. Take a monomorphism
  $X^{-1} \hookrightarrow I$ into an injective $I$,
  an set $\mathbb{I}=[I \to X^0\oux{}{X^{-1}}I]$.
  Similarly, if $\A$ has enough projectives,
  take an epimorphism $P \twoheadrightarrow X^0$ from a
  projective $P$, and set $\mathbb{P}=[P\oux{X^0}{}X^{-1} \to P]$.
\end{proof}

\begin{cor}{\label{C:resolutions}}
 Let $\bfX$ be a complex in $\B$. If $\A$ has enough injectives,
 then there is a complex $\mathbf{I}$ of semi-injective objects in
 $\B$ and a strict isomorphism $\bfX \to \mathbf{I}$.
 If $\A$ has enough projectives,
 then there is a complex $\mathbf{P}$ of semi-projective objects in
 $\B$ and a strict isomorphism $\mathbf{P} \to\bfX$. (No boundedness
 conditions needed.)
\end{cor}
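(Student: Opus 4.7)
The plan is to construct $\mathbf{I}$ (respectively $\mathbf{P}$) degree by degree using Lemma \ref{L:enough}, and then \emph{transport} the differentials of $\bfX$ along the resulting strict isomorphisms. Specifically, in the injective case, for each integer $n$ apply Lemma \ref{L:enough} to the object $\,^n\bbX$ to obtain a semi-injective object $\,^n\mathbb{I}$ together with a strict isomorphism $s_n \: \,^n\bbX \to \,^n\mathbb{I}$ in $\B$. Then define the differentials of $\mathbf{I}$ by
$$d_{\mathbf{I}}^n := s_{n+1}\circ d_{\bfX}^n \circ s_n^{-1} \: \,^n\mathbb{I} \to \,^{n+1}\mathbb{I},$$
where the composition is taken in $\B$ (so $s_n^{-1}$ is a morphism in $\B$, not necessarily a strict one).

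This defines a complex in $\Ch(\B)$ because $d_{\mathbf{I}}^{n+1}\circ d_{\mathbf{I}}^n = s_{n+2}\circ d_{\bfX}^{n+1}\circ d_{\bfX}^n \circ s_n^{-1} = 0$. By construction the family $\{s_n\}$ satisfies $d_{\mathbf{I}}^n\circ s_n = s_{n+1}\circ d_{\bfX}^n$, so it assembles into a morphism of complexes $s \: \bfX \to \mathbf{I}$ in $\Ch(\B)$. Since each $s_n$ is strict, $s$ lies in $\Chst(\B)$; since each $s_n$ is an isomorphism in $\B$, $s$ is an isomorphism in $\Ch(\B)$. Hence $s\in\sis$, which is what is meant by a strict isomorphism.

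The projective case is entirely dual: apply the second half of Lemma \ref{L:enough} to get strict isomorphisms $t_n\: \,^n\mathbb{P}\to\,^n\bbX$ with $\,^n\mathbb{P}$ semi-projective, and set $d_{\mathbf{P}}^n:=t_{n+1}^{-1}\circ d_{\bfX}^n\circ t_n$. The assembled map $t\: \mathbf{P}\to\bfX$ is then in $\sis$.

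There is essentially no obstacle; once Lemma \ref{L:enough} is in hand the construction is automatic. The conceptual point worth flagging is that $\mathbf{I}$ (respectively $\mathbf{P}$) is \emph{not} asserted to live in $\Chst(\B)$: the transported differentials $d_{\mathbf{I}}^n$ need not be strict, because $s_n^{-1}$ is generally not strict (compare the discussion after Lemma \ref{L:roof1}). This is also why no boundedness hypothesis is needed on $\bfX$: we are not producing an injective resolution in any derived sense, only replacing $\bfX$, up to strict isomorphism, by a complex all of whose terms are semi-injective.
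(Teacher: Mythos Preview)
Your argument is correct and is exactly the intended derivation from Lemma \ref{L:enough}; the paper states the corollary without proof because this construction is automatic.

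One small correction to your closing commentary (which does not affect the proof): the transported differentials $d_{\mathbf{I}}^n$ are in fact automatically strict, since their target $\,^{n+1}\mathbb{I}$ is semi-injective and Lemma \ref{L:split} says every morphism into such an object is strict (dually for $\mathbf{P}$). Also, the objects of $\Chst(\B)$ are by definition the same as those of $\Ch(\B)$; the ``st'' refers only to the morphisms. So $\mathbf{I}$ certainly lives in $\Chst(\B)$ as an object, and the point of the corollary is precisely that the map $\bfX \to \mathbf{I}$ is a morphism there lying in $\sis$.
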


\subsection{Derived equivalence of  $\Chfrak^b(\B)$ and
$\Chfrak^b(\A,\mcT,\mcF)$  }{\label{SS:derivedeq1}}

In this subsection we will need   $\A$ to have  either enough
injectives or enough projectives. Let us say that $\A$ has enough
projectives. For $\bfX,\bfY \in \Chstfrak(\B)$ we define
   $$\RHom_{\Chstfrak(\B)}(\bfX,\bfY):=
                  \homf_{\Chstfrak(\B)}(\mathbf{P},\bfY)$$
where $\mathbf{P} \to \bfX$ is a semi-projective resolution as in
Corollary \ref{C:resolutions}. If $\mathbf{P}'$ is another
semi-projective resolution for $\bfX$, it follows from Lemma
\ref{L:split} that there is a canonical strict isomorphism
$\mathbf{P}'\risom \mathbf{P}$ over $\bfX$, with a strict inverse
$\mathbf{P}\risom \mathbf{P}'$. Therefore,
$\RHom_{\Chstfrak(\B)}(\bfX,\bfY)$ is well-defined up to a canonical
{\em isomorphism}.

Proposition \ref{P:injproj} implies that there is an isomorphism of
$K$-complexes
  $$\RHom_{\Chstfrak(\B)}(\bfX,\bfY) \cong
  \homf_{\Chfrak(\B)}(\bfX,\bfY).$$
In other words, the inclusion $\Chstfrak(\B) \hookrightarrow
\Chfrak(\B)$ is a ``derived'' equivalence; here ``derived'' refers
to the localizing class $\sis$ and not $\qis$. By Proposition
\ref{P:DGeq}, we find that $\mfG \: \Chfrak(\A,\mcT,\mcF) \to
\Chfrak(\B)$ is a ``derived'' equivalence. Similar discussion is
valid in the case where $\A$ has enough injectives. We summarize
this in the following proposition.

\begin{prop}{\label{P:dereq}}
  Assume that $\A$ has either enough injectives or enough
 projectives. Then, we have a natural ``derived'' equivalence
     $$\Chfrak(\B)\cong\Chfrak(\A,\mcT,\mcF)$$
 of DG categories.
(Here, ``derived'' refers to the localizing class $\sis$.)
\end{prop}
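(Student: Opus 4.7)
The plan is to reduce the statement to the two substantive inputs that are already in place: the honest DG equivalence $\mfG:\Chfrak(\A,\mcT,\mcF)\risom\Chstfrak(\B)$ of Proposition~\ref{P:DGeq}, and Proposition~\ref{P:injproj}, which says that when the source is a complex of semi-projective objects or the target is a complex of semi-injectives, the inclusion $\homf_{\Chstfrak(\B)}\hookrightarrow\homf_{\Chfrak(\B)}$ is already an equality of $K$-complexes. Once these are combined with the existence of semi-projective (resp.\ semi-injective) strict resolutions coming from Corollary~\ref{C:resolutions}, the proposition essentially writes itself; the narrative in $\S$\ref{SS:derivedeq1} can be taken as a blueprint.

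Assume first that $\A$ has enough projectives. For $\bfX\in\Chstfrak(\B)$ I would pick a semi-projective resolution $\mathbf{P}\to\bfX$ using Corollary~\ref{C:resolutions}, so that the structure map lies in $\sis$, and define
$$\RHom_{\Chstfrak(\B)}(\bfX,\bfY):=\homf_{\Chstfrak(\B)}(\mathbf{P},\bfY).$$
Lemma~\ref{L:split} guarantees that any two such resolutions are related by a strict isomorphism with strict inverse, so this is well-defined up to canonical isomorphism; combined with the evident functoriality in $\bfY$ and the usual tricks with fibered resolutions, one gets functoriality in $\bfX$ as well. Proposition~\ref{P:injproj} then identifies the right-hand side with $\homf_{\Chfrak(\B)}(\mathbf{P},\bfY)$. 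Since $\mathbf{P}\to\bfX$ is a strict isomorphism in $\Ch(\B)$, hence an honest isomorphism of objects in $\Chfrak(\B)$, the latter is canonically isomorphic to $\homf_{\Chfrak(\B)}(\bfX,\bfY)$. Stringing the three isomorphisms together gives a natural DG quasi-equivalence between the $\sis$-localization of $\Chstfrak(\B)$ and $\Chfrak(\B)$; essential surjectivity is immediate from the inclusion, and composition-compatibility is inherited from that of the DG categories involved. The case of enough injectives is dual, with $\bbX\mapsto\mathbb{I}$ and the targets rather than the sources being replaced.

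Pre-composing with $\mfG$ of Proposition~\ref{P:DGeq} then yields the desired derived DG equivalence $\Chfrak(\A,\mcT,\mcF)\cong\Chfrak(\B)$ with respect to $\sis$.

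The main obstacle, which is more bookkeeping than ideas, is justifying the $\sis$-localization cleanly as a DG construction: one must check that the assignment $\bfX\mapsto\mathbf{P}$ can be made sufficiently functorial (up to canonical strict isomorphism) that the identification on hom complexes respects composition, and that every element of $\sis$ really becomes invertible in the DG category whose hom complexes are $\homf_{\Chfrak(\B)}$. Both points reduce to Lemma~\ref{L:split}, together with the observation that $\sis$ consists by definition of morphisms that are already isomorphisms in $\Ch(\B)$; no higher-homotopy obstruction appears because Proposition~\ref{P:injproj} turns the relevant inclusions of hom complexes into equalities on the nose.
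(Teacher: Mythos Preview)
Your proposal is correct and follows essentially the same route as the paper: you invoke Corollary~\ref{C:resolutions} for semi-projective (resp.\ semi-injective) resolutions, use Lemma~\ref{L:split} for their uniqueness up to strict isomorphism, apply Proposition~\ref{P:injproj} to identify $\homf_{\Chstfrak(\B)}(\mathbf{P},\bfY)$ with $\homf_{\Chfrak(\B)}(\mathbf{P},\bfY)\cong\homf_{\Chfrak(\B)}(\bfX,\bfY)$, and then compose with the DG equivalence $\mfG$ of Proposition~\ref{P:DGeq}. Your additional remarks on the bookkeeping of functoriality and composition-compatibility are reasonable caveats that the paper leaves implicit, but the underlying argument is the same.
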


From this we deduce the DG version of Theorem \ref{T:derived}.

\begin{thm}{\label{T:derequiv}}
  Assume that $\A$ has either enough injectives or enough
  projectives. Assume further that $\B$ has enough injectives
  (respectively, enough projectives). Let $*=+,b$ (respectively,
  $*=-,b$.) Then, we have a derived equivalence
  $$\Chfrak^*(\B)\cong\Chfrak^*(\A,\mcT,\mcF)$$
  of DG categories.
  (Here, ``derived'' refers to either of the two localizing class $\sis$ or $\qis$.)
\end{thm}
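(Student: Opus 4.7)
The plan is to upgrade the $\sis$-derived DG equivalence of Proposition \ref{P:dereq} to a $\qis$-derived DG equivalence by introducing classical resolutions in $\Ch^*(\B)$. Since $\sis\subseteq\qis$ by Proposition \ref{P:sis}, the underlying triangulated $\qis$-localizations are already equivalent by Theorem \ref{T:derived}; what remains is to match DG hom complexes in the $\qis$-derived sense, for which injective or projective resolutions in $\B$ are needed.

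Consider the case where $\A$ and $\B$ both have enough injectives and $*\in\{+,b\}$. For $\bfX,\bfY\in\Ch^*(\B)$, the enough-injectives hypothesis on $\B$ yields a quasi-isomorphism $\bfY\to\mathbf{I}$ with $\mathbf{I}$ a bounded-below complex of $\B$-injectives, and the complex $\homf_{\Chfrak(\B)}(\bfX,\mathbf{I})$ then computes $\RHom_{\mcD^*(\B)}(\bfX,\bfY)$ by standard arguments. Using enough injectives in $\A$, Corollary \ref{C:resolutions} produces a complex $\mathbf{J}$ of semi-injective objects (in the sense of $\S$\ref{SS:semi}) together with a strict isomorphism $\mathbf{I}\risom\mathbf{J}$ in $\Ch(\B)$; since this is term-by-term an isomorphism in $\B$, each term of $\mathbf{J}$ is still $\B$-injective, so $\mathbf{J}$ equally computes $\RHom$. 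Proposition \ref{P:injproj} then gives an isomorphism
$$
\homf_{\Chstfrak(\B)}(\bfX,\mathbf{J})\risom\homf_{\Chfrak(\B)}(\bfX,\mathbf{J}),
$$
so the strict hom complex computes the same DG $\RHom$.

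Transporting through $\Totfrak\colon\Chstfrak(\B)\risom\Chfrak(\A,\mcT,\mcF)$ of Proposition \ref{P:DGeq}, which induces an \emph{isomorphism} on hom complexes, we obtain
$$
\homf_{\Chfrak(\A,\mcT,\mcF)}\bigl(\Totfrak(\bfX),\Totfrak(\mathbf{J})\bigr)\cong\RHom_{\mcD^*(\B)}(\bfX,\bfY),
$$
while $\Totfrak(\bfY)\to\Totfrak(\mathbf{J})$ is a quasi-isomorphism by Corollary \ref{C:multsys}. Hence $\Totfrak$ induces quasi-isomorphisms on all DG $\RHom$ complexes, establishing the desired DG equivalence of $\qis$-localizations. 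The dual case (enough projectives in $\A$ and $\B$, $*\in\{-,b\}$) is handled symmetrically using semi-projective resolutions via the other half of Corollary \ref{C:resolutions}. The main obstacle lies in the compatibility of the two resolution steps: one must check that applying the semi-injective strictification of Corollary \ref{C:resolutions} to a bounded-below complex of $\B$-injectives yields a complex that is simultaneously termwise $\B$-injective, termwise semi-injective, and still bounded-below; the $\B$-injectivity preservation follows from the fact that a strict isomorphism in $\Ch(\B)$ is a termwise isomorphism in $\B$, and boundedness is automatic from the explicit formula in Lemma \ref{L:enough}.
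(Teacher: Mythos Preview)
Your approach matches the paper's: the paper does not give a formal proof of Theorem~\ref{T:derequiv} beyond the phrase ``From this we deduce the DG version of Theorem~\ref{T:derived}'' following Proposition~\ref{P:dereq}, together with the explanatory paragraph about computing derived hom-complexes via injective/projective resolutions in $\B$. Your argument is precisely a fleshing-out of that sketch --- resolve in $\B$ to compute $\RHom$, then invoke the $\sis$-level comparison (semi-injective/semi-projective strictification plus Proposition~\ref{P:injproj}) to transport along $\Totfrak$.

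One small point: you explicitly treat only the two ``aligned'' cases (both $\A$ and $\B$ with enough injectives, or both with enough projectives), whereas the hypotheses of the theorem allow the mixed cases, e.g.\ $\A$ with enough projectives while $\B$ has enough injectives. In that situation your step invoking Corollary~\ref{C:resolutions} to produce a semi-\emph{injective} $\mathbf{J}$ is unavailable; instead one applies Corollary~\ref{C:resolutions} on the \emph{other} variable to obtain a semi-projective strict replacement $\mathbf{P}\to\bfX$, and then Proposition~\ref{P:injproj} (with the semi-projective hypothesis on the source) gives $\homf_{\Chstfrak(\B)}(\mathbf{P},\mathbf{I})\cong\homf_{\Chfrak(\B)}(\mathbf{P},\mathbf{I})\cong\homf_{\Chfrak(\B)}(\bfX,\mathbf{I})$. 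This is exactly the content already packaged in Proposition~\ref{P:dereq}, so you could also simply invoke that proposition directly rather than re-running its proof, which would automatically cover all four combinations of hypotheses on $\A$.
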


Some explanation about the meaning of this theorem is perhaps
helpful. First of all, the real interesting case of the theorem is
when the localizing class is $\qis$;  that is what the word
``derived'' is usually associated with.  We know that given $\bfX$
and $\bfY$ in $\Chfrak^*(\B)$, the hom-complex
$\homf_{\Chfrak(\B)}(\bfX,\bfY)$ is not well-behaved with respect to
quasi-isomorphisms. That is why, as in the case of derived
categories, we are more interested in the derived hom-complexes,
namely, the ones obtained by first replacing $\bfX$ and $\bfY$ by an
appropriate  projective or injective resolution, and then taking
$\homf$. What the above theorem is saying is that, the functors
between $\Chfrak^*(\B)$ and $\Chfrak^*(\A,\mcT,\mcF)$ do not
necessarily induce quasi-isomorphisms on the usual hom-complexes
$\homf_{\Chfrak(\B)}(\bfX,\bfY)$, but they do induce
quasi-isomorphisms on the {\em derived} hom-complexes.

\providecommand{\bysame}{\leavevmode\hbox
to3em{\hrulefill}\thinspace}
\providecommand{\MR}{\relax\ifhmode\unskip\space\fi MR }
\providecommand{\MRhref}[2]{%
  \href{http://www.ams.org/mathscinet-getitem?mr=#1}{#2}
} \providecommand{\href}[2]{#2}


\end{document}